\newtheorem{thm}{Theorem}[section]
\newtheorem{cor}[thm]{Corollary}
\newtheorem{lemma}[thm]{Lemma}
\newtheorem{conj}[thm]{Conjecture}
\numberwithin{equation}{section}
\newcommand{\dmax}{d_{\mathrm{max}}}
\newcommand{\dmin}{d_{\mathrm{min}}}
\newcommand{\din}{d_{\mathrm{in}}}
\newcommand{\dout}{d_{\mathrm{out}}}
\newcommand{\dav}{\bar d}
\newcommand{\dgeom}{\hat d}
\newcommand{\dharm}{d_{\mathrm{hm}}}
\newcommand{\st}{\mathrel{:}}
\let\le=\leqslant
\let\ge=\geqslant
\newcommand{\RT}{\operatorname{RT}}
\newcommand{\EO}{\operatorname{EO}}
\newcommand{\NT}{\operatorname{NT}}
\let\originalleft\left
\let\originalright\right
\renewcommand{\left}{\mathopen{}\mathclose\bgroup\originalleft}
\renewcommand{\right}{\aftergroup\egroup\originalright}
\newcommand{\abs}[1]{\lvert#1\rvert} \let\card=\abs
\renewcommand{\dfrac}[2]{\lower0.12ex\hbox{\large$\textstyle\frac{#1}{#2}$}}
\newcommand{\Dfrac}[2]{\raise0.05ex\hbox{\small$\displaystyle\frac{#1}{#2}$}}
\newcommand{\eps}{\varepsilon}
\newcommand{\calC}{\mathcal{C}}
\newcommand{\calG}{\mathcal{G}}
\newcommand{\calP}{\mathcal{P}}
\newcommand{\thetavec}{\boldsymbol{\theta}}
\newcommand{\phivec}{\boldsymbol{\phi}}
\newcommand{\dvec}{\boldsymbol{d}}
\newcommand{\fvec}{\boldsymbol{f}}
\newcommand{\vvec}{\boldsymbol{v}}
\newcommand{\xvec}{\boldsymbol{x}}
\newcommand{\yvec}{\boldsymbol{y}}
\newcommand{\zvec}{\boldsymbol{z}}
\newcommand{\paul}{\widehat\rho}
\newcommand{\eo}{\rho}
\newcommand{\ours}{\rho_\tau}
\newcommand{\rhonew}{\rho_{\mathrm{new}}}
\newcommand{\Integers}{\mathbb{Z}}
\newcommand{\E}{\operatorname{\mathbb{E}}}
\renewcommand{\Pr}{\operatorname{\mathbb{P}}}
\renewcommand{\P}[1]{\Pr\bigl(#1\bigr)}
\newcommand{\cart}{\mathbin{\raise0.15ex\hbox{$\scriptstyle\square$}}}
\def\nicebreak{\vskip 0pt plus 50pt\penalty-300\vskip 0pt plus -50pt }
\begin{document}

\title{Correlation between residual entropy and spanning tree entropy of ice-type models on graphs}
\author{
Mikhail Isaev\thanks{Supported by Australian Research Council grant DP220103074} \\
\small School of Mathematics and Statistics \\[-0.5ex]
\small UNSW Sydney \\[-0.5ex]
\small Sydney NSW 2052, Australia \\[-0.5ex]
\small\tt m.isaev@unsw.edu.au
\and
Brendan D. McKay\thanks{Supported by Australian Research Council grant DP190100977} \\
\small School of Computing \\[-0.5ex]
\small Australian National University  \\[-0.5ex]
\small Canberra ACT 2601, Australia \\[-0.5ex]
\small\tt brendan.mckay@anu.edu.au
\and
Rui-Ray Zhang \\
\small Simons Laufer Mathematical Sciences Institute \\[-0.5ex]
\small Berkeley CA 94720, USA  \\[-0.5ex]
\small\tt ruizhang@msri.org
}

\date{\today}

\maketitle

\begin{abstract}
The logarithm of the number of Eulerian orientations, normalised by the number of vertices, is known as the residual entropy in studies of ice-type models on graphs.
The spanning tree entropy depends similarly on the number of spanning trees.
We demonstrate and investigate a remarkably strong, though
non-deterministic, correlation between these two entropies.
This leads us to propose a new heuristic estimate for the residual entropy of regular graphs
that performs much better than previous heuristics.
We also study the expansion properties and residual entropy of random graphs with given degrees.
\end{abstract}

\section{Introduction}

The graphs in this paper are free of loops but may
have multiple edges. When it is not clear from the context,
we will use ``simple graph'' or ``multigraph'' to 
emphasise that multiple edges are forbidden or allowed.
An \textit{Eulerian orientation} of a graph is an orientation of its edges such that every vertex has equal in-degree and out-degree.

Let  $G$ be a graph with $n$ vertices and positive even degrees, and
let $\EO(G)$ denote the number of Eulerian orientations of~$G$.
We consider the logarithm of the number of Eulerian orientations of $G$ normalised
by the number of vertices: 
\begin{equation}
\eo(G) := \dfrac{1}{n} \log \EO(G).
\label{def:eo}
\end{equation}
If $G$ is an infinite repeating lattice of bounded degree and
$\{G_i\}$ is an increasing sequence of Eulerian graphs which
are locally like $G$ at most vertices, then under weak conditions
$\eo(G_i)$ converges to a limit~$\eo(G)$ that only depends on~$G$.
See~\cite{Bencs} for a precise definition and proof.
The quantity $\eo(G)$ is known as the \textit{residual entropy} of ice-type models in statistical physics.
Determining the asymptotic behaviour of $\eo(G)$ as $n\to\infty$ is a key question in the area, see for example
\cite[Chapter 8]{baxter1969f}
and \cite{lieb1972two}. 
In particular, the value is known for the square lattice~\cite{lieb1967residual},
the triangular lattice~\cite{baxter1969f}, and the hexagonal ice monolayer~\cite{LieHexagonalMonolayer}. 
In addition, approximate values for many other lattice structures have been proposed, some of which we will mention below.

We can safely ignore graphs with isolated vertices.
Given a degree sequence $\dvec=(d_1,\ldots,d_n)$,
define $\dmin$, $\dav$ and $\dmax$ to be
the minimum, average and maximum degrees.
We will also use the geometric mean $\dgeom=(d_1\cdots d_n)^{1/n}$
and the harmonic mean $\dharm = 
n(d_1^{-1}+\cdots+d_n^{-1})^{-1}$.
Note that $\dmin\le \dharm\le\dgeom\le \dav\le\dmax$.

Around 90 years ago, Pauling \cite{pauling1935structure} proposed the best-known heuristic estimate for~$\eo(G)$.
Orient each edge at random. The probability that any one vertex has in-degree equal to out-degree is
$2^{-d}\binom{d}{d/2}$, where $d$ is the degree of the vertex. Assuming heuristically that these events are independent gives an estimate of $\eo(G)$ that we will call the Pauling estimate:
  \begin{equation}\label{def:Pauling}
    \paul(G) :=
  \Dfrac1n\,\sum_{i=1}^n\,\biggl(\log \binom{d_i}{d_i/2} - \Dfrac{d_i}{2} \log 2\biggr). 
  \end{equation}

Lieb and Wu~\cite{lieb1972two}, and later Schrijver \cite{schrijver1983bounds},
showed that, for any multigraph $G$ with even degrees,  
 \begin{equation}\label{eq:Sch}
    \paul(G) \le \eo(G)
    \le \Dfrac{1}{2n}\sum_{i=1}^n\,\log \binom{d_i}{d_i/2}.
 \end{equation}
Comparing these upper and lower bounds, we find that 
\[
\eo(G) = \paul(G)+ O(\log \dgeom).
\]

In this paper we make a number of theoretical and empirical contributions
to the study of residual entropy.
In Section~\ref{s:results} we first survey other known and conjectured
bounds on $\eo(G)$.
For simple graphs we conjecture that in fact $\eo(G)=\paul(G)+o(1)$
if the harmonic mean degree goes to infinity, which will be the case
if the minimum degree goes to infinity.
By Theorem~\ref{T:IMZ}, this is true for sufficiently dense graphs
with good expansion.
Theorem~\ref{thm:newbound} shows that $\eo(G)=\paul(G)+O(1)$
uniformly for all simple graphs.
In Section~\ref{ss:random}, we report that
$\eo(G) = \paul(G)+o(1)$ for two very broad ranges of random
simple graphs with given degrees.
In Section~\ref{ss:heuristic}, we describe an empirical observation,
supported by Theorem~\ref{T:IMZ} and experiment, that $\eo(G)$ is
highly correlated with the number of spanning trees.
In combination with our knowledge of random graphs, this leads
us to propose a new heuristic $\ours(G)$ which is a much better
estimate of $\eo(G)$ than is
$\paul(G)$ for all the graphs we have tested.

Proofs of the theorems in Section~\ref{s:results} are given in
Sections~\ref{s:newbound} and~\ref{S:randomproof}.
In Section~\ref{s:numeric}, we explain how we computed good
estimates of $\eo(G)$ even for graphs with thousands of vertices.
In Section~\ref{s:products} we demonstrate how our new heuristic
provides good estimates for products of cycles, and propose a
value for the residual entropy of the simple cubic lattice.

In Section~\ref{s:cliques} we use the product of a cycle and
a clique as a test case to explore the residual entropy when
the degree grows as a function of the number of vertices.
We find that $\eo(G) = \paul(G)+o(1)$ in that case, but that
$\ours(G)$ is an even better match for $\eo(G)$ with a
significantly smaller error term.

In Section~\ref{s:others}, we show that our heuristic compares
very favourably with the correct values for the triangular
lattice, two types of 3-dimensional ice, and high-dimensional
hypercubes.

\section{Statements of the main results and conjectures}\label{s:results}

For connected regular multigraphs of degree $d\ge 4$, Las Vergnas \cite[Theorem 4]{LasVergnas1990}
obtained a slightly better lower bound than~\eqref{eq:Sch} and,
on condition of connectivity, a significantly better upper bound.
\begin{equation}\label{eq:LVupper}
  \EO(G) \le K^{2/(d-2)}
  \bigl(2^{d/2g}K^{1-d/g(d-2)}\bigr)^{\!n},
\end{equation}
where $K=2^{-d/2}\binom{d}{d/2}$ and $g$ is the girth.
This bound implies that uniformly
\begin{equation}\label{eq:LVsimple}
    \eo(G) \le \paul(G) + \Dfrac{\log d}{2g} + O(n^{-1}).
\end{equation}

Prior to Las Vergnas' work, a much stronger upper bound for the case of simple
graphs was conjectured by Schrijver.
\begin{conj}[\cite{schrijver1983bounds}]\label{Sconj}
 If a simple graph has even degrees $d_1,\ldots,d_n$, then
 \[ 
\EO(G) \le \prod_{i=1}^n \,\RT(d_i+1)^{1/(d_i+1)},
 \]
 where $\RT(d+1)$ is the number of Eulerian orientations
 of the complete graph (i.e., regular tournaments) with $d+1$ vertices.
\end{conj}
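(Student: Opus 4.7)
The conjectured bound has the form $\prod_v C(d_v)$ and is tight when $G$ is a disjoint union of complete graphs $K_{d_i+1}$, placing it in the family of upper bounds for matchings and independent sets in regular graphs (Kahn, Galvin, Zhao, Friedland, \ldots) whose extremisers are disjoint cliques. The first thing I would try is an entropy argument. Let $X$ be a uniformly random Eulerian orientation of $G$ and $X^{(v)}$ its restriction to the edges incident to~$v$. Shearer's lemma applied to the vertex-star cover (each edge is covered exactly twice) gives
\begin{equation*}
\log\EO(G) \,=\, H(X) \,\le\, \tfrac{1}{2} \sum_{v} H\bigl(X^{(v)}\bigr),
\end{equation*}
and the trivial bound $H(X^{(v)})\le\log\binom{d_i}{d_i/2}$ recovers exactly the upper bound in~\eqref{eq:Sch}. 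To prove the conjecture one must instead reduce each per-vertex entropy to $\frac{2}{d_i+1}\log \RT(d_i+1)$; this reduction cannot follow from any purely local argument, because on multigraphs the conjectured bound fails (e.g.\ $d$ parallel edges between two vertices give $\binom{d}{d/2}>\RT(d+1)^{2/(d+1)}$), so the improvement must exploit simplicity in an essential way.

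The natural refinement is to reveal the orientations at $v$ in a carefully chosen order, using the fact that the $d_i$ neighbours of $v$ are distinct and each carries its own balance constraint, and to recover $\RT(d_i+1)$ as the partition function of the induced local ``tournament'' on $v$ together with its neighbours. The analogous step in the matching setting is carried out by entropy on a bipartite double cover, and I would look for a clean analogue here. A plausible parallel route is algebraic: express $\EO(G)$ as the constant term of the Laurent polynomial $\prod_{\{u,v\}\in E}(z_u z_v^{-1}+z_u^{-1}z_v)$ and phrase the conjecture as a capacity inequality, to be attacked by Lorentzian or completely log-concave polynomial machinery in the style of Gurvits's capacity-based proofs of lower bounds on permanents.

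The main obstacle on either route is the same: the Eulerian condition at a vertex is a global parity constraint that couples all incident edges simultaneously, so there is no single-edge deletion or single-vertex conditioning that simultaneously preserves the ``simple graph with even degrees'' class and interacts tractably with the extremal quantity $\RT(d+1)$. I expect essentially all of the work to lie in identifying the right local move, likely on pairs of edges (a switching, or an extraction of a 2-factor), that preserves simplicity and even degrees while inducing a recursion for $\EO(G)$ matching the multiplicative structure of the right-hand side of the conjecture.
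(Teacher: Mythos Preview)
This statement is Conjecture~\ref{Sconj}, and the paper does \emph{not} prove it; the authors say explicitly that they ``don't know how to prove Conjecture~\ref{Sconj}'' and instead offer computational evidence and the weaker Conjecture~\ref{Conj}. So there is no proof in the paper to compare against, and your submission is not a proof either---it is an honest research outline that ends by identifying the obstacle rather than overcoming it.

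As a plan, the pieces you assemble are sound. The Shearer step with the trivial local bound $H(X^{(v)})\le\log\binom{d_v}{d_v/2}$ really does recover the upper bound in~\eqref{eq:Sch}, and your observation that the conjectured bound fails for multigraphs (so simplicity must be used in an essential, non-local way) is exactly the reason no purely local refinement of that Shearer argument can work. The constant-term formulation you mention is the same integral representation the paper uses in the proof of Theorem~\ref{thm:newbound}, so a capacity-style attack is at least starting from the right object. But you have correctly located the gap yourself: you need a local move that respects both simplicity and the even-degree constraint while producing a recursion whose extremiser is a clique, and neither the entropy route nor the capacity route you sketch supplies one. Until that move is found, what you have is a survey of promising frameworks, not a proof.
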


If Conjecture~\ref{Sconj} is correct, it is best possible
for many degree sequences since it is exact for the disjoint union
of complete graphs.
We have computationally confirmed that no other graphs up to 12
vertices achieve the bound, and similarly for 13-vertex graphs with
degrees~4 and~6, 4-regular graphs
up to 19 vertices and 6-regular graphs up to 14 vertices.
From \cite[Theorem 5.1]{IMZ} we know that
\begin{equation}\label{eq:RTvalue}
\RT(d+1) =  d^{1/2} \biggl( \Dfrac{2^{d+2}}{\pi (d+1)}\biggr)^{\!d/2}
e^{-\frac12 + O(d^{-1})}, 
\end{equation}
which enables us to 
calculate (since we disallow isolated vertices) that
Conjecture~\ref{Sconj} would imply that
\[
  \eo(G)\le\paul(G) 
  + \Dfrac1n\sum_{i=1}^n \,\Dfrac{O(1)+\log d_i}{d_i}
  \le \paul(G) + \Dfrac{O(1)+\log\dharm}{\dharm}
\]
for simple graphs, where the second inequality follows by the
concavity of the function $x\log(1/x)$ for $x\ge 0$.

Though we don't know how to prove Conjecture~\ref{Sconj},
our evidence suggests that at least the following
implication is true.

\begin{conj}\label{Conj}
    If $G=G(n)$ is a sequence of simple graphs with even degrees $\dvec=\dvec(n)$,
    such that the harmonic mean degree $\dharm\to\infty$ as $n\to\infty$, then 
    \[
            \eo(G) = \paul(G) +o(1).
    \]
\end{conj}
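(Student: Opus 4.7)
Since the lower bound $\paul(G) \le \eo(G)$ from~\eqref{eq:Sch} is already uniform, the task reduces to proving the matching upper bound $\eo(G) \le \paul(G) + o(1)$ under the hypothesis $\dharm \to \infty$. The cleanest route would be to establish Conjecture~\ref{Sconj}: combining~\eqref{eq:RTvalue} with the concavity of $x \log(1/x)$ on $x \ge 0$, the calculation displayed immediately after that conjecture shows that Schrijver's extremal bound would imply
\[
    \eo(G) \le \paul(G) + \Dfrac{O(1) + \log\dharm}{\dharm},
\]
which is exactly $o(1)$ when $\dharm \to \infty$. I would therefore first attempt Conjecture~\ref{Sconj} directly, viewing $\EO(G)$ through a suitable permanent representation on the vertex-edge incidence structure and adapting a Br\'egman-type row-entropy bound; the extremal case of disjoint unions of complete graphs suggests the correct shape of such an inequality.

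If the clean extremal inequality appears out of reach, I would attack the asymptotic statement directly via the random-orientation model. Writing
\[
    \EO(G) = 2^{\abs{E(G)}}\,\P{\text{every vertex is balanced}},
\]
the conjecture is equivalent to showing that the joint balancing probability is within a factor $e^{o(n)}$ of the product of single-vertex probabilities $\prod_i 2^{-d_i}\binom{d_i}{d_i/2}$. In a simple graph any two distinct vertices share at most one edge, so intuitively the balance events at different vertices should decouple as degrees grow. The plan would be a pairwise correlation bound via a local central limit theorem at each vertex, in the spirit of the complex Gaussian integral techniques used to prove Theorem~\ref{T:IMZ}, but with the spectral expansion hypothesis replaced by a weighted combinatorial argument that sees only the harmonic mean.

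Low-degree vertices would be handled by a splitting step: since $\frac{1}{n}\sum_i d_i^{-1} = 1/\dharm = o(1)$, for a slowly growing threshold $\omega\to\infty$ the set $S$ of vertices of degree below $\omega$ satisfies both $\abs{S}/n \to 0$ and $\sum_{i\in S}\log d_i = o(n)$. One fixes the orientations incident to $S$ (paying at most $O(|S|\omega) = o(n)$ in the log count) and applies the previous step to the high-degree residual subgraph, with Theorem~\ref{thm:newbound} absorbing the $O(1)$-per-vertex cost on $S$ itself, which is $o(n)$ in total.

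The principal obstacle is the decoupling step: without an expansion hypothesis, the balance events at neighbouring vertices need not decorrelate. Theorem~\ref{T:IMZ} achieves decoupling via spectral expansion, but $\dharm\to\infty$ implies no expansion at all, as a dense expander joined to a slowly growing family of thin tendrils can preserve the harmonic mean while destroying the spectral gap. A successful proof must therefore either upgrade Theorem~\ref{T:IMZ} to tolerate local bottlenecks, or bypass expansion altogether via a combinatorial extremal argument for Conjecture~\ref{Sconj}. Both appear genuinely hard, which is consistent with the statement being left here as an open conjecture rather than a theorem.
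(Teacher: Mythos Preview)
This statement is Conjecture~\ref{Conj} in the paper and is explicitly left open; the paper offers no proof, and indeed lists ``Prove Conjecture~\ref{Conj}'' as the first open problem in its concluding remarks. There is therefore nothing to compare your attempt against, and you correctly recognise this in your final paragraph: what you have written is a survey of plausible attack routes, not a proof.

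Your diagnosis of the landscape matches the paper's. The route via Conjecture~\ref{Sconj} is exactly the implication the paper spells out before stating Conjecture~\ref{Conj}, and your identification of the missing expansion hypothesis as the obstruction to pushing Theorem~\ref{T:IMZ} further is on target. One small caution about your splitting step: fixing the orientations of all edges incident to the low-degree set $S$ does not cleanly reduce to an Eulerian-orientation count on $G[V\setminus S]$, because the balance constraints at boundary vertices of $V\setminus S$ acquire nonzero right-hand sides depending on the fixed orientations. The paper's Lemma~\ref{lem:discon} carries out a related but cleaner splitting at the level of connected components (using Corollary~\ref{C:bounded} to control the small pieces), which avoids this boundary issue; you may find that argument a useful template if you pursue the idea further.
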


Recall that $\dmin\le\dharm\le \dgeom\le\dav\le\dmax$. 
The condition $\dharm\to\infty$ is implied by $\dmin\to\infty$,
but the weaker condition $\dgeom\to\infty$ is not sufficient.
For $n$ being an odd multiple of~6, define $G_n$ to
be the disjoint union of $K_{n/2}$ and $n/6$ triangles.
Then the geometric mean degree is $\dgeom\sim\sqrt n$, but
$\EO(G_n)=2^{n/6} \RT(n/2)$, which implies by~\eqref{eq:RTvalue}
that $\eo(G_n)-\paul(G_n)\to \frac16\log 2$.
Note that the harmonic mean degree is $4+o(1)$.

The converse of Conjecture~\ref{Conj} is also not true.
Even $\dmax=O(1)$ is insufficient to imply
$\eo(G)=\paul(G)+\Omega(1)$, as shown by the case of
increasing girth in~\eqref{eq:LVsimple}.
Another observation is that
if $G_1,G_2,\ldots$ is any sequence of graphs such that
$\eo(G_i)=\paul(G_i)+o(1)$, then the same is true for
$G'_1,G'_2,\ldots$, where $G'_i$ is $G_i$ with any
number of edges subdivided, even though the
average degree may approach~2.

It suffices to prove Conjecture~\ref{Conj}
for connected simple graphs, on account of the following
lemma whose proof will appear in Section~\ref{s:newbound}.

\begin{lemma}\label{lem:discon}
  Let $\calC$ be a class of connected simple graphs for which
  Conjecture~\ref{Conj} holds.
  Then the conjecture also holds for graphs whose
  components all lie in~$\calC$.
\end{lemma}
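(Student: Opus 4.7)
The idea is to reduce to the connected case via additivity. Since $\EO(G) = \prod_C \EO(C)$ over the connected components $C$ of $G$ and $\paul$ is defined as an average of per-vertex contributions in~\eqref{def:Pauling},
\[
n \bigl(\eo(G) - \paul(G)\bigr) = \sum_{C} |C| \bigl(\eo(C) - \paul(C)\bigr),
\]
and each summand is non-negative by~\eqref{eq:Sch}. Thus the task is to show that the right-hand side is $o(n)$.

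Given $\eps > 0$, the assumption that Conjecture~\ref{Conj} holds for $\calC$ yields (in its equivalent uniform form) a threshold $M = M(\eps)$ such that every connected $C \in \calC$ with $\dharm(C) \ge M$ satisfies $\eo(C) - \paul(C) < \eps/2$. Partition the components of $G = G(n)$ according to whether $\dharm(C) \ge M$ or $\dharm(C) < M$, and let $V_{\mathrm{s}}$ denote the vertex set of the small-mean group. The first group contributes at most $(\eps/2)n$ to the sum above. For the second group, by definition of $\dharm$ we have $\sum_{v \in C} d_v^{-1} > |C|/M$ for each such $C$; summing and comparing with $\sum_{v} d_v^{-1} = n/\dharm(G)$ gives $|V_{\mathrm{s}}| \le nM/\dharm(G)$, which is $o(n)$ since $\dharm(G) \to \infty$.

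To finish, by Theorem~\ref{thm:newbound} there is a universal constant $K$ with $\eo(C) - \paul(C) \le K$ for every connected simple graph $C$ with even degrees. Hence
\[
\sum_{C : \dharm(C) < M} |C| \bigl(\eo(C) - \paul(C)\bigr) \le K \, |V_{\mathrm{s}}| = o(n),
\]
and combined with the bound for the large-mean components this yields $\eo(G) - \paul(G) < \eps/2 + o(1) < \eps$ for $n$ large. Since $\eps > 0$ was arbitrary, the lemma follows. The main step is the appeal to Theorem~\ref{thm:newbound}: the uniform $O(1)$ bound it provides reduces the handling of the ``small-mean'' components to the vertex-counting argument above, which is where the assumption $\dharm(G) \to \infty$ is essential.
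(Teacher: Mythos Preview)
Your proof is correct and follows essentially the same route as the paper's: split the components by whether their harmonic mean exceeds a threshold, use the harmonic-mean identity to show the low-$\dharm$ components occupy $o(n)$ vertices, and absorb them via the uniform bound (note that the constant $K$ you need is the content of Corollary~\ref{C:bounded}, not Theorem~\ref{thm:newbound} alone, since the latter still involves $t(G)$). The only cosmetic difference is that the paper takes the threshold to be $\dharm(G)^{1/2}$ so that both contributions vanish simultaneously, whereas you fix $M=M(\eps)$ and run a standard $\eps$--$M$ argument.
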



\subsection{Sufficiently growing degrees and good expansion}

A recent result of the present authors shows that Conjecture~\ref{Conj} holds
for simple graphs with good expansion properties and sufficiently high degrees. Recall that the \textit{isoperimetric number} (also known as the Cheeger constant) of a graph $G$ is defined by
\begin{equation}
	h(G):=\min
	\biggl\{ \Dfrac{|\partial_G \,U|}{|U|}  \st  U\subset V(G), 1\le |U|\le\dfrac12 |V(G)| \biggr\},
 \label{cheeger}
\end{equation}
where $\partial_G \,U$ is the set of edges of $G$ with one end in $U$ 
and the other end in $V(G) \setminus U$.
Note that $h(G)\le \dmin(G)$. 

\begin{thm}[Isaev, McKay, Zhang \cite{IMZ}] \label{T:IMZ}
Let $G = G(n)$ be a simple graph with $n$ vertices and
even degrees.
Assume that  
 $\dmax \gg \log^{8} n$ and   $h(G) \ge \gamma \dmax$ for some constant $\gamma > 0$. Then,  
    \[
        \EO(G) = \Dfrac{ 2^{ |E(G)| } }{ \sqrt{ t(G) } }
\,\biggl( \Dfrac{2}{\pi} \biggr)^{(n-1)/2} \!\!
\exp\biggl( -\dfrac14 \sum_{ jk \in G } 
\biggl( \Dfrac{1}{d_{j}} + \Dfrac{1}{d_{k}} \biggr)^{\!2\,} + O\biggl(\Dfrac{n}{\dmax^{2}} \log\Dfrac{2n}{\dmax} \biggr)\biggr),
    \]
    where 
$t(G)$ is the number of spanning trees of $G$.
\end{thm}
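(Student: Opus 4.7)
The plan is to apply a saddle-point / Laplace analysis to a Fourier integral representation of $\EO(G)$, in the style of the authors' earlier work on Eulerian circuits and on random graphs with prescribed degrees. Encoding each in-degree constraint by $[k=0]=\int_0^1 e^{2\pi i k\theta}\,d\theta$ and summing over all $2^{|E(G)|}$ orientations yields, after collecting the factors $e^{\pm\pi i\theta_v}$ so that each edge contributes a cosine,
\[
    \EO(G) = \int_{[0,1]^n} \prod_{jk\in G}2\cos\bigl(\pi(\theta_j-\theta_k)\bigr)\,d\thetavec.
\]
The integrand attains its maximum $2^{|E(G)|}$ along the gauge line $\{c\mathbf 1\st c\in[0,1]\}$, reflecting the shift invariance $\thetavec\mapsto\thetavec+c\mathbf 1$.

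\emph{Localisation, the main obstacle.}
Partition the domain into a thin tube $\mathcal N$ around the gauge line and its complement. Cheeger's inequality together with the hypothesis $h(G)\ge\gamma\dmax$ gives a spectral gap for the Laplacian of order $\gamma^{2}\dmax$, which is what lets us control the integrand outside $\mathcal N$: any ``combinatorial near-saddle'' in which $\thetavec$ is approximately constant on a subset $U\subset V(G)$ is suppressed by the at least $h(G)|U|$ edges crossing $\partial_G U$, each contributing a factor strictly less than $2$ in absolute value. Making this suppression quantitative --- in particular ruling out configurations where $\theta_j-\theta_k\approx\tfrac12$ on many edges, which would mimic a new maximum --- is the most delicate step, and is where the hypothesis $\dmax\gg\log^8 n$ enters, through $\eps$-net and covering-number arguments on the ``bad'' set.

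\emph{Gaussian evaluation and quartic correction.}
Inside $\mathcal N$, decompose $\thetavec=c\mathbf 1+\thetavec'$ with $\thetavec'\perp\mathbf 1$ (Jacobian $\sqrt n$); the gauge integral $\int_0^1 dc$ contributes $1$. The expansion
\[
    \log\bigl(2\cos(\pi x)\bigr)=\log 2-\tfrac{\pi^2}{2}x^2-\tfrac{\pi^4}{12}x^4+O(x^6)
\]
gives at quadratic order the Gaussian weight $\exp\bigl(-\tfrac{\pi^2}{2}(\thetavec')^{\!\top}\! L\thetavec'\bigr)$, where $L$ is the Laplacian of $G$. Extending the $\thetavec'$-integral to all of $\mathbf 1^\perp$ and evaluating it via the matrix-tree theorem $\prod_{i\ge 2}\lambda_i(L)=n\,t(G)$ yields $(2/\pi)^{(n-1)/2}/\sqrt{n\,t(G)}$, which combines with $\sqrt n\cdot 2^{|E(G)|}$ to give the leading prefactor $2^{|E(G)|}(2/\pi)^{(n-1)/2}/\sqrt{t(G)}$. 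The quartic term contributes, under the same Gaussian, the exponential correction $\exp\bigl(-\tfrac14\sum_{jk\in G}R_{jk}^2\bigr)$, where $R_{jk}=L^+_{jj}-2L^+_{jk}+L^+_{kk}$ is the effective resistance across edge $jk$; this uses $\E[(\theta_j-\theta_k)^4]=3(R_{jk}/\pi^2)^2$. Good expansion together with high degree (note $\dmin\ge h(G)\ge\gamma\dmax$, so degrees have bounded ratio) yield the pointwise estimate $R_{jk}=1/d_j+1/d_k+O(\dmax^{-2})$, and summing over $|E(G)|=O(n\dmax)$ edges places the residual inside the claimed error $O\bigl(n\dmax^{-2}\log(2n/\dmax)\bigr)$. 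The $O(x^6)$ Taylor tail and all higher Gaussian cumulants are absorbed into the same bound.
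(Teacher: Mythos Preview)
This theorem is not proved in the present paper: it is quoted from the authors' companion article~\cite{IMZ} (``Cumulant expansion for counting Eulerian orientations'') and used here as a black box, so there is no proof in this document to compare your proposal against.

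That said, your sketch is faithful to the method of~\cite{IMZ}. The integral representation you start from is the same one the present paper invokes in the proof of Theorem~\ref{thm:newbound} (up to a linear change of variable), and the programme you outline --- localise near the diagonal using the isoperimetric hypothesis, extract the Gaussian factor $(2/\pi)^{(n-1)/2}/\sqrt{t(G)}$ via the matrix-tree theorem, then read off the quartic correction as $-\tfrac14\sum_{jk}R_{jk}^2$ with the effective-resistance approximation $R_{jk}\approx d_j^{-1}+d_k^{-1}$ --- is exactly the cumulant-expansion strategy that gives the theorem its name. Your identification of the localisation step as the hard part, and of $\dmax\gg\log^8 n$ as the place where the polylogarithmic loss enters, is also correct. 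What your sketch understandably elides is the substantial technical work in~\cite{IMZ} needed to make each of these steps rigorous: controlling the integrand away from the diagonal requires more than a spectral-gap bound (the integrand is not positive and has many near-maxima at half-integer shifts), and bounding the higher cumulants uniformly is where most of the effort in that paper goes.
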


In relation to Conjecture \ref{Conj}, the following  consequence of Theorem \ref{T:IMZ}
 is proved in Section~\ref{s:newbound}.

\begin{cor}\label{cor:eo}
    Under the assumptions of Theorem \ref{T:IMZ}, we have that 
    \[
       \eo(G) = \paul(G) +  O\left( \Dfrac{\log^2 \dmin} {\dmin}\right).
    \]
\end{cor}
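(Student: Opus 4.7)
The plan is to take logarithms in Theorem~\ref{T:IMZ}, compare term-by-term with a Stirling expansion of $\paul(G)$, and then show that the expansion hypothesis controls the resulting ``spanning tree'' correction. Stirling's formula gives $\log\binom{d_i}{d_i/2} = d_i\log 2 + \tfrac{1}{2}\log(2/(\pi d_i)) + O(1/d_i)$, whence
\[
\paul(G) = \tfrac{|E(G)|}{n}\log 2 + \tfrac{1}{2}\log\tfrac{2}{\pi} - \tfrac{1}{2n}\sum_{i=1}^n \log d_i + O(1/\dmin).
\]
Taking logarithms in Theorem~\ref{T:IMZ} and dividing by $n$ produces a parallel expansion of $\eo(G)$ in which these two main terms appear identically, $-\tfrac{1}{2n}\sum\log d_i$ is replaced by $-\tfrac{1}{2n}\log t(G)$, and a Laplace-type summand $-\tfrac{1}{4n}\sum_{jk\in G}(1/d_j+1/d_k)^2$ appears along with the error of Theorem~\ref{T:IMZ}. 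After subtracting, the corollary reduces to showing that the Laplace-type term and $\tfrac{1}{2n}\bigl|\sum_i\log d_i - \log t(G)\bigr|$ are both $O(\log^2\dmin/\dmin)$.

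The trivial bound $h(G)\le\dmin$, obtained by taking $U$ to be a singleton of minimum degree, combined with the hypothesis $h(G)\ge\gamma\dmax$ forces $\dmax\le\gamma^{-1}\dmin$, so all degrees are within a constant factor. Consequently each edge contributes at most $4/\dmin^2$ to the Laplace-type sum, giving an $O(\dmax/\dmin^{2})=O(1/\dmin)$ upper bound after division by $4n$; moreover, $\dmax\gg\log^8 n$ ensures that the Theorem~\ref{T:IMZ} error is $O(\dmin^{-15/8})$, which is negligible.

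What remains, and is the main obstacle, is the estimate $|\sum_i\log d_i - \log t(G)| = O(n\log^2\dmin/\dmin)$. By the matrix-tree theorem $n^2 t(G) = \det(L+\mathbf{1}\mathbf{1}^T) = (\prod_i d_i)\det(I+M)$, where $L=D-A$ is the combinatorial Laplacian and $M = D^{-1/2}(\mathbf{1}\mathbf{1}^T-A)D^{-1/2}$, so the task becomes $\log\det(I+M) = 2\log n + O(n\log^2\dmin/\dmin)$. The Cheeger inequality $\lambda_2(L)\ge h(G)^2/(2\dmax)=\Omega(\dmin)$ together with $\lambda_n(L)\le 2\dmax=O(\dmin)$ confines every nonzero Laplacian eigenvalue to the interval $[\Omega(\dmin), O(\dmin)]$. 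Splitting $M$ into its rank-one part $D^{-1/2}\mathbf{1}\mathbf{1}^T D^{-1/2}$, whose unique nonzero eigenvalue $\sum_i 1/d_i = \Theta(n/\dmin)$ accounts for the $2\log n$ main contribution, and the normalised adjacency part, whose non-principal eigenvalues are bounded away from $\pm 1$ by the spectral gap, one expands $\log\det(I+M) = \mathrm{tr}\log(I+M)$ using exact traces $\mathrm{tr}(M)$ and $\mathrm{tr}(M^2)$ together with spectral bounds on the higher-order terms. The logarithmic factors in the error arise from converting the Cheeger $\ell^2$-type spectral gap into the $\ell^\infty$ control needed to sum the logarithmic series on non-principal eigenvalues. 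This spectral step is the technical heart of the argument; the rest is routine Stirling bookkeeping.
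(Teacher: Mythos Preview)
Your overall reduction is correct: after taking logarithms in Theorem~\ref{T:IMZ} and subtracting the Stirling expansion of $\paul(G)$, everything comes down to showing $\bigl|\log t(G) - \sum_i\log d_i\bigr| = O(n\log^2\dmin/\dmin)$. Here, however, you and the paper diverge. The paper does not use the expansion hypothesis at this point at all; it simply invokes Kostochka's combinatorial bound (Lemma~\ref{lem:kost}), which gives $t(G)^{1/n} = \dgeom\cdot\dmin^{-O(\log\dmin/\dmin)}$ for \emph{every} connected simple graph with $\dmin\ge 2$, and the $\log^2\dmin$ in the corollary is exactly the exponent appearing in that lemma.

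Your spectral route can also be pushed through, and under the expansion hypothesis it actually yields the sharper error $O(1/\dmin)$, but the execution you sketch has real gaps. The rank-one piece $D^{-1/2}\mathbf{1}\mathbf{1}^T D^{-1/2}$ and the normalised adjacency $D^{-1/2}AD^{-1/2}$ do not commute when $G$ is irregular (their principal directions are $D^{-1/2}\mathbf{1}$ and $D^{1/2}\mathbf{1}$ respectively), so one cannot simply add their log-determinant contributions, and the large eigenvalue $\sum_i 1/d_i$ of the rank-one piece does not by itself account for the $2\log n$ term. A clean fix is to take the Schur complement of $I+M=\mathcal{L}+D^{-1/2}\mathbf{1}\mathbf{1}^T D^{-1/2}$ with respect to the kernel direction $D^{1/2}\mathbf{1}$ of the normalised Laplacian~$\mathcal{L}$; this gives exactly $\det(I+M)=\tfrac{n^2}{2|E(G)|}\prod_{i\ge 2}\mu_i$. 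Cheeger then bounds $\mu_2$ away from~$0$, and the exact moment identity $\sum_{i\ge 2}(\mu_i-1)^2 = 2\sum_{jk\in G}(d_jd_k)^{-1}-1 = O(n/\dmin)$ controls $\sum_{i\ge 2}\log\mu_i$ to within $O(n/\dmin)$. No logarithmic factors arise from this step, so your stated origin of the $\log^2\dmin$ is incorrect. In short: the paper's citation of Kostochka is a one-line black box; your approach, once repaired, is more self-contained and in fact slightly stronger.
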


\subsection{A new upper bound on $\EO(G)$}

Our next contribution is a new upper bound on~$\eo(G)$,
which implies that $\eo(G)=\paul(G)+O(1)$ for all simple graphs.

\begin{thm}\label{thm:newbound}
For any   connected multigraph $G$ with even degrees 
and $t(G)$ spanning trees,  
\[
    \EO(G) \le \Dfrac{2^{\card{E(G)}+3(n-1)/2}}{\pi^{(n-1)/2}\,t(G)^{1/2}}.
\]
\end{thm}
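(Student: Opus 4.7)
The approach is to start from the Fourier integral representation of $\EO(G)$, bound the integrand pointwise by a Gaussian on each ``sheet'' dictated by the $\pi$-periodicity of $|\cos|$, and then evaluate the resulting Gaussian integral via the Matrix-Tree Theorem to produce the $t(G)^{-1/2}$ factor.

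First, I would use $\mathbf{1}[x=0]=\frac{1}{2\pi}\int_{-\pi}^{\pi}e^{i\theta x}\,d\theta$ (valid for $x\in\mathbb{Z}$) to enforce the balance condition at each vertex; summing over all $\{\pm 1\}$-orientations of the edges then gives
\[
\EO(G) = \frac{2^{|E(G)|}}{(2\pi)^n}\int_{[-\pi,\pi]^n}\prod_{uv\in E(G)}\cos(\theta_u-\theta_v)\,d\theta.
\]
Because every vertex degree is even, shifting $\theta_v\mapsto\theta_v+\pi$ multiplies the integrand by $(-1)^{d_v}=+1$, so it is $\pi$-periodic in each coordinate; it is also invariant under the common shift $\theta\mapsto\theta+c\mathbf{1}$. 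Using these symmetries and taking absolute values reduces the theorem to upper bounding $\int_{[-\pi/2,\pi/2]^{n-1}}\prod_{uv}|\cos(\theta_u-\theta_v)|\,d\theta_2\cdots d\theta_n$ with $\theta_1$ fixed at $0$.

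Next, the key pointwise estimate is $|\cos x|\le e^{-\|x\|_\pi^2/2}$ with $\|x\|_\pi:=\operatorname{dist}(x,\pi\mathbb{Z})$: on $|x|\le\pi/2$ this follows from $(\log\cos x)''=-\sec^2 x\le -1$, and $\pi$-periodicity of $|\cos|$ supplies the general case. Letting $B$ denote the signed vertex-edge incidence matrix of $G$ (so $(B^\top\theta)_e=\theta_u-\theta_v$) and $L=BB^\top$ the Laplacian, I would dominate the integrand by a sum of Gaussians indexed by an integer ``sheet vector'' $k\in\mathbb{Z}^{E(G)}$:
\[
\prod_e|\cos((B^\top\theta)_e)|\le\sum_{k\in\mathbb{Z}^{E(G)}}\exp\bigl(-\tfrac12\|B^\top\theta-\pi k\|^2\bigr).
\]
The orthogonal decomposition $k=B^\top m+k^\perp$ (with $m\in\mathbb{Z}^{V(G)}$ and $k^\perp$ in the cycle space $\ker B$) yields the identity $\|B^\top\theta-\pi k\|^2=(\theta-\pi m)^\top L(\theta-\pi m)+\pi^2\|k^\perp\|^2$, which factors the sum into a vertex-shift Gaussian and a rapidly decaying cycle-space series.

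Finally, summing the vertex shifts $m\in\mathbb{Z}^{V(G)}$ unfolds the $\pi$-periodic torus to $\mathbb{R}^{n-1}$ (after quotienting by the null direction $\mathbf{1}$ of $L$); the Matrix-Tree Theorem then evaluates
\[
\int_{\mathbb{R}^{n-1}}e^{-\theta^\top L'\theta/2}\,d\theta = \frac{(2\pi)^{(n-1)/2}}{\sqrt{t(G)}}
\]
for any principal cofactor $L'$ of $L$. Combining this with a geometric bound on the cycle-space series $\sum_{k^\perp}e^{-\pi^2\|k^\perp\|^2/2}$ and collecting constants from all the preceding reductions produces the claimed inequality. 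The hardest step is precisely this last one: one has to verify that the $m$-translates tile $\mathbb{R}^{n-1}$ without overlap and that the cycle-space series contributes no more than the factor $2^{2(n-1)}=4^{n-1}$ of slack present in the stated constant $2^{3(n-1)/2}/\pi^{(n-1)/2}$ (which is larger by exactly this factor than the asymptotically sharp $(2/\pi)^{(n-1)/2}$ of Theorem~\ref{T:IMZ}).
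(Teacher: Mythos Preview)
Your integral representation and the pointwise bound $|\cos x|\le e^{-\|x\|_\pi^2/2}$ are fine, but the ``sheet'' decomposition breaks down at the key step. The claim that every $k\in\mathbb{Z}^{E}$ decomposes as $B^\top m+k^\perp$ with $m\in\mathbb{Z}^{V}$ and $k^\perp\in\ker B$ is false in general: if $k^\perp$ is required to lie in the integer cycle lattice $\ker B\cap\mathbb{Z}^E$, then $B^\top\mathbb{Z}^V+(\ker B\cap\mathbb{Z}^E)$ is a proper sublattice of $\mathbb{Z}^E$ of index $t(G)$ (the quotient is the critical group), so most $k$ are missed; if instead $k^\perp$ is the real orthogonal projection, then $m=L^{+}Bk$ is not integer, and ``summing the vertex shifts $m\in\mathbb{Z}^V$ unfolds the torus'' no longer makes sense. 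A correct unfolding (say via a spanning tree $T$, writing $k=B^\top m+k'$ with $m\in\mathbb{Z}^{V\setminus\{1\}}$ and $k'$ supported on $E\setminus T$) leads to the theorem's right-hand side multiplied by the theta series $\Theta_{\Lambda^*}(\pi^2/2)=\sum_{w\in P_{\mathrm{cyc}}\mathbb{Z}^E}e^{-\pi^2\|w\|^2/2}$ over the \emph{dual} cycle lattice, and you would need $\Theta_{\Lambda^*}(\pi^2/2)\le 2^{n-1}$. This fails already for the two-vertex multigraph with $2m$ parallel edges: there the whole computation reduces to $\int_{-\pi/2}^{\pi/2}\psi(\theta)^{2m}\,d\theta$ with $\psi(\theta)=\sum_{k\in\mathbb{Z}}e^{-(\theta-\pi k)^2/2}$, and since $\psi(0)=1+2e^{-\pi^2/2}+\cdots>1$, Laplace's method gives $\Theta_{\Lambda^*}(\pi^2/2)\asymp\psi(0)^{2m}$, which exceeds $2^{n-1}=2$ for $m$ large. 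In short, replacing $e^{-\|x\|_\pi^2/2}$ by the full periodised Gaussian $\sum_k e^{-(x-\pi k)^2/2}$ is too lossy near $x=0$, and that loss compounds over the edges.

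The paper avoids lattices altogether with a one-line coordinate trick. Having fixed $\theta_n=0$ and reduced to $\theta\in[-\pi/2,\pi/2]^{n-1}$, it folds each coordinate into $[-\pi/4,\pi/4]$ by setting $\phi_j=\theta_j$ on $[-\pi/4,\pi/4]$ and $\phi_j=\pm\pi/2-\theta_j$ outside; this map is two-to-one (cost $2^{n-1}$) and has the crucial property $|\phi_j-\phi_k|\le\min\{|\theta_j-\theta_k|,\,\pi-|\theta_j-\theta_k|\}=\|\theta_j-\theta_k\|_\pi$ by a short case check. Then $|F(\theta)|\le\exp\bigl(-\tfrac12\phi^\top Q\phi\bigr)$ with $Q$ the reduced Laplacian, and a single Gaussian integral over $\mathbb{R}^{n-1}$ plus the Matrix-Tree Theorem finishes. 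No cycle-space sum is needed; the entire $2^{n-1}$ slack comes from the fold, which is why the constant is exactly $2^{3(n-1)/2}/\pi^{(n-1)/2}$.
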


\begin{cor}\label{C:bounded}
 For simple graphs~$G$ with even degrees,
 \[
    \paul(G) \le \eo(G) \le \paul(G) + \begin{cases}
             \,\dfrac{27}{10},&\text{~always;} \\[0.5ex]
             \,\dfrac{21}{22},&\text{~if $G$ is regular;} \\[0.5ex]
             \,\dfrac{5}{6},&\text{~if $G$ is regular without 3-cycles.}
      \end{cases}
 \]
 Moreover, if $\dmin=\dmin(n)\to\infty$,
  \[
    \paul(G) \le \eo(G) \le \paul(G) + \log 2 
            + O\Bigl(\Dfrac {\log^2 \dmin}{\dmin}\Bigr).
 \]
\end{cor}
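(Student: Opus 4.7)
The lower bound $\paul(G)\le\eo(G)$ is just the Lieb--Wu/Schrijver inequality \eqref{eq:Sch}, so the work is in the four upper bounds. Both $\log\EO(G)$ and $n\,\paul(G)$ are additive over the connected components of $G$ (an Eulerian orientation is determined by its restriction to each component, and $\paul$ is a vertex average), so any universal upper bound for connected components lifts immediately to the whole graph by taking a convex combination. It therefore suffices to prove each of the four bounds under the extra hypothesis that $G$ is connected, so that Theorem~\ref{thm:newbound} applies directly.

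For connected simple $G$ with even degrees, I would take logarithms in Theorem~\ref{thm:newbound}, divide by $n$, and use $|E(G)|/n=\dav/2$ together with the Stirling-type estimate $\binom{d}{d/2}\ge 2^d\sqrt{2/(\pi(d+1))}$ (valid for even $d\ge 2$) and concavity of $\log$ to arrive at a master inequality of the form
\[
  \eo(G)-\paul(G)\;\le\; \log 2 + \tfrac12\log(\dav+1) - \frac{\log t(G)}{2n} + O(1/n).
\]
The rest of the proof is a balancing act between this inequality and two other tools: the Lieb--Wu upper bound $\eo(G)\le \tfrac1{2n}\sum_i\log\binom{d_i}{d_i/2}$, which is tight when $\dav$ is modest, and the Las Vergnas bound \eqref{eq:LVupper}, which improves with girth. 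For the regular case, a direct calculation shows that Lieb--Wu alone already beats $21/22$ for small~$d$, while for large~$d$ a lower bound of the form $\log t(G)/n\ge\log d - c$ (obtainable from spectral arguments via the matrix-tree theorem, or from McKay-type asymptotic enumeration of spanning trees in regular graphs) turns the master inequality into $\log 2+c/2$, and optimising the crossover in~$d$ yields the stated $21/22$. The no-3-cycle case uses the sharper Las Vergnas input with $g\ge 4$ to improve the constant to $5/6$. The ``always'' case uses looser universal estimates (in particular the obvious bound $t(G)\ge 1$ when no better one is available) and yields the weaker constant $27/10$. Finally, the $\dmin\to\infty$ case follows from the sharper Stirling expansion $\binom{d}{d/2}=(2^d/\sqrt{\pi d/2})(1+O(1/d))$ together with an asymptotic lower bound $\log t(G)/n\ge \log\dav - O(\log^2\dmin/\dmin)$, which matches the error term in Corollary~\ref{cor:eo}.

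The main obstacle is the lower bound on $\log t(G)/n$: this has to hold uniformly across all connected simple Eulerian graphs, without the expansion assumption that powers Theorem~\ref{T:IMZ}, and standard spectral bounds via the algebraic connectivity $\lambda_2$ degrade when the graph is poorly connected. One therefore has to stitch together several different $t(G)$ estimates and perform a case-by-case numerical optimisation to pin down the precise constants $27/10$, $21/22$ and $5/6$; by contrast, the reduction to the connected case and the derivation of the master inequality are entirely routine.
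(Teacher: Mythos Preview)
Your overall framework is right and matches the paper: reduce to connected components, take logarithms in Theorem~\ref{thm:newbound}, compare against $\paul(G)$ via a Stirling-type bound on $\binom{d}{d/2}$, and then feed in a lower bound on $t(G)$. You also correctly identify the spanning tree lower bound as the crux. But the specific substitutes you propose for that bound do not work, and this is a genuine gap.

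The bound $t(G)\ge 1$ cannot salvage the ``always'' case: with it, your master inequality reads $\eo(G)-\paul(G)\le \log 2 + \tfrac12\log\dgeom + O(1)$, which is unbounded. Spectral arguments via the matrix-tree theorem give $nt(G)=\prod_{i\ge 2}\lambda_i$, but without expansion you have no uniform control on the small Laplacian eigenvalues, and McKay's spanning-tree results concern \emph{random} regular graphs, not a deterministic lower bound. What the paper actually uses is Kostochka's theorem (stated as Lemma~\ref{lem:kost}): for every connected simple graph with $\dmin\ge 2$,
\[
   t(G)\;\ge\;\bigl(\dgeom\,\dmin^{-6\log\dmin/\dmin}M(\dmin)^{-1}\bigr)^n,
   \qquad M(\dmin)=\min\{2,\dmin^{3\log\dmin/\dmin}\}.
\]
The factor $\dgeom^n$ here is exactly what cancels the $\tfrac1{2n}\sum_i\log d_i$ coming from Stirling, leaving $\eo(G)-\paul(G)\le \tfrac32\log 2 + \tfrac{1}{4\dmin}+\tfrac{3}{\dmin}\log^2\dmin$, which is uniformly bounded and yields both the $27/10$ constant and the $\log 2 + O(\log^2\dmin/\dmin)$ asymptotic. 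Note also that the paper keeps the per-vertex term $\tfrac12\log d_i$ rather than passing to $\tfrac12\log(\dav+1)$ by concavity; your concavity step would require $t(G)^{1/n}$ to be bounded below in terms of $\dav$ rather than $\dgeom$, which Kostochka does not give.

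A second, smaller gap: in the regular case, the Lieb--Wu upper bound gives $\eo-\paul\le\tfrac{d}{2}\log 2-\tfrac12\log\binom{d}{d/2}\sim\tfrac14\log(\pi d/2)$, which already exceeds $21/22$ around $d\approx 30$, whereas the master inequality with Kostochka (even the sharper form $M(d)\le d^{3\log d/d}$) only drops below $21/22$ for $d$ in the high hundreds. The paper bridges this interval using the exact Las~Vergnas bound~\eqref{eq:LVupper} (not its simplified form~\eqref{eq:LVsimple}) for $4\le d\le 760$ in the girth-$3$ case and $4\le d\le 1900$ in the triangle-free case.
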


The above theorem and its corollary are proved in Section~\ref{s:newbound}.
If Conjecture~\ref{Sconj} is correct, the largest value
of $\eo(G)-\paul(G)$ for simple graphs is actually
$\frac13\log 2\approx 0.23105$, occurring for disjoint
unions of triangles.
Note that, although Theorem~\ref{thm:newbound} holds for multigraphs, Corollary~\ref{C:bounded} does not.
For multigraphs, the upper bound in~\eqref{eq:Sch}
is achieved, so $\eo(G)-\paul(G)$ is not uniformly bounded.

\subsection{Random graphs with given degrees}\label{ss:random}

Let $\calG(n,\dvec)$ denote the uniform probability space of simple graphs with $n$ vertices and
degree sequence~$\dvec$.
We will prove that Conjecture~\ref{Conj} holds in the following probabilistic sense. 
The proof will appear in Section~\ref{S:randomproof}.

 \begin{thm}\label{T:main}
   Let $\dvec=(d_1,\ldots,d_n)$ be a graphical degree sequence such that each $d_i$ is positive and even
  and either of the following two conditions holds:
  \begin{itemize}\itemsep=0pt
    \item[(R1)] $\dmax^2 = o(n)$,
    \item[(R2)] $\dmax \gg \log^8n$ and $\dmin \ge \gamma \dmax$ for some fixed $\gamma>0$.
  \end{itemize}
   If  $G \sim \calG(n,\dvec)$  then,
   for any fixed $\eps>0$, 
   \[
        \P{\eo(G) > \paul(G)+\eps} \le e^{-\Omega(\dmax^2+ n)}.
   \]
 \end{thm}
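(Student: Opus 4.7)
The two hypotheses are handled by different tools.

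Under \textup{(R2)}, the plan is to reduce to Corollary~\ref{cor:eo} by showing that $G\sim\calG(n,\dvec)$ is a good expander with overwhelming probability. Specifically, we aim to prove that, for some constant $\gamma'=\gamma'(\gamma)>0$,
\[
\P{h(G)<\gamma'\dmax}\le e^{-\Omega(\dmax^2+n)}.
\]
For any fixed $S\subset V(G)$ with $|S|=k\le n/2$, the hypothesis $\dmin\ge\gamma\dmax$ forces $\E|\partial_G S|=\Theta(k\dmax)$, and a configuration-model or switching concentration estimate should give $\P{|\partial_G S|<\gamma' k\dmax}\le e^{-\Omega(k\dmax)}$. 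A union bound over the $\binom{n}{k}\le e^{k\log(en/k)}$ choices of $S$ and summation over $k\in\{1,\ldots,\lfloor n/2\rfloor\}$ then yields the claimed tail, the $\dmax^2$ contribution coming from small $k$ (where the combinatorial factor is negligible) and the $n$ contribution from $k\sim n/2$. On the complementary event, Corollary~\ref{cor:eo} gives $\eo(G)-\paul(G)=O(\log^2\dmin/\dmin)=o(1)<\eps$ for $n$ large.

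Under \textup{(R1)} we use a first-moment/Markov argument. The key identity is
\[
\E[\EO(G)]=\frac{N(\dvec)}{|\calG(n,\dvec)|},
\]
where $N(\dvec)$ is the number of oriented simple graphs on $n$ vertices with both in-degree and out-degree equal to $d_i/2$ at vertex $i$; this is because the pairs (simple graph with even degrees $\dvec$, Eulerian orientation) are in bijection with such oriented graphs. Applying McKay--Wormald-type asymptotic formulae to both numerator and denominator, valid under $\dmax^2=o(n)$, we expect
\[
\E[\EO(G)]=\exp\bigl(n\paul(G)+o(n)\bigr),
\]
in agreement with the heuristic that orienting each edge of a random multigraph independently yields an Eulerian vertex with probability $\binom{d_i}{d_i/2}2^{-d_i}$ at vertex $i$, whose product over vertices equals $e^{n\paul(G)}$. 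Markov's inequality then gives
\[
\P{\EO(G)>e^{n(\paul(G)+\eps)}}\le e^{-n\eps+o(n)}=e^{-\Omega(n)}=e^{-\Omega(\dmax^2+n)},
\]
the last equality using $\dmax^2=o(n)$.

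The principal difficulty is the expansion tail under \textup{(R2)}: the target $e^{-\Omega(\dmax^2+n)}$ is much sharper than what a naive union bound supplies, and it requires carefully balancing the entropy $k\log(n/k)$ against the per-subset concentration $\Omega(k\dmax)$ across the whole range of $k$, particularly in the regime where $\dmax$ is comparable to $n$. In \textup{(R1)}, the routine but delicate point is to verify that the $o(n)$ error terms in the asymptotic enumeration of $|\calG(n,\dvec)|$ and $N(\dvec)$ are uniform over the allowed degree sequences, and that the corrections for forbidding loops and repeated arcs in the simple and oriented models contribute only $o(n)$ to $\log\E[\EO(G)]$.
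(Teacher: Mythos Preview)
Your plan matches the paper's proof: case (R1) is handled by asymptotic enumeration of Eulerian digraphs and undirected graphs with degrees $\dvec$ to compute $\E[\EO(G)]$ (the paper's Lemma~\ref{sparserandom}), followed by Markov's inequality (Theorem~\ref{T:sparse}); case (R2) is handled by a switching argument plus union bound to show $h(G)\ge\alpha\dmin$ with overwhelming probability (Theorem~\ref{L:cheeger}), after which Corollary~\ref{cor:eo} applies. One correction to your (R2) sketch: the union bound over cuts delivers only $e^{-\Omega(\dmax^2)}$, not $e^{-\Omega(\dmax^2+n)}$, because the sum over $k$ is dominated by the smallest relevant size $k\sim\dmin$ (the ``$n$ contribution from $k\sim n/2$'' does not survive the summation when $\dmax^2=o(n)$). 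The paper closes this gap exactly as you would expect: if (R2) holds with $\dmax^2=o(n)$ then (R1) also holds, so one may assume $\dmax^2=\Omega(n)$ when invoking the expansion route, whence $e^{-\Omega(\dmax^2)}=e^{-\Omega(\dmax^2+n)}$.
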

 Theorem \ref{T:main} 
 follows immediately from two more detailed results Theorem \ref{T:sparse}  and Theorem \ref{T:dense} that explore the dependence of the probability bounds with respect to $\eps$. In particular, these results show that if $G \sim \calG(n,\dvec)$ then, with probability tending to~$1$,
 \begin{itemize}
 \item  $ \eo(G) \le  \paul(G)+ O\left(\dfrac{\dmax^2+\log n}{n}\right)$
 for the range (R1) of Theorem \ref{T:main}; 
 \item   $ \eo(G) \le  \paul(G)+ O\left(\dfrac{\log^2 \dmax}{\dmax}\right)$
 for the  range (R2) of Theorem \ref{T:main}.
 \end{itemize}

For the range (R1), we combine the result of \cite{silver} on the enumeration of bipartite graphs with the switching method to find 
an asymptotic formula for 
\[
  \E\,[\EO(G)]=\E\, [e^{n \eo (G)}].
\]
It turns out that  
\[ 
   \E\,[e^{n \eo (G)}] = e^{n \paul(G) + O(\dmax^2)}.
\]
Then we use standard arguments to show the concentration of $\eo(G)$.

 For the range (R2), we employ Theorem \ref{T:IMZ}. 
 Note that it requires $G$ to have a sufficiently large isoperimetric constant. Applying the switching method, we show that random graphs with given degrees are good expanders with high probability, which could be of independent interest; see Section \ref{S:same-order}.

\subsection{A new heuristic estimate for regular graphs}\label{ss:heuristic}

The asymptotic formula in Theorem \ref{T:IMZ} suggests that $\eo(G) + \frac{1}{2} \tau(G)$  
may exhibit much less dependency than $\eo(G)$ on the structure of the graph.
Here the \textit{spanning tree entropy} $\tau(G)$ is the logarithm of the number of spanning trees of $G$ normalised by the number of vertices: 
\[
  \tau(G):= \dfrac{1}{n} \log t(G).
\]

As a consequence of  Theorem  \ref{T:main} we have established that 
Pauling's estimate $\paul(G) = \log\binom{d}{d/2} - \dfrac{d}{2} \log 2$ 
is asymptotically correct for random simple $d$-regular graphs
provided $d(n)$ grows quickly enough. 
McKay \cite{mckay1983spanning}  showed that   spanning tree entropy for this
random graph model (for the case when $3\le d= O(1)$) is concentrated around
\[
    \tau_{d} := \log\Dfrac{(d-1)^{d-1}}{(d^2-2d)^{d/2-1}}.
\]
Thus, it is natural to consider the following estimate for the residual entropy of a simple $d$-regular graph based on a correction of the number of spanning trees with respect to a random graph:
\begin{equation}\label{ours}
\ours(G) := \paul(G) + \dfrac12  \tau_{d} - \dfrac{1}{2} \tau(G).
\end{equation}
It follows from \cite[Thm.\ 5.2]{mckay1983spanning} that
$\tau(G)<\tau_d(G)$ if $G$ is $d$-regular for $d\ge 4$, except
possibly for $d=4, n\le 18$.
Thus, our estimate is consistent with $\eo(G)\ge\paul(G)$.

\begin{figure}[ht]
  \centering
\unitlength=1cm
  \begin{picture}(15,7.5)(0,0)
    \put(0.2,0.5){\includegraphics[scale=0.35]{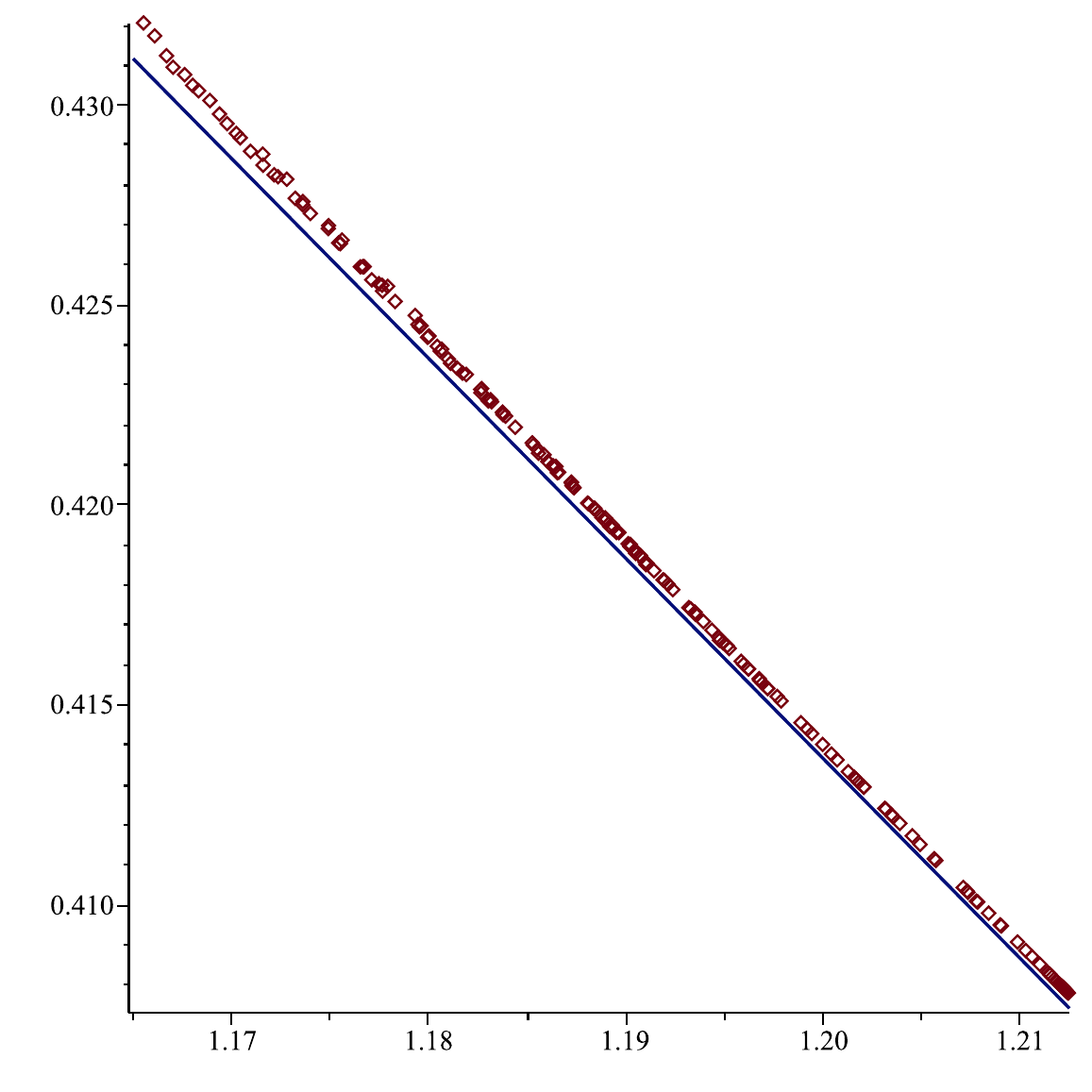}}
    \put(8,0.5){\includegraphics[scale=0.35]{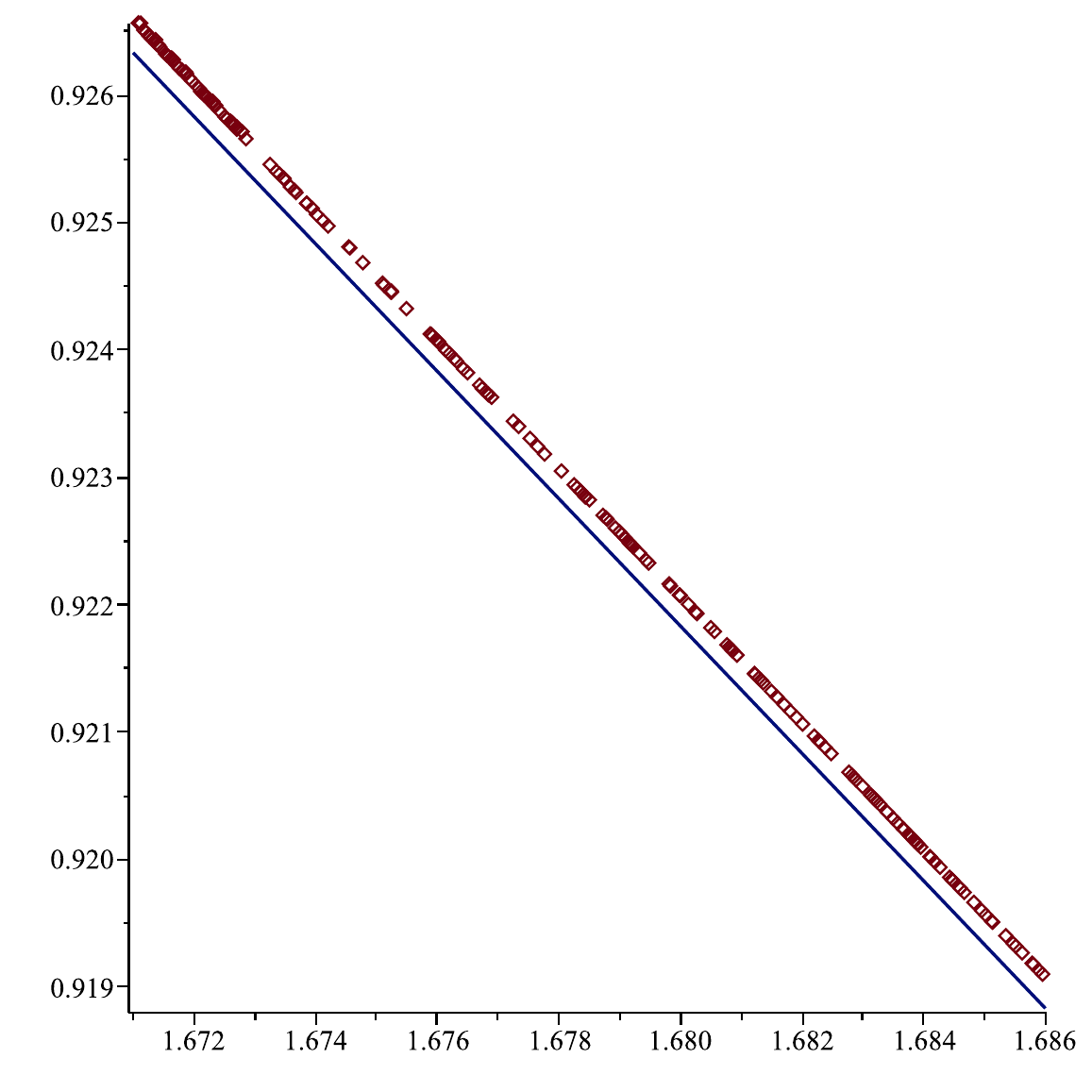}}
    \put(1.9,0.1){corrupted $C_{40}\cart C_{40}$}
    \put(9.3,0.1){corrupted $C_{12}\cart C_{12}\cart C_{12}$}
  \end{picture}
 \caption{$\eo(G)$ (vertical) versus $\tau(G)$ (horizontal) for some randomised graphs. The solid line is $\ours(G)$.\label{fig:randomish}}
\end{figure}

In order to illustrate our case for this estimate, we tested
several hundred large graphs of degree 4 or~6.
The method by which $\eo(G)$ was estimated will be
described in Section~\ref{s:numeric}.
To show a continuum between a regular lattice structure
and a random graph, we started with a 2-dimensional
square lattice $C_{40}\cart C_{40}$ (1600 vertices,
degree~4), and a 3-dimensional simple cubic lattice
$C_{12}\cart C_{12}\cart C_{12}$ (1728 vertices, degree~6)
and applied the switching $\{ab,cd\}\to\{ac,cd\}$ in
random places between 0 and 10,000 times (avoiding multiple edges and loops).
The results shown in Figure~\ref{fig:randomish} suggest
that the correlation between $\eo(G)$ and $\tau(G)$
is even stronger than between $\eo(G)$ and $\ours(G)$.
However, the  relationship between the two entropies is
not exact, as shown in Figure~\ref{fig:samest}.

\begin{figure}[ht]
  \[ \includegraphics[scale=0.5]{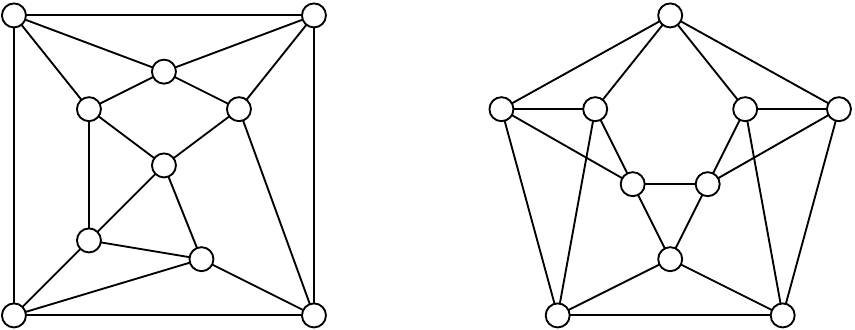} \]
 \caption{Two 4-regular graphs with the same 
 eigenvalues and number of spanning trees but different numbers of Eulerian orientations\label{fig:samest}}
\end{figure}
The next theorem  shows that even a weaker asymptotic relationship fails to hold in general.
This is unfortunate as such a relationship would be sufficient to demonstrate that the residual
entropies of hexagonal ice (Ih) and cubic ice (Ic) are identical; see Section \ref{S:3Dice}.
\begin{thm}
  There is no function $\mathring\eo(d,\tau)$ with the following property
  for every~$d$ and~$\tau$.
  If $G_1,G_2,\ldots$ is an increasing sequence of connected
  $d$-regular graphs such that $\tau(G_i)\to\tau$,
  then $\eo(G_i)\to\mathring\eo(d,\tau)$.
\end{thm}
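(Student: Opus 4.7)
The plan is to exhibit a value of $d$ and a real $\tau$ together with two increasing sequences of connected $d$-regular graphs whose spanning tree entropies both converge to $\tau$ but whose residual entropies converge to distinct limits. As a starting gadget I would take the pair $\{H_1,H_2\}$ displayed in Figure~\ref{fig:samest}: two connected $4$-regular graphs on the same number of vertices $n_0$, adjacency-cospectral (hence Laplacian-cospectral, so $t(H_1)=t(H_2)$), but with $\EO(H_1)\ne \EO(H_2)$.

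For each $k\ge 3$, set $G_k^{(j)}:=H_j\cart C_k$ for $j=1,2$. Each $G_k^{(j)}$ is a connected $6$-regular graph on $kn_0$ vertices. Since the Laplacian eigenvalues of a Cartesian product are pairwise sums of eigenvalues of the two factors, Laplacian cospectrality of $H_1$ and $H_2$ passes to $G_k^{(1)}$ and $G_k^{(2)}$. By the matrix-tree theorem this forces $t(G_k^{(1)})=t(G_k^{(2)})$ for every $k$, so the two sequences have identical spanning tree entropies and, in particular, share a common limit $\tau$ as $k\to\infty$.

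To compare residual entropies I would set up a transfer matrix that sweeps around the cycle factor. Label the layers $\ell\in\Integers/k\Integers$ and encode the orientation of the inter-layer matching between layers $\ell$ and $\ell+1$ by $\mathbf a_\ell\in\{0,1\}^{n_0}$. The Eulerian condition forces the in-degree at vertex $v$ in the $\ell$-th copy of $H_j$ to equal $2+\mathbf a_\ell(v)-\mathbf a_{\ell-1}(v)$. Letting $N_j(\mathbf c)$ count orientations of $H_j$ with in-degree $2+c_v$ at each vertex $v$, and $T_j$ be the non-negative $2^{n_0}\times 2^{n_0}$ matrix with entries $(T_j)_{\mathbf a,\mathbf a'}:=N_j(\mathbf a'-\mathbf a)$, this yields
\[
    \EO(G_k^{(j)})=\operatorname{tr}(T_j^{k}),
\]
and hence, by Perron--Frobenius,
\[
    \lim_{k\to\infty}\eo(G_k^{(j)})=\Dfrac{1}{n_0}\log\lambda_j,
\]
where $\lambda_j$ is the spectral radius of $T_j$.

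The main obstacle is verifying that $\lambda_1\ne\lambda_2$. A first observation rules out full isospectrality of $T_1,T_2$: taking $k=1$ gives $\operatorname{tr}(T_j)=2^{n_0}\EO(H_j)$, and these are distinct by hypothesis. Distinct traces do not immediately give distinct Perron eigenvalues, so I would confirm the inequality of the leading eigenvalues by explicit numerical computation of $\EO(G_k^{(j)})$ at moderate $k$ for the Figure~\ref{fig:samest} pair: the ratio $\EO(G_k^{(1)})/\EO(G_k^{(2)})$ grows as $(\lambda_1/\lambda_2)^{k}$ up to polynomial factors, so a mismatch of dominant growth rates would be straightforward to detect. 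In the unlikely event that the Figure~\ref{fig:samest} pair accidentally yielded equal Perron eigenvalues, it would suffice to substitute another Laplacian-cospectral pair with distinct $\EO$, since once the full spectra of $T_1$ and $T_2$ disagree the generic situation is that their Perron eigenvalues disagree as well.
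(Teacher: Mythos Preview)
Your proposal is correct and follows essentially the same route as the paper: form $H_j\cart C_k$ from the cospectral pair of Figure~\ref{fig:samest}, observe that Laplacian cospectrality passes to the product so the spanning tree entropies coincide, and use the transfer matrix to obtain the limiting residual entropies. The paper carries out exactly the numerical check you describe and reports $\lim_k\eo(H_1\cart C_k)\approx 0.9525279$ versus $\lim_k\eo(H_2\cart C_k)\approx 0.9524817$, so your hedge about the Perron eigenvalues possibly coinciding is unnecessary for this particular pair.
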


\begin{proof}
  (See Section~\ref{s:products} for the definitions and 
  elementary theory.)
  For $m\ge 3$, define the Cartesian products $H_1(m):=G_1\cart C_m$
  and $H_2(m) := G_2\cart C_m$, where $G_1$ and $G_2$ are the graphs in Figure~\ref{fig:samest} and $C_m$ is the cycle of length~$m$.
  Since $H_1(m)$ and $H_2(m)$ have the same eigenvalues, they have the same spanning tree entropy and it converges to a limit as $m\to\infty$.
  However, using the transfer matrix method
  (see Theorem~\ref{thm:transfer} below),
  we have determined that
  \[
     \lim_{m\to\infty}\eo(H_1(m)) \approx 0.9525279,
     \qquad
     \lim_{m\to\infty}\eo(H_2(m)) \approx 0.9524817.
  \]
  Since the two limits are different, $\mathring\eo(6,\tau)$ doesn't exist.
\end{proof}

\nicebreak
\section{Proof of Theorem \ref{thm:newbound},
Corollary~\ref{cor:eo} and Lemma~\ref{lem:discon}}\label{s:newbound}

The \textit{Laplacian matrix} $L(G)$ of a loop-free multigraph $G$ with $n$ vertices
is the $n \times n$ matrix with entries given by
\[
L_{ij} = 
\begin{cases}
\,d_i, & \text{ if $i = j$ and $d_i$ is the degree of vertex $i$}; \\
\,-\ell, & \text{ if $i\neq j$ and $\ell$ edges join vertices $i$ and $j$}.
\end{cases}
\]
As is well known, the number of zero eigenvalues of $L(G)$ equals the number of components of~$G$, and the other eigenvalues are strictly positive.
In addition, the Matrix Tree Theorem says that the number of spanning trees of $G$ is the absolute value of every minor of~$L(G)$.

\begin{proof}[Proof of Theorem~\ref{thm:newbound}.]
 The number of Eulerian orientations of $G$ is the constant term in
 \[
      \prod_{jk\in G}\, \Bigl( \Dfrac{x_j}{x_k} + \Dfrac{x_k}{x_j}\Bigr).
 \]
  As shown in \cite{Tejas}, this implies that
   \[
         \EO(G) = 2^{\card{E(G)}} \pi^{-n} \int_{-\pi/2}^{\pi/2}\!\!\! \cdots\!\int_{-\pi/2}^{\pi/2}
         F(\thetavec)\,d\thetavec,
   \]
   where $\thetavec=(\theta_1,\ldots,\theta_n)$ and
   \[
       F(\thetavec) = \prod_{jk\in G} \cos(\theta_j-\theta_k).
   \]
   Since $F(\thetavec)$ is invariant under uniform shift of the arguments, we
   can fix $\theta_n=0$ and write
    \[
         \EO(G) = 2^{\card{E(G)}} \pi^{-n+1} \int_{-\pi/2}^{\pi/2}\!\!\! \cdots\!\int_{-\pi/2}^{\pi/2}
         F(\theta_1,\ldots,\theta_{n-1},0)\,d\theta_1\cdots d\theta_{n-1}.
   \]
   Define $\phivec=(\phi_1,\ldots,\phi_{n-1})$, where
   \[
        \phi_j = \begin{cases}
                          \, \frac\pi2 - \theta_j, &~\text{ if $\frac\pi4\le\theta_j\le\frac\pi2$}; \\
                           \;\theta_j,                 &~\text{ if $-\frac\pi4\le\theta_j\le\frac\pi4$}; \\
                           \,-\frac\pi2 - \theta_j, &~\text{ if $-\frac\pi2\le\theta_j\le-\frac\pi4$},
                        \end{cases}
   \]
   and also set $\phi_n=0$ for convenience.
   Note that $\thetavec\in\bigl[-\frac\pi2,\frac\pi2\bigr]^n$ implies
   $\phivec\in\bigl[-\frac\pi4,\frac\pi4\bigr]^n$. 
   By considering all the cases, we can check that, for $\theta_j,\theta_k\in 
   \bigl[-\frac\pi2,\frac\pi2\bigr]$,
   \[
        \abs{\phi_j-\phi_k} \le \min\bigl\{ \abs{\theta_j-\theta_k},\pi-\abs{\theta_j-\theta_k}\bigr\}.
   \]
   We also have, for $x\in[-\pi,\pi]$, that $\abs{\cos x}\le\exp\bigl(-\frac12\min\{\abs x,\pi-\abs x\}^2\bigr)$.
   Therefore, for $\thetavec\in\bigl[-\frac\pi2,\frac\pi2\bigr]^n$, we have
   \[
        \abs{F(\thetavec)} \le \exp\Bigl(-\dfrac12 \sum_{jk\in G} (\phi_j-\phi_k)^2 \Bigr)
        = \exp\Bigl( -\dfrac12 \phivec^{\mathrm T\!}Q\phivec\Bigr),
   \]
   where $Q=Q(G)$ is the Laplacian matrix of~$G$ with the final row and column removed.
   In changing variables from $\thetavec$ to $\phivec$ in the integral, we need to take account
   of the fact that $\theta_j\mapsto\phi_j$ is a two-to-one map.  To compensate for this,
   we multiply by $2^{n-1}$:
    \begin{align*}
         \EO(G) &\le 2^{\card{E(G)}+n-1} \pi^{-n+1} \int_{-\pi/4}^{\pi/4}\!\! \cdots\!\int_{-\pi/4}^{\pi/4}
          \exp\Bigl( -\dfrac12 \phivec^{\mathrm T\!}Q\phivec\Bigr)\,d\phivec \\
          &\le 2^{\card{E(G)}+n-1} \pi^{-n+1} \int_{-\infty}^{\infty}\!\! \cdots\!\int_{-\infty}^{\infty}
          \exp\Bigl( -\dfrac12 \phivec^{\mathrm T\!}Q\phivec\Bigr)\,d\phivec \\
          &= 2^{\card{E(G)}+3(n-1)/2} \pi^{-(n-1)/2} \abs{Q}^{-1/2} \\
          &= 2^{\card{E(G)}+3(n-1)/2} \pi^{-(n-1)/2} t(G)^{-1/2},
   \end{align*}
   where the final step is to apply the Matrix Tree Theorem.
\end{proof}

\begin{lemma}[Kostochka~\cite{Kostochka}]\label{lem:kost}
A connected simple graph $G$ with $n$
vertices and minimum degree $\dmin\ge 2$ has
\[
     \bigl(\dgeom\, \dmin^{-6\log\dmin/\dmin}M(\dmin)^{-1}\bigr)^n
     \le t(G) \le \Dfrac{1}{n-1}\dgeom^{\,n},
\]
where $\dgeom$ is the geometric mean degree
and $M(\dmin)=\min\{ 2, \dmin^{3\log \dmin/\dmin} \}$.
\end{lemma}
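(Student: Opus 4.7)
The plan is to prove the upper and lower bounds separately, both relying on the Matrix Tree Theorem. Let $\mu_1 \ge \cdots \ge \mu_{n-1} > \mu_n = 0$ denote the Laplacian eigenvalues of $G$, so that $n\,t(G) = \mu_1 \cdots \mu_{n-1}$, and, equivalently, $t(G) = \det L^{(v)}$ for any vertex $v$, where $L^{(v)}$ is the reduced Laplacian.

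For the upper bound $t(G) \le \dgeom^n/(n-1)$, the first natural attempt is Hadamard's inequality applied to $L^{(v)}$: since $L^{(v)}$ is positive definite with diagonal entries $d_u$, one gets $\det L^{(v)} \le \prod_{u \ne v} d_u$, i.e.\ $t(G) \le \dgeom^n/d_v$. This holds for every $v$ but only yields $t(G) \le \dgeom^n/\dmax$, which is weaker than claimed (since $\dmax \le n-1$). A combinatorial alternative is to count rooted spanning trees, each of whose non-root vertices has at most $d_v$ choices of parent in $G$; summing over roots yields $n\,t(G) \le \sum_r \prod_{v \ne r} d_v$, i.e.\ $t(G) \le \dgeom^n/\dharm$, again falling short. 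To obtain exactly $1/(n-1)$, a refinement is needed. My first attempt would be to apply the Grone--Merris majorisation of the Laplacian spectrum by the conjugate degree sequence, combined with the Schur-concavity of $\log$, to upgrade the AM--GM bound $\prod \mu_i \le (2m/(n-1))^{n-1}$ into one involving $\prod d_v$ with the desired denominator.

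For the lower bound, I would produce many spanning trees constructively. Order the vertices $v_1, \ldots, v_n$ in a well-chosen way (for instance, a BFS order from a vertex of maximum degree) and build a spanning tree by choosing, for each $v_i$ with $i\ge 2$, a parent among those neighbors of $v_i$ already placed. In the ideal situation, after roughly half the vertices have been placed, each new vertex $v_i$ has about $d_{v_i}/2$ neighbors available, giving an overall main term of $(\dgeom/2)^n$. This is the origin of the factor $M(\dmin)^{-1}$, which tends to $1/2$ as $\dmin \to \infty$; the correction $\dmin^{-6\log\dmin/\dmin}$ absorbs the loss at low-degree vertices, where the ``half the neighbors'' heuristic is less efficient and the per-vertex contribution can drop below $d_v/2$ by a factor polynomial in $\dmin$.

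The main obstacle, I expect, is the lower bound in the small-$\dmin$ regime, where a naive greedy argument loses too much if several low-degree vertices are processed consecutively. Handling this typically requires either averaging over many random orderings (so that no small-degree vertex is systematically placed too early) or a local surgery argument around each minimum-degree vertex to guarantee that a definite fraction of its neighbors is available when it is attached; this careful bookkeeping around low-degree vertices is the technical heart of Kostochka's original proof in \cite{Kostochka}.
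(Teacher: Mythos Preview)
The paper does not give a self-contained proof here: it defers to Kostochka's argument in~\cite{Kostochka} and only records one extra inequality,
\[
   \sum_{i=0}^{\lfloor 3n\log k/k\rfloor} \binom{n-1}{i}\le
   \dfrac12 \min\bigl\{ 2,\, k^{3\log k/k} \bigr\}^n
   \qquad(2\le k\le n-1),
\]
which is what turns Kostochka's lower bound into the stated form with the factor $M(\dmin)$. Your lower-bound plan (build trees greedily in a random or BFS order, so that most vertices see roughly half their neighbours already placed) is indeed the spirit of Kostochka's proof; the exponent $6\log\dmin/\dmin$ and the cap $M(\dmin)$ come out of exactly that kind of bookkeeping, and the binomial tail estimate above is the quantitative ingredient your sketch is missing.

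Your upper-bound plan, however, has a real gap. Grone--Merris says the conjugate degree sequence majorises the Laplacian spectrum, and Schur-concavity of $\sum_i\log x_i$ then gives a \emph{lower} bound on $\prod_i\mu_i$, the wrong direction. The direction you actually want is Schur--Horn (the eigenvalue vector majorises the diagonal), and applying Schur-concavity of the elementary symmetric polynomial $e_{n-1}$ to $(\mu_1,\ldots,\mu_{n-1},0)\succ(d_1,\ldots,d_n)$ yields
\[
   n\,t(G)=e_{n-1}(\mu)\le e_{n-1}(d)=\dgeom^{\,n}\cdot\Dfrac{n}{\dharm},
\]
i.e.\ $t(G)\le\dgeom^{\,n}/\dharm$. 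Since $\dharm\le n-1$ with equality only for $K_n$, this falls short of $\dgeom^{\,n}/(n-1)$. Neither Hadamard, rooted-tree counting, nor the majorisation route you outline reaches the stated denominator $n-1$; Kostochka's upper bound uses a different argument, which the paper does not reproduce.
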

\begin{proof}
 This is not stated explicitly in~\cite{Kostochka} but follows
 from the proof given there plus the observation that
 \[
   \sum_{i=0}^{\lfloor 3n\log k/k\rfloor} \binom{n-1}{i}\le
   \dfrac12 \min\{ 2, k^{3\log k/k} \}^n
 \]
 for $2\le k\le n-1$.
 Note that the constants in Kostochka's proof are far from
 optimized. Improving them would also improve the constants
 in our Corollary~\ref{C:bounded}, but we have not attempted
 to do that.
\end{proof}

\begin{proof}[Proof of Corollary~\ref{C:bounded}]
Suppose $G$ is disconnected, with components $G_1,\ldots,G_m$.
Let $n_i$ be the number of vertices of $G_i$,
and let $n=n_1+\cdots+n_m$.
From the definition of $\paul(G)$, and the fact
that $\EO(G)=\prod_{i=1}^m \EO(G_i)$, we have
\begin{equation}\label{eq:disbound}
      \eo(G)-\paul(G) = \sum_{i=1}^m\, \Dfrac{n_i}{n} 
      \bigl(\eo(G_i)-\paul(G_i)\bigr)
      \le \max\nolimits_{i=1}^m \bigl(\eo(G_i)-\paul(G_i)\bigr).
\end{equation}
Therefore it suffices to prove Corollary~\ref{C:bounded} for
connected graphs~$G$.

For $d\ge 2$, asymptotic expansion for large $d$ and
computation for small $d$ gives
\begin{equation}\label{binombound}
    \log \binom{d}{d/2} \ge  d \log 2 - \dfrac12 \log d - \dfrac12 \log \Dfrac{\pi}{2} - 
    \Dfrac{1}{4d}.
\end{equation}
     
 Define $\rhonew(G)$ to be the upper bound on $\eo(G)$
 given by Theorem~\ref{thm:newbound} in conjunction with
 Lemma~\ref{lem:kost}.
 Then
 \begin{align}
     \rhonew(G)-\eo(G) &\le \Dfrac1n \sum_{i=0}^n \delta(d_i,\dmin,n)
     \le \max\nolimits_{i=1}^n \delta(d_i,\dmin,n),
            \quad\text{where} \notag \\
     \delta(d_i,\dmin,n) 
     &= d_i\log 2 + 2\log 2 - \dfrac12\log\pi
     - \dfrac12\log d_i - \log\binom{d_i}{d_i/2} \notag \\[-0.5ex]
     &{\qquad}+\Dfrac{3}{\dmin}\log^2\dmin
     + \Dfrac{1}{2n}\log\Dfrac\pi 8 \label{eq:deltabnd1}\\
     &\le \dfrac32\log 2 + \Dfrac{1}{4\dmin} 
        + \Dfrac{3}{\dmin}\log^2\dmin,\label{eq:deltabnd2}
 \end{align}
where we have applied $M(\dmin)\le 2$, \eqref{binombound},
$\log\frac{\pi}{8}<0$ and $d_i\ge \dmin$.
The expression~\eqref{eq:deltabnd2} is unimodal, with its largest
value for even $\dmin$ at $\dmin=8$ and other values less than 2.69.
The value of~\eqref{eq:deltabnd1} for $d_i=\dmin=8$ and
$n\to\infty$ is less than $2.69242<\frac{27}{10}$.

In the case of regular graphs, the second claim is trivial if
the degree $d=2$ so assume $d\ge 4$.
For $4\le d\le 760$ in the second case, and
$4\le d\le 1900$ in the third case, the bounds follow
from~\eqref{eq:LVupper}, noting that the worst case
for $n$ is $n=d+1$.
For larger degrees, apply 
Theorem~\ref{thm:newbound} and Lemma~\ref{lem:kost} with $M(\dmin)\le \dmin^{3\log\dmin/\dmin}$.

The last claim follows from \eqref{eq:deltabnd2}.
\end{proof}

\begin{proof}[Proof of Lemma~\ref{lem:discon}]
Suppose $G$ has components $G_1,\ldots,G_m$ with orders 
$\alpha_1 n,\ldots,\alpha_m n$.
Define 
\[
     g(h)=\sup \bigl\{\eo(H)-\paul(H) \st \dharm(H)\ge h, H\in\calC\bigr\}.
\]
Clearly $g(h)$ is non-increasing, and by the
assumptions of the lemma, $g(h)\to 0$ as $h\to\infty$.
Suppose $\dharm(G)\to\infty$.
Without loss of generality, for some $\ell\ge 0$,
$\dharm(G_i) \le\dharm(G)^{1/2}$ for $1\le i\le\ell$ and
$\dharm(G_i) >\dharm(G)^{1/2}$ for $\ell+1\le i\le m$.
By the definition of harmonic mean,
$\dharm(G)^{-1} = \sum_{i=1}^m \alpha_i\dharm(G_i)^{-1}
\ge \sum_{i=1}^\ell \alpha_i\dharm(G)^{-1/2}$, so
$\sum_{i=1}^\ell \alpha_i\le \dharm(G)^{-1/2}$.
Now, by~\eqref{eq:disbound} and Corollary~\ref{C:bounded},
\[
  \eo(G)-\paul(G) = \sum_{i=1}^m \alpha_i (\eo(G_i)-\paul(G_i)) 
   \le \dfrac{27}{10} \sum_{i=1}^\ell \alpha_i
   + \sum_{i=\ell+1}^m \alpha_i g(\dharm(G)^{1/2}) = o(1).
   \qedhere
\]
\end{proof}

\begin{proof}[Proof of Corollary~\ref{cor:eo}]
Observe that 
 definition \eqref{cheeger} and assumption $h(G)\ge \gamma \dmax$ imply
 that $\dmin \ge \gamma \dmax$. Applying Theorem \ref{T:IMZ} and
 using Lemma~\ref{lem:kost} and the asymptotic formula~\eqref{binombound}
 we obtain the claimed bound.
\end{proof}

\nicebreak
\section{Proof of Theorem \ref{T:main}}\label{S:randomproof}

We first prove the case of $\dmax=o(\sqrt{n})$. 

\begin{lemma}\label{sparserandom}
  Let $\dvec=\dvec(n) =(d_1,\ldots,d_n)$ be a degree sequence with all degrees positive and even.
  Assume $\dmax^2 = o(\dav n)$.
  Then the average number of Eulerian orientations of a random undirected
  simple graph with degree sequence $\dvec$ is
\[
     n^{1/2}\, 2^{-\dav n/2}
     e^{O(\dmax^2) }
     \prod_{i=1}^{n} \binom{d_i}{d_i/2}.
\]
\end{lemma}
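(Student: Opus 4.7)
The plan is to compute $\E[\EO(G)]$ via a double count: let $N$ be the number of pairs $(G,O)$ with $G\in\calG(n,\dvec)$ and $O$ an Eulerian orientation of $G$, so that $N=\card{\calG(n,\dvec)}\cdot\E[\EO(G)]$. I would reinterpret each such pair as a bipartite graph, apply an asymptotic enumeration to count those bipartite graphs, correct the count by switchings, and finally divide by the known asymptotic formula for $\card{\calG(n,\dvec)}$.

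Each pair $(G,O)$ is equivalent to a simple digraph $D$ on $[n]$ in which every vertex has in-degree and out-degree $d_v/2$ and whose underlying undirected graph is simple. Encode $D$ as a bipartite graph $B$ on two disjoint copies $V_+,V_-$ of $[n]$ by setting $u_+v_-\in E(B)$ if and only if $u\to v$ in $D$. Then both sides of $B$ have degree sequence $(d_1/2,\ldots,d_n/2)$, there are no diagonal edges $v_+v_-$ (since $G$ has no loops), and no crossing pair $\{u_+v_-, v_+u_-\}$ with $u\ne v$ (since $G$ is simple). Let $B_0$ denote the total number of bipartite graphs with both side-degrees equal to $(d_1/2,\ldots,d_n/2)$ and $m=\dav n/2$ edges. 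The enumeration result of \cite{silver}, which is valid when $\dmax^2=o(\dav n)$, should give
\[
    B_0 = \Dfrac{m!}{\prod_i (d_i/2)!^2}\cdot e^{O(\dmax^2)}.
\]

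A switching argument then relates $B_0$ to the count $N$ of bipartite graphs avoiding both forbidden configurations. Under the uniform bipartite model, the expected number of diagonal edges is $O(\dmax)$ and the expected number of crossing pairs is $O(\dmax^2)$, and a McKay-type switching analysis based on local swaps $\{u_+v_-, x_+y_-\}\to\{u_+y_-, x_+v_-\}$ should yield geometric decay in both counts, giving $N=B_0\cdot e^{-O(\dmax^2)}$. Combining this with the McKay--Wormald asymptotic $\card{\calG(n,\dvec)}=(2m)!/(m!\,2^m\prod_i d_i!)\cdot e^{O(\dmax^2)}$ (valid in the same range) and the identity $\binom{d_i}{d_i/2}=d_i!/(d_i/2)!^2$ gives
\[
    \E[\EO(G)] = \Dfrac{(m!)^2\,2^m}{(2m)!}\cdot\prod_{i=1}^n \binom{d_i}{d_i/2}\cdot e^{O(\dmax^2)}.
\]
Stirling's formula simplifies the prefactor $(m!)^2\,2^m/(2m)!$ to $\sqrt{\pi m}\cdot 2^{-m}$, and absorbing the harmless factor $\sqrt{\pi\dav/2}$ into the error term (since $\dav\le\dmax$ and $\log\dmax=O(\dmax^2)$) yields the formula stated in the lemma.

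The technical obstacle is the switching step: obtaining the $e^{-O(\dmax^2)}$ bound on $N/B_0$ rigorously for both diagonal edges and crossing pairs simultaneously, under the sparsity assumption $\dmax^2=o(\dav n)$. One must show that the two approximately Poisson-distributed counts decouple up to the claimed multiplicative error, and ensure that the residual degree sequence remains well-behaved throughout the switching cascade. This is the main technical input inherited from the random-graph enumeration literature.
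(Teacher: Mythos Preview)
Your proposal is correct and follows essentially the same route as the paper: encode Eulerian-oriented simple graphs as bipartite graphs with side-degrees $\dvec/2$, enumerate via~\cite{silver}, correct by switching, and divide by the enumeration of simple graphs from~\cite{symmetric}. Two small differences of execution are worth noting. First, the paper invokes the version of \cite[Theorem~4.6]{silver} that already forbids the diagonal edges $v_+v_-$, so only the crossing pairs (2-cycles) need to be switched away; you instead start from the fully unrestricted count $B_0$ and propose to switch away both diagonals and crossings, which costs you an extra (easy) case. Second, the paper uses a single 4-edge switching that removes an entire crossing pair $\{u_+v_-,v_+u_-\}$ at once by rewiring both bad edges against two auxiliary edges; your sketched 2-edge swap $\{u_+v_-,x_+y_-\}\to\{u_+y_-,x_+v_-\}$ would also work but requires a bit more care to avoid creating new diagonals or crossings in the process.
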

\begin{proof}
The average is equal to the number of Eulerian oriented graphs
with in-degrees and out-degrees $\frac12\dvec$, divided by the number
of undirected graphs with degrees~$\dvec$.

A digraph with in-degrees and out-degrees $\frac12\dvec$ can be
represented as an undirected bipartite graph with vertices
$v_1,\ldots,v_n$ and $w_1,\ldots,w_n$. For each~$i$, both $v_i$ and~$w_i$
have degree~$d_i/2$. Vertex~$v_i$ is adjacent to~$w_j$ if
the digraph has an edge from vertex~$i$ to vertex~$j$.
Since the digraph has no loops, $v_i$ is not adjacent to~$w_i$ for
any~$i$.  With such restrictions, the number of bipartite graphs
given in~\cite[Theorem~4.6]{silver} is, to the precision we need here,
\begin{equation}\label{digcount}
    \Dfrac{(\dav n/2)!}{\bigl(\prod_{i=1}^n (d_i/2)!\bigr)^2}\,
                   e^{O(\dmax^2) }.
\end{equation}
However, such bipartite graphs may correspond to digraphs with 2-cycles,
which is not permitted for simple Eulerian oriented graphs.
This occurs if, for some $i\ne j$, $v_i$ is adjacent to $w_j$,
and $v_j$ is adjacent to $w_i$.
Arbitrarily order all such potential bad pairs and label the corresponding 
events by $E_1,\ldots,E_{\binom n2}$. The probability that none of
these events occurs is the product of conditional probabilities:
\begin{equation}\label{Eventprod}
    P_{\dvec} = \prod_{i=1}^{\binom n2}\,
        \bigl(1 - \Pr( E_i \mid \cap_{1\le j<i} \bar E_j) \bigr),
\end{equation}
where $\bar E_j$ is the complement of~$E_j$.
Consider the bad event $E_x$ that $\{v_i,w_j\}$ and $\{v_j,w_i\}$ are
both edges. Choose two additional edges $\{v',w'\}$ and $\{v'',w''\}$,
and replace these four edges with $\{v_i,w'\}$, $\{v_j,w''\}$,
$\{v',w_i\}$ and $\{v'',w_j\}$. Provided $v'\ne v''$, $w'\ne w''$
and none of the additional edges are incident to any of
$v_i,v_j,w_i,w_j$ or their neighbours, this operation removes the
bad pair of edges $\{v_i,w_j\}$ and $\{v_j,w_i\}$
without creating any additional bad pairs.
Observe that it can
be performed in $\frac14 n^2\dav^2 - O(n\dmax^2\dav)
=\Omega(n^2\dav^2)$ ways
and doesn't change the degree sequence.

In the reverse direction, if the event $E_x$ does not occur and we
wish to create it, the operation is determined by the choice of
an edge incident with each of $v_i,v_j,w_i,w_j$, which can be made
in $O(\dmax^4)$ ways.
Note that both these bounds hold 
irrespective of any conditioning on other bad events not occurring.
Therefore, the conditional probability
of this bad event occurring is $O\bigl(\dmax^4/(n^2\dav^2)\bigr)$.
By~\eqref{Eventprod},
\[
    P_{\dvec} = \bigl( 1 - O(\dmax^4/(n^2\dav^2))\bigr)^{\binom n2}
       =   e^{-O(\dmax^2) }.
\]
Thus we find that the number of Eulerian oriented graphs with in-degrees
and out-degrees~$\frac12\dvec$ is also given by~\eqref{digcount}.

The number of undirected simple graphs with degrees~$\dvec$ was
determined in \cite[Theorem~4.6]{symmetric} to be
\begin{equation}\label{unlabelledcount}
   \Dfrac{(\dav n)!}{(\dav n/2)!\, 2^{\dav n/2} \prod_{i=1}^n d_i! }\,
         e^{O(\dmax^2) }.
\end{equation}
Dividing~\eqref{digcount} by~\eqref{unlabelledcount} completes the proof.
\end{proof}

\begin{thm}\label{T:sparse}
Consider $G\sim\calG(n,\dvec)$, where $\dvec$ satisfies the assumptions of Lemma~\ref{sparserandom}.  Then, for any $\mu=\mu(n)> 0$,
\[
 \P{\eo(G) \ge \paul(G) + \mu} \le \dfrac{1}{{e^{n\mu}-1}} n^{1/2}e^{O(\dmax^2)}.
\]
\end{thm}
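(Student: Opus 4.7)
The plan is a short application of Markov's inequality, after rewriting the first moment computed in Lemma~\ref{sparserandom} in the right form and identifying a natural non-negative random variable to shift by. Two ingredients are already in hand: Lemma~\ref{sparserandom} gives $\E[\EO(G)]$ asymptotically, and the Lieb--Wu--Schrijver inequality~\eqref{eq:Sch} supplies the deterministic lower bound $\EO(G)\ge e^{n\paul(G)}$ (note $\paul(G)$ is a function of the degree sequence, which is fixed for $G\sim\calG(n,\dvec)$).

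First I would translate the statement of Lemma~\ref{sparserandom} into the equivalent form
\[
\E[\EO(G)] = n^{1/2}\,e^{O(\dmax^2)}\,e^{n\paul(G)}.
\]
This is immediate from~\eqref{def:Pauling}: unpacking the definition of $\paul(G)$ gives $e^{n\paul(G)} = 2^{-\dav n/2}\prod_{i=1}^n \binom{d_i}{d_i/2}$, which matches the combinatorial product appearing in the lemma.

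Next I would set $Y:=\EO(G)-e^{n\paul(G)}$, a non-negative random variable by~\eqref{eq:Sch}, with $\E[Y] \le n^{1/2}e^{O(\dmax^2)} e^{n\paul(G)}$. The event $\{\eo(G)\ge\paul(G)+\mu\}$ is the same as $\{\EO(G)\ge e^{n(\paul(G)+\mu)}\}$, which in turn is the same as $\{Y\ge (e^{n\mu}-1)\,e^{n\paul(G)}\}$. Markov's inequality then yields
\[
\P{\eo(G)\ge\paul(G)+\mu} \le \frac{\E[Y]}{(e^{n\mu}-1)\,e^{n\paul(G)}} \le \frac{n^{1/2}\,e^{O(\dmax^2)}}{e^{n\mu}-1},
\]
which is exactly the claim.

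There is essentially no obstacle in this argument, since all the analytic work has been absorbed into Lemma~\ref{sparserandom}. The only delicate point worth flagging is that one must apply Markov to the shifted variable $\EO(G)-e^{n\paul(G)}$ rather than to $\EO(G)$ itself; this is what produces the $e^{n\mu}-1$ denominator instead of $e^{n\mu}$. The refinement is negligible for large $n\mu$ but is quantitatively important when $n\mu=O(1)$, which is precisely the regime needed if one wants to deduce sharp concentration statements such as the $\dmax^2/n$ error bound mentioned in the discussion after Theorem~\ref{T:main}.
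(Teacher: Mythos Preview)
Your argument is correct and essentially identical to the paper's own proof: both use the deterministic lower bound $\EO(G)\ge e^{n\paul(G)}$ from~\eqref{eq:Sch} together with the first-moment estimate of Lemma~\ref{sparserandom}, and the paper's inequality $\E[\EO(G)]\ge (1-p)e^{n\paul(G)}+p\,e^{n(\paul(G)+\mu)}$ is algebraically the same as your Markov bound on the shifted variable $Y=\EO(G)-e^{n\paul(G)}$. Your remark about why shifting is needed to obtain the $e^{n\mu}-1$ denominator is exactly the point.
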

\begin{proof}
 Let $p=\P{\eo(G) \ge \paul(G) + \mu}$.
 By \eqref{eq:Sch}, we have $\eo(G)\ge\paul(G)$ always.
 Then
 \[
    \E\,[ \EO(G)]
    = \E\,[ e^{n \eo(G)} ] 
    \ge (1-p)e^{n\paul(G)}+p e^{n(\paul(G)+\mu)}
    = (1-p+e^{n\mu}p)\,e^{n\paul(G)}.
 \]
 Comparing this to the estimate of the expectation
 in Lemma~\ref{sparserandom}, we have
\[
(1-p+e^{n\mu}p)\,e^{n\paul(G)}
\le 
n^{1/2}\, 2^{-\dav n/2}
     e^{ O(\dmax^2) }
     \prod_{i=1}^n\binom{d_i}{d_i/2},
\]
and by the definition of $\paul(G)$ in \eqref{def:Pauling},
\[
p \le \Dfrac{1}{{e^{n\mu}-1}}
\biggl(n^{1/2}\, 2^{-\dav n/2} e^{ O(\dmax^2) }
\prod_{i=1}^n \binom{d_i}{d_i/2}
- 2^{-\dav n/2} \prod_{i=1}^n \binom{d_i}{d_i/2} \biggr),
\]
which completes the proof.
\end{proof}

\subsection{Expansion properties of random graphs with given degrees}\label{S:same-order}

The next theorem establishes a lower bound on the isoperimetric constant of a random graph with given degrees.
Previous results have been restricted to random regular graphs, see \cite{bollo, KSVW, KW}.

\begin{thm}\label{L:cheeger}
Let $\dvec=\dvec(n)$ be a degree sequence and $C=C(n)>0$ be such that
we have $\dmin\ge 24+32C$ and $\dmax^2\le C\dav n$, where $\dav$ is
the average degree. Define $\alpha:=\frac{1}{6+8C}$.
If $G\sim\calG(n,\dvec)$, then
\[
  \P{ h(G) \ge \alpha\dmin} \ge 1 -  
   \Bigl(\Dfrac{\dmin^2}{5\dav n}\Bigr)^{\frac{5}{26}\alpha\dmin^2}. 
\]
\end{thm}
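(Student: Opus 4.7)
The plan is to bound, for each subset $U \subset V(G)$ with $|U| = k$ and $1 \le k \le n/2$, the probability that $|\partial_G U| < \alpha\dmin k$, and then take a union bound over all such $U$.

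First I would dispose of small sets: since every $v \in U$ contributes at least $\dmin$ half-edges and at most $k(k-1)$ of them can be absorbed by edges internal to $U$, one has $|\partial_G U| \ge k(\dmin - k + 1)$, which is already at least $\alpha\dmin k$ whenever $k \le (1-\alpha)\dmin + 1$. So only the range $(1-\alpha)\dmin < k \le n/2$ requires probabilistic analysis.

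For fixed $U$ in this nontrivial range, let $a_s(U)$ denote the number of graphs in $\calG(n,\dvec)$ with $|\partial_G U| = s$. I would use the switching that picks an edge $\{u_1,u_2\}$ with both endpoints in $U$ and an edge $\{v_1,v_2\}$ with both endpoints in $V \setminus U$, and replaces them with the two boundary edges $\{u_1,v_1\}$ and $\{u_2,v_2\}$, provided these are not already present in~$G$. This takes a graph in $a_s(U)$ to one in $a_{s+2}(U)$. Standard double-counting shows that the number of forward switchings from a graph with $|\partial_G U| = s$ is at least $(D(U)-s)(D(V\setminus U)-s) - B$ and the reverse count from a graph with $|\partial_G U| = s+2$ is at most $(s+2)(s+1)$, where $D(W):=\sum_{v\in W} d_v$ and the bad-configuration correction $B$ is a lower-order term controlled by $\dmax^2 \le C\dav n$. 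This yields
\[
  \Dfrac{a_s(U)}{a_{s+2}(U)} \le \Dfrac{(s+2)(s+1)\,(1+o(1))}{(D(U)-s)(D(V\setminus U)-s)},
\]
which is bounded away from $1$ for $s \le \alpha\dmin k$ once $\alpha \le 1/(6+8C)$. Iterating this ratio and using that the distribution of $|\partial_G U|$ concentrates around the much larger typical value $D(U)D(V\setminus U)/(\dav n)$ to lower-bound $|\calG(n,\dvec)|$, one extracts an explicit tail bound of the form
\[
  \P{|\partial_G U| < \alpha\dmin k} \le \Bigl(\Dfrac{\dmin^2}{5\dav n}\Bigr)^{\!c\,\alpha\dmin k}
\]
for an explicit constant $c$.

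A union bound $\binom{n}{k} \le (en/k)^k$ is then applied and summed over $k$ from $\lceil(1-\alpha)\dmin\rceil$ to $\lfloor n/2\rfloor$. The key feature is that $\dmin \ge 24+32C$ forces $\alpha\dmin \ge 4$, which is exactly what is needed so that the exponent $c\,\alpha\dmin k$ overpowers $(en/k)^k$ and produces geometric decay in~$k$. The sum is dominated by its value at the critical $k = \Theta(\dmin)$, yielding the exponent $\tfrac{5}{26}\alpha\dmin^2$. The main obstacle is the quantitative bookkeeping needed to obtain these exact constants: the bad-switching term $B$ must be controlled uniformly in $s$ against the main term using $\dmax^2 \le C\dav n$, the concentration argument that compares $a_{\alpha\dmin k}(U)$ with the mode must be made sharp, and the union-bound optimisation over $k$ has to be balanced carefully, since inefficiencies at any of these stages would erode the precise factor $\tfrac{5}{26}$ in the final estimate.
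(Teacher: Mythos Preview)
Your plan matches the paper's proof: dispose of sets with $|U|\le(1-\alpha)\dmin$ deterministically, use exactly this edge-swap switching to bound $|S_k|/|S_{k+2}|$, then take a union bound summed geometrically and dominated at $k\approx(1-\alpha)\dmin$.

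The one place where the paper is cleaner is the step you flag as the main obstacle, the concentration argument comparing the tail with the mode.  The paper avoids it entirely: with $K:=\lfloor\frac12(\alpha\dmin s-1)\rfloor$ it writes
\[
  \frac{\sum_{j\le\alpha\dmin s}|S_j|}{\sum_{j\ge 0}|S_j|}
  \;\le\;
  \max_{j\le\alpha\dmin s}\frac{|S_j|}{|S_{j+2K}|},
\]
valid because $j\mapsto j+2K$ is injective and every $|S_{j+2K}|$ already appears in the full denominator.  The right-hand ratio then telescopes into $K$ of the factors $|S_j|/|S_{j+2}|$ you have just bounded (this is why the paper runs the switching analysis for all $j\le 2\alpha\dmin s$, not just $j\le\alpha\dmin s$).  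No mode, no concentration.  A minor arithmetic slip: your forward count $(D(U)-s)(D(V\setminus U)-s)$ is too large by a constant factor, since $D(U)-s$ is twice the number of internal edges; the paper's lower bound is $\frac12(D(U)-s)(D(V\setminus U)-s)-s\,\dmax^2$ after allowing both rewiring choices.
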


\begin{proof}
Let $S\subseteq \{1,\ldots,n\}$, and let $S_k$ be the set of all graphs with degree sequence~$\dvec$
such that $k$ is the number of edges between~$S$ and $\bar S:=\{1,\ldots,n\}-S$.
Assume that $s:=|S|\le \frac12 n$ and $k\le 2\alpha\dmin s$.
We can also assume that $s\ge (1-\alpha)\dmin$, since otherwise
at least $\alpha\dmin s$ edges leave~$S$.

Consider $G_k\in S_k$.  We can create a graph in $S_{k+2}$ by removing two
edges $v_1 v_2\in G_k[S]$, an $w_1w_2\in G_k[\bar S]$
and replacing them by either $v_1w_1$ and $v_2w_2$, or $v_1w_2$ and $v_2w_1$.
One or both of these operations may be unavailable due to an existing edge.
If $d'$ is the average degree of a vertex in $S$, the number of ways to perform the
operation is at least
\begin{align}
 \dfrac12 ( d's - k)(\dav n-d's-k) - k\dmax^2 
 &\ge \dfrac12 (\dmin s-k)(\dav n - \dmin s -k) - Ck\dav n \notag \\
 &\ge \dfrac14(1-4\alpha+4\alpha^2-8\alpha C) \dmin\dav s n \notag \\
 &= \Dfrac{1+C}{(3+4C)^2}  \dmin\dav s n, \label{Cheeg2}
\end{align}
where the first inequality holds since the first expression is increasing in~$d'$ for $s\le \frac12n$ and
$\dmax^2\le C\dav n$, while  the second line holds since  $k\le 2\alpha\dmin s$
and $s\le\frac12 n$.

Given $G_{k+2}\in S_{k+2}$ we can recover a graph in $G_k$ by performing the same
operation in reverse, which is determined by the choice of two edges between
$S$ and $\bar S$.  This time we need an upper bound, namely
\begin{equation}\label{Cheeg3}
   \binom{k+2}{2}.
\end{equation}
Combining~\eqref{Cheeg2} and~\eqref{Cheeg3}, we find that
\[
    \Dfrac{|S_k|}{|S_{k+2}|} \le \Dfrac{(3+4C)^2(k+2)^2}{2(1+C)\dmin\dav s n}.
\]

Define $K=K(s):=\lfloor \frac12( \alpha\dmin s-1)\rfloor$, which
implies by AM/GM that 
\[
   \prod_{i=1}^{2K}\, (\lfloor\alpha\dmin s\rfloor+i) \le \bigl(\dfrac 32 \alpha\dmin s\bigr)^K 
   \quad \text{~for $\alpha\dmin s\ge 2$}.
\]
This enables us to bound the probability that $k\le \alpha \dmin s$:
\begin{align*}
    \Dfrac{\sum_{k\le \alpha \dmin s} |S_k|}{\sum_{j\ge 0} |S_j|}
    \le \max_{k\le \alpha \dmin s} \Dfrac{|S_k|}{|S_{k+2K}|}
    &\le \biggl(\Dfrac{(3+4C)^2}{2(1+C)\dmin\dav s n}\biggr)^{\!K\,} \prod_{i=1}^{2K}\, (\lfloor \alpha\dmin s\rfloor +i)^2 \\
    &\le \biggl( \Dfrac{9 \dmin s}{ 32(1+C)\,\dav n}\biggr)^{\!K}.
\end{align*} 
Therefore, the probability that there is any set $S$ with $|S|=s$ having fewer than $\alpha\dmin s$ edges
leaving is  at most 
\[
    P(s) := 
      \binom ns \biggl( \Dfrac{9\dmin s}{32(1+C)\dav n}\biggr)^{\!\alpha\dmin s/2-3/2}.
\]
Using $(s+1)^{s+1}\le 4 s^{s+1}$ we find that $P(s+1)/P(s)$ is increasing with~$s$
and is bounded by $\frac{81}{128}$ at~$s=\frac12 n$.
Recalling that we can assume $s\ge (1-\alpha)\dmin$, we conclude that
\[
   \P{h(G)<\alpha\dmin} < 3\,P((1-\alpha)\dmin).
\]
It remains to simplify this bound.  First we apply $\binom ns \le \bigl(\frac {en}{s}\bigr)^s$.
Then we note that $\alpha\le\frac16$ and $\alpha\dmin\ge 4$ together imply that
$\frac12 \alpha(1-\alpha)\dmin^2 - (1-\alpha)\dmin - \frac32 \ge \frac{5}{26}\alpha\dmin^2$.
Then, under the same conditions,
\[
    3 \,\biggl(\Dfrac{e}{1-\alpha}\biggr)^{(1-\alpha)\dmin}
      \biggl( \Dfrac{9(1-\alpha)}{32(1+C)}\biggr)^{\alpha(1-\alpha)\dmin^2/2-3/2}
    \le 5_{\vphantom{x}}^{-\frac{5}{26}\alpha\dmin^2},
\]
This completes the proof.
\end{proof}

\nicebreak
\subsection{Completing the proof of Theorem~\ref{T:main}}\label{S:large}

Combining Corollary \ref{cor:eo} and Theorem \ref{L:cheeger}, we immediately get the following.

\begin{thm}\label{T:dense}
Let $G\sim\calG(n,\dvec)$
such that  $\dmax \gg \log^{8} n$ and   $\dmin \ge \gamma \dmax$ for some fixed constant $\gamma > 0$. 
Then, with probability at least $1 - e^{-\Omega(\dmax^2)}$,
\[
        \eo(G)  =  \paul(G) +  O\left( \dfrac{\log^2 \dmax} {\dmax}\right).
\]
\end{thm}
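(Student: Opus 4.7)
The plan is to assemble the statement directly from the two main ingredients established just above: the structural formula of Corollary~\ref{cor:eo} and the expansion bound of Theorem~\ref{L:cheeger}. The proof is in essence a verification that the hypotheses line up, so I expect no real obstacle beyond bookkeeping.

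First I would apply Theorem~\ref{L:cheeger} to $G\sim\calG(n,\dvec)$. To invoke it, I need to produce a constant $C>0$ with $\dmax^{2}\le C\dav n$ and $\dmin\ge 24+32C$. Since $\dmin\ge\gamma\dmax$, we have $\dav\ge\gamma\dmax$, hence
\[
    \Dfrac{\dmax^{2}}{\dav n}\le\Dfrac{\dmax}{\gamma n}\to 0,
\]
because $\dmax\le n$ and the regime forces $n\to\infty$. Consequently, for all large $n$ any small fixed $C$ (e.g.\ $C=\frac1{32}$) works for the first condition, and the hypothesis $\dmax\gg\log^{8}n$ combined with $\dmin\ge\gamma\dmax$ eventually guarantees the second. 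Theorem~\ref{L:cheeger} then yields, with $\alpha=\alpha(C,\gamma)$ a positive constant,
\[
    \P{h(G)\ge\alpha\dmin}\ge 1-\Bigl(\Dfrac{\dmin^{2}}{5\dav n}\Bigr)^{\!(5/26)\alpha\dmin^{2}}.
\]
Since $\dmin\le\dav$ implies $\dmin^{2}/(5\dav n)\le\dmin/(5n)\le\frac15$, the failure probability is at most $5^{-(5/26)\alpha\dmin^{2}}=e^{-\Omega(\dmin^{2})}=e^{-\Omega(\dmax^{2})}$, using $\dmin\ge\gamma\dmax$.

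Next I would condition on the event $h(G)\ge\alpha\dmin$, which implies $h(G)\ge\alpha\gamma\dmax$ — precisely the expansion hypothesis of Theorem~\ref{T:IMZ} with the constant $\gamma$ replaced by $\alpha\gamma>0$. Together with the standing assumption $\dmax\gg\log^{8}n$, this lets me apply Corollary~\ref{cor:eo} to conclude
\[
    \eo(G)=\paul(G)+O\Bigl(\Dfrac{\log^{2}\dmin}{\dmin}\Bigr).
\]
Finally, the relation $\dmin\ge\gamma\dmax$ gives $\log\dmin=\log\dmax+O(1)$ and $1/\dmin=O(1/\dmax)$, so the error term is $O(\log^{2}\dmax/\dmax)$, which is exactly the stated bound. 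Combining this deterministic conclusion with the probability estimate from Theorem~\ref{L:cheeger} completes the proof.

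If there is any place that requires a second glance, it is the step that absorbs $\dmin^{2}/(5\dav n)$ into a fixed constant strictly less than one so that the exponent $-\Omega(\dmax^{2})$ genuinely dominates; but since $\dmin\le\dav$ and $\dmin\le n$, this is automatic.
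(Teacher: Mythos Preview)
Your approach is correct and matches the paper's, which simply says the result follows by combining Corollary~\ref{cor:eo} with Theorem~\ref{L:cheeger}.

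One small slip: the claim that $\dmax/(\gamma n)\to 0$ is not justified, since in the dense regime $\dmax$ may be of order~$n$ (so the ratio is only bounded by $1/\gamma$, not $o(1)$). Fortunately this is harmless: you only need \emph{some} fixed $C$ with $\dmax^{2}\le C\dav n$, and $C=1/\gamma$ always works because $\dav\ge\dmin\ge\gamma\dmax$ and $\dmax<n$. With that choice $\alpha=1/(6+8/\gamma)$ is still a positive constant, and the rest of your argument (the bound $\dmin^{2}/(5\dav n)\le 1/5$ and the passage through Corollary~\ref{cor:eo}) goes through unchanged.
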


Finally   we show how to derive Theorem \ref{T:main}
from  Theorem \ref{T:sparse}  and Theorem \ref{T:dense}.
If the degree sequence $\dvec$  satisfies (R1), 
that is, $\dmax^2=o(n)$, then
we have $\dmax^2 = o(\dav n)$,
since all the degrees are positive.
Therefore, by Theorem \ref{T:sparse},
for fixed $\eps>0$,
\[
    \P{\eo(G) \ge \paul(G) + \eps}
 \le \dfrac{1}{{e^{n\eps}-1}}
 n^{1/2}e^{O(\dmax^2)} = e^{ - O(n) }.
\]

Assume that $\dmax^2=\Omega(n)$ and the degree sequence $\dvec$ satisfies (R2).
Theorem \ref{T:main} then follows from Theorem~\ref{T:dense} by noting that
  $\dmax^2=\Omega(\dmax^2+n)$ and
$(\log \dmax)^2 / \dmax=o(1)$.


\section{Numerical estimation via Eulerian partitions}\label{s:numeric}


The number of Eulerian orientations of a finite graph
is a $\#P$-complete problem equivalent to finding the
permanent of a 0--1 matrix~\cite{schrijver1983bounds,MihailW1996}.
However, the order of the matrix equals the number of edges in the graph, and the notorious difficulty of estimating large sparse permanents means that above about 100 edges we found it difficult to obtain accurate values.

Instead, we employed a repeatedly discovered theorem~\cite[Eq.\ (11)]{lieb1972two},~\cite{LasVergnas1981}, and  also~\cite{{bollo1}}. This result, stated below as Theorem~\ref{EOGvalue}, allowed us to obtain accurate estimates sometimes into thousand of vertices.

Let $G$ be a graph with even degrees $d_1,\ldots,d_n$.
An \textit{Eulerian partition} is a partition of the edges into undirected closed trails, where a \textit{trail} is a walk that doesn't repeat edges. Let $\calP(G)$ denote the set of all Eulerian partitions, and note that
\[
   \card{\calP(G)} = \prod_{i=1}^n\, \frac{d_i!}{(d_i/2)!\,2^{d_i/2}},
\]
since each partition is uniquely described by a pairing of the edges at each vertex.
For an Eulerian partition $P\in\calP(G)$, let $\card P$ denote the number of closed trails it comprises.

\begin{thm}\label{EOGvalue}
For any graph $G$ with even degrees $d_1,\ldots,d_n$,
\[
   \eo(G) = \paul(G) + \Dfrac1n \log T(G),~~\text{where}~~
   T(G) := \frac{1}{ \abs{\calP(G)} }
     \sum_{P\in\calP(G)} 2^{\card P}.
\]
\end{thm}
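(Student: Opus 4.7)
The plan is to first prove a combinatorial identity
\[
\EO(G)\cdot\prod_{i=1}^n (d_i/2)! \;=\; \sum_{P\in\calP(G)} 2^{\card P},
\]
then divide both sides by $\card{\calP(G)}$ and take logarithms. The theorem then reduces to matching the surviving factors against the definition of $\paul(G)$.

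For the identity I would double count pairs $(O,\sigma)$ in which $O$ is an Eulerian orientation of $G$ and $\sigma=(\sigma_v)_{v\in V(G)}$ is, at each vertex $v$, a bijection from the $d_v/2$ edges oriented into $v$ by $O$ to the $d_v/2$ edges oriented out of $v$ by $O$. Counted in one way, the number of such pairs is obviously $\EO(G)\cdot\prod_i (d_i/2)!$. Counted in the other way, $(O,\sigma)$ induces a bijection on the set of directed edges of $O$: send each directed edge with head $v$ to the out-edge of $v$ it is paired with by $\sigma_v$. The orbits of this bijection partition the directed edges into directed closed trails. Forgetting the direction on each trail yields an Eulerian partition $P\in\calP(G)$, and conversely $(O,\sigma)$ is recovered from $P$ by independently choosing one of the two cyclic orientations on each of the $\card P$ closed trails in $P$. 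Hence the number of pairs also equals $\sum_{P\in\calP(G)} 2^{\card P}$, proving the identity.

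To finish, divide both sides by $\card{\calP(G)} = \prod_i d_i!/((d_i/2)!\,2^{d_i/2})$ to obtain
\[
\EO(G)\cdot\prod_{i=1}^n \dfrac{((d_i/2)!)^2\,2^{d_i/2}}{d_i!} \;=\; T(G),
\]
and then take $\frac1n\log$ of both sides. The left side simplifies to $\eo(G)-\paul(G)$ on recognising that $\paul(G)=\frac{1}{n}\sum_i\bigl[\log\binom{d_i}{d_i/2}-\frac{d_i}{2}\log 2\bigr]$, while the right side is $\frac{1}{n}\log T(G)$, completing the proof.

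The only point demanding a little care is the reversibility of the ``follow-$\sigma$'' construction — that the orbit partition of the induced map on directed edges really does give a partition into closed trails. Since $\sigma_v$ is a bijection at each vertex for each fixed $O$, the induced map on directed edges is itself a bijection (every directed edge has a unique successor and a unique predecessor), so every orbit is a finite directed cycle in the functional graph, i.e.\ a directed closed trail. The rest is bookkeeping. This argument is classical and appears in the references cited just before the theorem; I expect no substantive obstacle.
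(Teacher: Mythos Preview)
Your proof is correct and is essentially the same argument as the paper's. The paper double counts pairs $(O,P)$ with $O$ an Eulerian orientation and $P$ an \emph{associated} Eulerian partition (meaning each trail of $P$ is a directed closed walk in $O$); since such $P$ are in bijection with your tuples $\sigma=(\sigma_v)$ of in/out bijections, your pairs $(O,\sigma)$ and the paper's pairs $(O,P)$ are the same objects under a different name, and both counts proceed identically.
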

\begin{proof}
Say that an Eulerian partition $P$ and an Eulerian orientation $O$ are \textit{associated} if each trail in~$P$ is a directed closed walk in~$O$. In Figure~\ref{fig:pairing} we give an example of an Eulerian orientation and an associated edge pairing.
\begin{figure}[ht]
 \[ \includegraphics[scale=0.55]{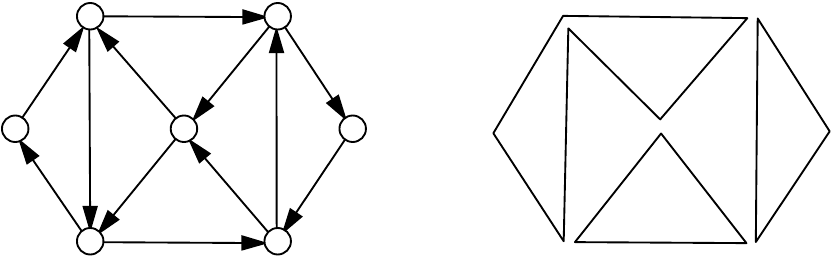} \]
 \caption{An Eulerian orientation and one of its associated Eulerian partitions.\label{fig:pairing}}
\end{figure}

Let $N$ be the number of pairs $(O,P)$, where $O$ is an Eulerian orientation and $P$ is an associated Eulerian partition.
Given $O$, all the associated Eulerian partitions are obtained from
a bijection at each vertex between the in-coming and out-going edges.  That is,
\[
   N = \EO(G)\; \prod_{i=1}^n\, (d_i/2)!\,.
\]
Conversely, given an Eulerian partition $P$, the number of associated Eulerian orientations is clearly $2^{\card P}$, so
\[
   N = \sum_{P\in\calP(G)} 2^{\card P}.
\]
In view of the definitions 
\eqref{def:eo} and \eqref{def:Pauling}, combining these counts gives the theorem.
\end{proof}

In Figure~\ref{fig:components} we show the distribution of $\card P$ for two graphs with 256 vertices.
$Q_8$ is the 8-dimensional hypercube (degree~8), while
$C_{16}\cart C_{16}$ is the two-dimensional square lattice
with periodic boundary conditions (degree~4).

\begin{figure}[ht]
\centering
\unitlength=1cm
  \begin{picture}(15,6.5)(0,0)
    \put(1,0.4){\includegraphics[scale=0.3]{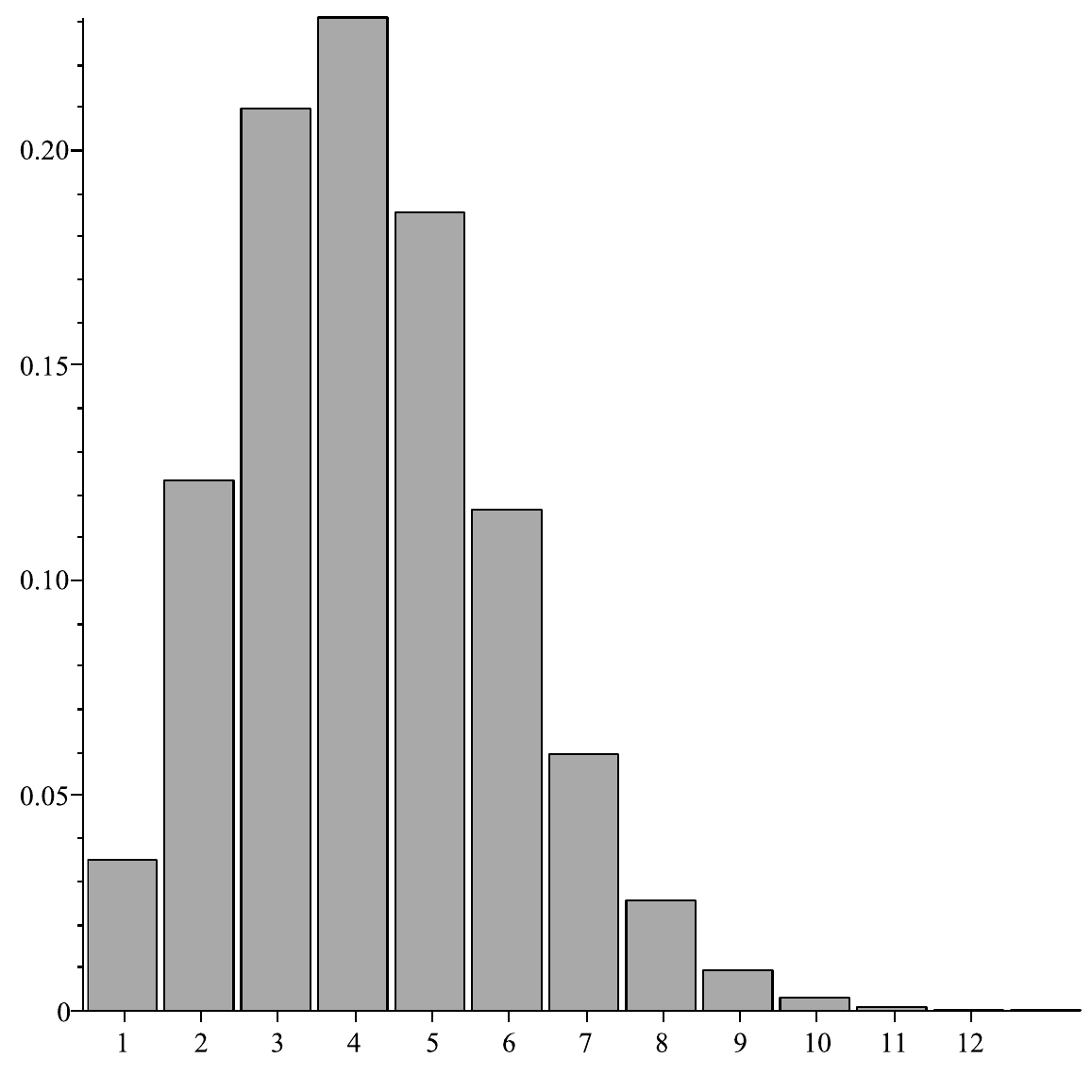}}
    \put(8,0.4){\includegraphics[scale=0.3]{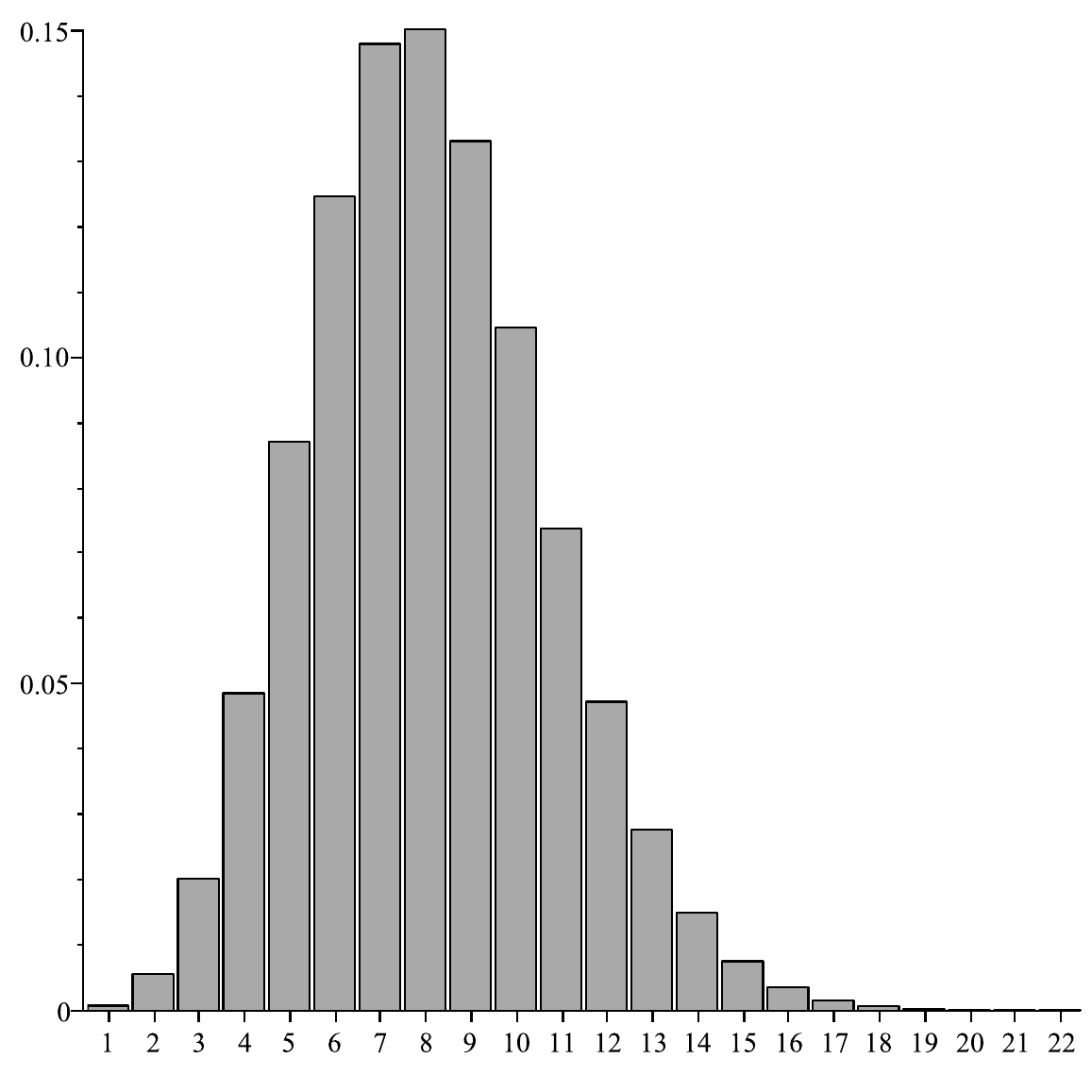}}
    \put(2.9,0.1){$Q_8$}
    \put(9.7,0.1){$C_{16}\cart C_{16}$}
  \end{picture}
  \caption{Distribution of $\card P$ for two 
  4-regular graphs on 256 vertices.\label{fig:components}}
\end{figure}

To apply Theorem~\ref{EOGvalue}, we can generate many edge partitions at random and calculate the average $2^{\card P}$.
With careful programming, each trial requires about $20\,\card{E(G)}$ nanoseconds.
Although $2^{\card P}$ is a highly skewed function, the average of a sequence of trials converges to a normal distribution as the number of trials increases.
In most cases, we found 1000 averages based on at least one million trials each, then from those 1000 averages we found the 2-sigma confidence interval for the mean.
The accuracy is better when $\card P$ is typically smaller, such as for higher degree or higher dimension.


\section{Products of cycles}\label{s:products}


In this section we consider simple graphs which
are Cartesian products of smaller graphs.
After some general theory we will focus on products of
cycles and their limits (infinite paths).
The most famous example is ``square ice'' (the two-dimensional square lattice)
which was the first periodic lattice whose residual entropy was determined exactly~\cite{lieb1967residual}.
In Section~\ref{s:cliques} we will consider a
different example that allows us to exemplify the
accuracy of our estimates when the degree increases.

Suppose $G$ and $H$ are two graphs.  The \textit{Cartesian product} $G\cart H$ has vertex set $V(G)\times V(H)$,
with $(u,v)$ adjacent to $(u',v')$ if either $u=u'$ and $v$ is adjacent to $v'$ in~$H$, or $v=v'$ and $u$ is adjacent to $u'$ in~$G$.

In order to compare the residual entropy with our
estimate $\ours$ defined in \eqref{ours}, we first consider the tree entropy.
Let $G$ be a simple graph with $n$ vertices. 
Recall the definition of the Laplacian matrix
$L(G)$ from Section~\ref{s:newbound}.

The next simple lemma gives 
the spanning tree count of graph products
in terms of eigenvalues of the Laplacian matrices.

\begin{lemma}\label{producttrees}
Let $G$ and $H$ be two connected graphs 
on $\ell$ and $m$ vertices, respectively.
Let $0=\mu_0, \mu_1, \ldots, \mu_{\ell-1}$
and $0=\mu'_0,\mu'_1, \ldots, \mu'_{m-1}$
be the eigenvalues of the Laplacian
matrices $L(G)$ and $L(H)$, respectively.
Then the number of spanning trees in the Cartesian product $G \cart H$~is
\[
t(G \cart H) = \Dfrac{1}{\ell m}
\prod_{\substack{0\le j <\ell,\, 0\le k <m \\[0.3ex] j+k\ne 0} }
 \!\!(\mu_j + \mu'_k).
\]
\end{lemma}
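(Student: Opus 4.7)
The plan is to prove the lemma via the spectral form of the Matrix Tree Theorem, together with the well-known Kronecker-sum description of the Laplacian of a Cartesian product.

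First, I would show that $L(G\cart H) = L(G)\otimes I_m + I_\ell \otimes L(H)$. This follows directly from the adjacency structure of $G\cart H$: the degree of $(u,v)$ is $d_G(u)+d_H(v)$, and $(u,v)\sim (u',v')$ only if exactly one coordinate changes and the corresponding edge exists in the respective factor. Writing the Laplacian block-wise with blocks indexed by vertices of $G$ of size $m\times m$ makes the decomposition transparent: the diagonal contribution splits into the $d_G(u)I_m$ part (giving $L(G)\otimes I_m$ up to off-diagonal corrections) and the $L(H)$ part, while the off-diagonal blocks carry the $G$-adjacencies tensored with $I_m$.

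Next, I would diagonalise both factors: let $\{e_j\}$ and $\{f_k\}$ be orthonormal eigenbases of $L(G)$ and $L(H)$ with eigenvalues $\mu_j$ and $\mu'_k$. Then the vectors $e_j\otimes f_k$ form an orthonormal eigenbasis of $L(G)\otimes I_m + I_\ell\otimes L(H)$ with eigenvalue $\mu_j+\mu'_k$, because
\[
(L(G)\otimes I_m + I_\ell\otimes L(H))(e_j\otimes f_k) = (\mu_j+\mu'_k)(e_j\otimes f_k).
\]
Thus the spectrum of $L(G\cart H)$ is exactly $\{\mu_j+\mu'_k : 0\le j<\ell,\, 0\le k<m\}$ with multiplicity.

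Finally, I would apply the Matrix Tree Theorem in its spectral form: for any connected graph $F$ on $N$ vertices, $t(F) = \frac{1}{N}\prod_{\lambda\ne 0}\lambda$, where the product runs over the nonzero Laplacian eigenvalues. Since $G$ and $H$ are both connected, each of $L(G)$ and $L(H)$ has a single zero eigenvalue ($\mu_0=\mu'_0=0$), and hence $G\cart H$ is also connected with a single zero Laplacian eigenvalue $\mu_0+\mu'_0$, corresponding to $(j,k)=(0,0)$. Dividing by $N=\ell m$ and excluding this single zero factor gives exactly the claimed formula.

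There is no serious obstacle here; the only thing to check carefully is that the product of eigenvalues is indexed correctly and that no other $\mu_j+\mu'_k$ vanishes, which is automatic since all other $\mu_j,\mu'_k$ are strictly positive by connectedness of $G$ and $H$.
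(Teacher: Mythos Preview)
Your proposal is correct and follows essentially the same approach as the paper: identify $L(G\cart H)$ as the Kronecker sum $L(G)\otimes I_m + I_\ell\otimes L(H)$, read off the eigenvalues $\mu_j+\mu'_k$, and apply the spectral Matrix Tree Theorem. The paper is terser, citing a reference for the eigenvalue claim where you explicitly verify it via the tensor eigenbasis $e_j\otimes f_k$, but the argument is the same.
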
 

\begin{proof}
By the Matrix Tree Theorem,
$\ell m\, t(G\cart H)$ is equal to the product of
the non-zero eigenvalues of $L(G\cart H)$.
In fact,
\[
   L(G\cart H) = I_\ell\otimes L(H) + L(G)\otimes I_m,
\]
where $\otimes$ is the matrix tensor product,
from which it follows that the
$\ell m$ eigenvalues of $L(G\cart H)$ are $\mu_j+\mu'_k$ 
for $0\le j<\ell$ and $0\le k<m$; see for example~\cite{Merris1998}.
\end{proof}

\subsection{Products of two cycles}\label{s:twocycles}

Let $C_m$ denote the cycle with $m$ vertices.
It is well known that the eigenvalues of $L(C_m)$ are
$2-2\cos\dfrac{2\pi j}{m}$ for $0\le j\le m-1$.

\begin{lemma}\label{tubetrees}
For $m\ge 3$,
\begin{align*}
    \lim_{\ell\to\infty} \tau(C_m\cart C_\ell)
  & = \Dfrac{1}{m}\sum_{j=1}^{m-1} \log \,g\Bigl(\Dfrac{2\pi j}{m}\Bigr), \text{~where} \\
  g(y) &= 2-\cos y + \sqrt{\cos^2 y-4\cos y+3}.
\end{align*}
Also, $\lim_{\ell,m\to \infty} \tau(C_m\cart C_\ell) \approx 1.1662436$.
\end{lemma}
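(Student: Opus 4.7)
The plan is to apply Lemma~\ref{producttrees} with the well-known eigenvalues $\mu_j(C_n) = 2-2\cos(2\pi j/n)$ of the cycle's Laplacian, then take logarithms and pass to a Riemann-sum limit as $\ell \to \infty$. Writing $\mu_j = \mu_j(C_m)$ and $\mu'_k = \mu_k(C_\ell)$, Lemma~\ref{producttrees} gives
\[
   \tau(C_m\cart C_\ell) = -\Dfrac{\log(m\ell)}{m\ell} + \Dfrac{1}{m\ell}\sum_{(j,k)\ne(0,0)}\log(\mu_j+\mu'_k).
\]
I would split the double sum by whether $j=0$. The $j=0$ row contributes $\sum_{k=1}^{\ell-1}\log\mu'_k = 2\log\ell$, using $\prod_{k=1}^{\ell-1}\mu'_k = \ell\cdot t(C_\ell) = \ell^2$ by the matrix-tree theorem applied to $C_\ell$ itself. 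After dividing by $m\ell$, both this term and the prefactor $-\log(m\ell)/(m\ell)$ vanish in the limit.

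For each fixed $j\in\{1,\ldots,m-1\}$, the function $y\mapsto\log(\mu_j+2-2\cos y)$ is continuous on $[0,2\pi]$ because $\mu_j>0$, so $\frac{1}{\ell}\sum_{k=0}^{\ell-1}\log(\mu_j+\mu'_k)$ converges to $\frac{1}{2\pi}\int_0^{2\pi}\log(\mu_j+2-2\cos y)\,dy$. I would then invoke the classical identity (a corollary of Jensen's formula, obtained by factoring $a-z-z^{-1}$ on the unit circle)
\[
   \Dfrac{1}{2\pi}\int_0^{2\pi}\log(a-2\cos y)\,dy = \log\Dfrac{a+\sqrt{a^2-4}}{2}\qquad(a\ge 2).
\]
Substituting $a = 4-2\cos(2\pi j/m)$ and simplifying $(4-2\cos y)^2-4 = 4(\cos^2 y-4\cos y+3)$ shows the right-hand side equals $\log g(2\pi j/m)$, establishing the first claim after summing over $j$.

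For the second claim, a second application of Riemann-sum convergence is needed: since $g(0)=g(2\pi)=1$ and $\log g(y) = O(|y|)$ near each endpoint, $\log g$ is continuous and bounded on $[0,2\pi]$, so $\frac{1}{m}\sum_{j=1}^{m-1}\log g(2\pi j/m) \to \frac{1}{2\pi}\int_0^{2\pi}\log g(y)\,dy$ as $m\to\infty$. This double integral equals $\frac{1}{(2\pi)^2}\int_0^{2\pi}\int_0^{2\pi}\log(4-2\cos x-2\cos y)\,dx\,dy$, which is the well-studied spanning tree entropy of the two-dimensional integer lattice $\mathbb{Z}^2$, known to equal $4G/\pi$ with $G$ Catalan's constant, numerically $\approx 1.1662436$. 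The only technical nuisance in the whole argument is the bookkeeping that shows the two defect terms vanish, so I do not expect any substantial obstacle.
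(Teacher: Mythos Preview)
Your proposal is correct and follows essentially the same approach as the paper: apply Lemma~\ref{producttrees} with the cycle Laplacian eigenvalues, replace the sum over the $C_\ell$ index by an integral, and evaluate that integral via the classical identity (the paper states it in the equivalent form $\int_0^1\log(1-a\cos 2\pi x)\,dx=\log\frac{1+\sqrt{1-a^2}}{2}$), then handle the second claim by integrating $\log g$ and citing the Catalan-constant evaluation. Your treatment is actually a bit more careful than the paper's, in that you separate out the $j=0$ row explicitly and check that its contribution vanishes, whereas the paper simply asserts that the sum may be replaced by an integral.
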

\begin{proof}
By Lemma~\ref{producttrees}
\[
  \log t(C_m \cart C_\ell) = -\log(\ell m) +  
\sum_{\substack{0\le j <\ell,\, 0\le k <m \\[0.3ex] j+k\ne 0}}
\log\Bigl(
4 - 2 \cos \dfrac{2\pi j}\ell - 2\cos \dfrac{2\pi k}m \Bigr).
\]
As $\ell\to\infty$, the sum over $j$ can be replaced by an integral, using
\[
  \int_0^1 \log(1-a\cos(2\pi x))\,dx
   = \log\Dfrac{1+\sqrt{1-a^2}}{2}
 \quad(\abs a\le 1).
\]
This gives the first claim after some elementary manipulation.
The second claim comes from integrating $\log g(y)$ and
is exactly $\dfrac 4\pi$ times Catalan's constant~\cite{glasser2005entropy}.
\end{proof}

The value of $\lim_{m\to\infty} \eo(C_m\cart C_m)$ was famously determined by Lieb \cite{lieb1967residual} in 1967 to be exactly $\log\dfrac{8\sqrt 3}{9}\approx 0.43152$.
This compares poorly to Pauling's estimate $0.40547$
but very well to our estimate
$\lim_{m\to\infty} \ours(C_m\cart C_m)\approx 0.43054$.

\begin{table}[ht!]
\centering
\begin{tabular}{|c|c|c|c|}
\hline
$G$ & $\tau(G)$ & $\eo(G)$  & $\ours(G)$ \\ 
\hline\hline
$C_{3}\cart C_\infty$ &  1.04453 &  0.46210 &  0.49140 \\
$C_{4}\cart C_\infty$ &  1.09917 &  0.46299 &  0.46408 \\
$C_{5}\cart C_\infty$ &  1.12373 &  0.44216 &  0.45180 \\
$C_{6}\cart C_\infty$ &  1.13687 &  0.44577 &  0.44523 \\
$C_{7}\cart C_\infty$ &  1.14472 &  0.43690 &  0.44130 \\
$C_{8}\cart C_\infty$ &  1.14979 &  0.43960 &  0.43877 \\
$C_{9}\cart C_\infty$ &  1.15326 &  0.43477 &  0.43703 \\
$C_{10}\cart C_\infty$ &  1.15574 &  0.43672 &  0.43579 \\
$C_{11}\cart C_\infty$ &  1.15757 &  0.43369 &  0.43488 \\
$C_{12}\cart C_\infty$ &  1.15895 &  0.43514 &  0.43419 \\
$C_{13}\cart C_\infty$ &  1.16003 &  0.43308 &  0.43365 \\
$C_{14}\cart C_\infty$ &  1.16089 &  0.43418 &  0.43322 \\
$C_{15}\cart C_\infty$ &  1.16158 &  0.43269 &  0.43287 \\
$C_{16}\cart C_\infty$ &  1.16215 &  0.43356 &  0.43259 \\
\hline
\end{tabular}
\caption{Parameters for $C_m\cart C_\ell$ as $\ell\to\infty$
\label{table:tubes}}
\end{table}

\begin{figure}[ht!]
\centering
\unitlength=1cm
\begin{picture}(13,8)(0,0)
\put(0,0){\includegraphics[scale=0.63]{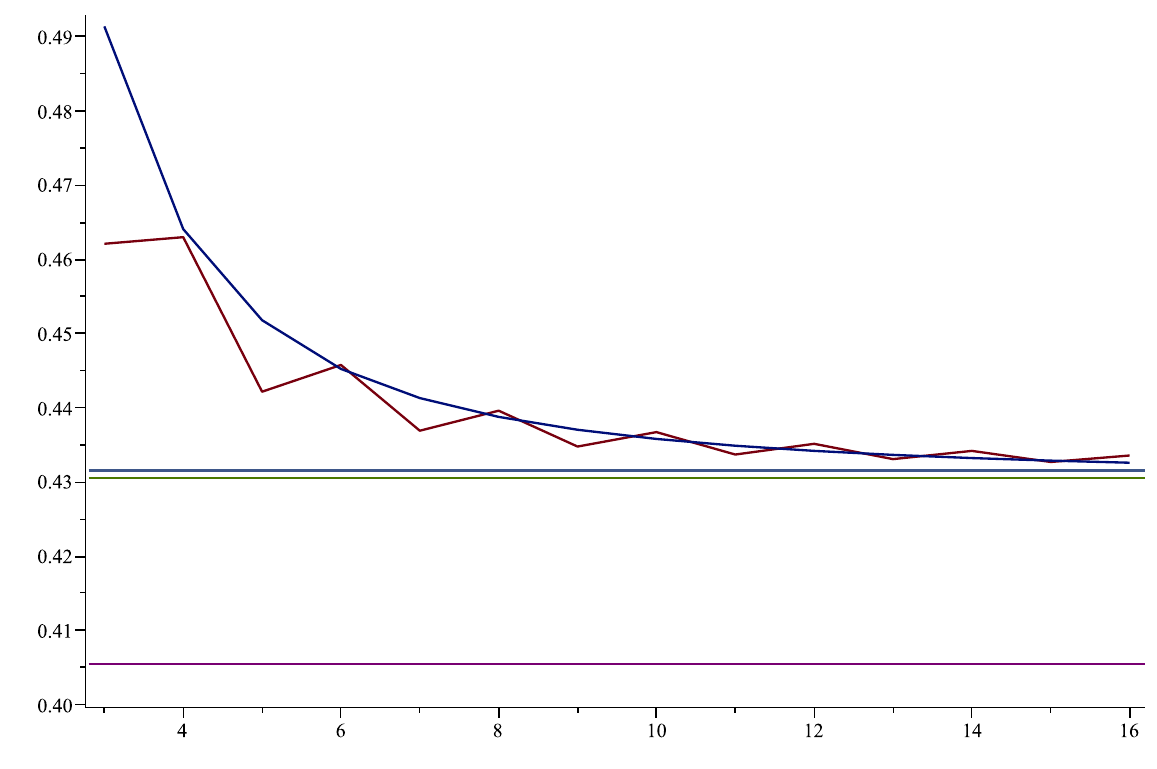}}
\put(1.2,4.3){\small exact $\eo$}
\put(1.9,6.5){\small $\ours$ estimate}
\put(6,-0.1){$m$}
\put(1.2,2.7){\small exact limit (Lieb)}
\put(1.2,3.35){\small limit of $\ours$}
\put(1.2,1.3){\small Pauling}
\end{picture}
\caption{Exact and estimated residual
 entropies for tubes $C_m\cart C_\infty$
    \label{fig:enter-label}}
\end{figure}

To further illustrate the usefulness of our estimate,
we considered the case where the square lattice is finite in one direction; i.e., a square lattice on an infinitely long cylinder.
For $3\le m\le 14$, we obtained precise values of 
$\lim_{\ell\to\infty} \eo(C_m\cart C_\ell)$ using the transfer matrix method
which we describe in the following theorem.
These values and the corresponding estimates $\ours$ are presented in Table~\ref{table:tubes} and Figure~\ref{fig:enter-label}.
It is seen that $\ours$ tracks $\eo$ very well as $m$ increases, especially for large~$m$.

\begin{thm}\label{thm:transfer}
    Let $G$ be an Eulerian graph with vertices $\{1,\ldots,n\}$.
    For $\zvec=(z_1,\ldots,z_n)\in\Integers^n$,
    define $N_G(\zvec)$ to be the number of
    orientations of $G$ such that $\dout(v)-\din(v)=z_v$
    for $1\le v\le n$, where $\din(v),\dout(v)$ are the in-degree and
    out-degree of vertex~$v$.
    Define the $2^n\times 2^n$ matrix $T=(t_{\xvec,\yvec})$, whose rows
    and columns are indexed by $\xvec,\yvec\in\{0,1\}^n$, by
    $t_{\xvec,\yvec}=N_G(2(\yvec-\xvec))$.
    Then
    \[
       \lim_{\ell\to\infty} \eo(G\cart C_\ell) = \dfrac1n\log\lambda,
    \]
    where $\lambda$ is the largest eigenvalue of~$T$.
    Furthermore, let $\varGamma$ be the group action on $\{0,1\}^n$
    induced by the automorphism group of $G$ acting on the coordinate
    positions, together with the
    involution $\xvec\mapsto(1,\ldots,1)-\xvec$.
    Suppose this action has orbits $O_1,\ldots,O_m$.
    Define the $m\times m$ matrix $S=(s_{i,j})$ where $s_{i,j}$ is the
    common row sum of the submatrix of $T$ induced by rows~$O_i$ and
    columns~$O_j$.
    Then $S$ has the same largest eigenvalue~$\lambda$.
\end{thm}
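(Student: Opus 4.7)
The plan is the transfer-matrix method, with the ``layers'' being the $\ell$ copies of $G$ indexed by the vertices of $C_\ell$, and the state between consecutive layers being the orientation of the vertical edges joining them, encoded as a vector in $\{0,1\}^n$.

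First I would parameterise an Eulerian orientation of $G\cart C_\ell$ by a cyclic sequence $\xvec^{(0)},\ldots,\xvec^{(\ell-1)}\in\{0,1\}^n$, where $x^{(i)}_v=1$ means the vertical edge from $(v,i)$ to $(v,i+1)$ is oriented forward. A short count shows that the vertical edges contribute exactly $2(x^{(i)}_v-x^{(i-1)}_v)$ to $\dout(v,i)-\din(v,i)$, so the Eulerian condition forces the $G$-edges at layer $i$ to realise the imbalance $2(\xvec^{(i-1)}-\xvec^{(i)})$ at each vertex. The number of such $G$-orientations is $N_G(2(\xvec^{(i-1)}-\xvec^{(i)}))$, and because reversing every arc is an $N_G$-preserving bijection this equals $N_G(2(\xvec^{(i)}-\xvec^{(i-1)}))$, which matches the sign convention in the theorem. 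Summing over cyclic sequences and recognising the product as a trace yields
\[
    \EO(G\cart C_\ell) = \operatorname{tr}(T^\ell).
\]
Since $G$ is Eulerian, $t_{\xvec,\xvec}=N_G(\mathbf{0})=\EO(G)>0$, so $T$ has positive diagonal; standard non-negative matrix asymptotics then give $\frac 1\ell\log\operatorname{tr}(T^\ell)\to\log\rho(T)=\log\lambda$, and dividing by $n$ proves the first claim.

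For the quotient reduction, the first step is to check that $T$ is $\varGamma$-invariant. For $\sigma\in\Aut(G)$ this is immediate by relabelling coordinates; for the involution $\xvec\mapsto\mathbf{1}-\xvec$ it reduces to $N_G(-\zvec)=N_G(\zvec)$, the same arc-reversal symmetry used above. Because $T$ is $\varGamma$-invariant and the orbits $O_1,\ldots,O_m$ partition $\{0,1\}^n$, the row sum $\sum_{\yvec\in O_j}t_{\xvec,\yvec}$ depends only on the orbit of $\xvec$, which makes $s_{ij}$ well-defined. Letting $B$ be the $2^n\times m$ matrix whose $j$-th column is the indicator of $O_j$, this is precisely the identity $TB=BS$. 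Consequently every eigenpair $(\mu,\vvec)$ of $S$ lifts to the eigenpair $(\mu,B\vvec)$ of $T$, giving $\rho(S)\le\rho(T)=\lambda$. For the reverse inequality, Perron--Frobenius provides a non-negative eigenvector $\vvec$ of $T$ with eigenvalue $\lambda$; averaging, $\vvec^{\ast}:=\sum_{g\in\varGamma}g\vvec$ is a $\varGamma$-invariant non-negative eigenvector of $T$ for the same eigenvalue, and $\vvec^{\ast}\neq 0$ because any positive entry of $\vvec$ at some $\xvec$ forces $\vvec^{\ast}$ to be positive on the whole orbit of $\xvec$. Being constant on orbits, $\vvec^{\ast}$ descends to an eigenvector of $S$ with eigenvalue $\lambda$, so $\rho(S)=\lambda$.

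The only place where care is needed is the convergence $\frac 1\ell\log\operatorname{tr}(T^\ell)\to\log\rho(T)$. The upper bound $\operatorname{tr}(T^\ell)\le 2^n\rho(T)^\ell$ is immediate; a matching lower bound follows because the positive diagonal of $T$ rules out periodicity on each irreducible component, so the component containing the Perron eigenvector contributes $\Omega(\rho(T)^\ell)$ to the trace for all large $\ell$. Beyond this point the argument is routine linear algebra, and I anticipate no further obstacle.
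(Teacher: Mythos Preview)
Your proposal is correct and follows essentially the same line as the paper's proof. The paper delegates the first claim (the transfer-matrix identity and the limit $\frac1n\log\lambda$) to Lieb's classical treatment and only sketches the quotient reduction, whereas you spell out the layer decomposition, the trace identity $\EO(G\cart C_\ell)=\operatorname{tr}(T^\ell)$, and the aperiodicity argument explicitly; for the second part both you and the paper establish the $\varGamma$-invariance $t_{\gamma\xvec,\gamma\yvec}=t_{\xvec,\yvec}$, average a Perron eigenvector over $\varGamma$ to obtain an orbit-constant eigenvector, and lift eigenvectors of $S$ back to $T$. Your formulation via the intertwining relation $TB=BS$ is a clean way to package the lifting step, and your care with non-negativity (rather than strict positivity) of the Perron eigenvector is appropriate since $T$ is block-diagonal over the level sets $\{\sum_v x_v=\text{const}\}$ and hence not irreducible.
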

\begin{proof}
  The rationale for $T$ was described by Lieb~\cite{lieb1967residual}
  and we will be content with sketching a proof of the last part.
  Note that, by symmetry and converse,
  $t_{\xvec^\gamma,\yvec^\gamma}=t_{\xvec,\yvec}$ for $\xvec,\yvec\in\{0,1\}^n$
  and $\gamma\in\varGamma$.
  This implies that if $\vvec$ is a positive eigenvector of $T$ with eigenvalue
  $\lambda$, then so is~$\vvec^\gamma$.
  By averaging over $\gamma\in\varGamma$, we find a non-zero eigenvector of~$T$
  corresponding to eigenvalue $\lambda$ which takes a constant value
  $r_i$ on each orbit~$O_i$.
  Then $(r_1,\ldots,r_n)$ is an eigenvector of~$S$ with eigenvalue~$\lambda$.
  Conversely, any eigenvector of $S$ becomes an eigenvector of $T$ with the
  same eigenvalue on replicating its value in each orbit.
\end{proof}
The advantage of the matrix $S$ is that it can be much smaller than $T$.
For example, the final row in Table~\ref{tab:3cycles} below has $T$ matrix
of order 1,048,576 but its $S$ matrix has order 7,456.

\nicebreak
\subsection{Products of three cycles}
 
Define $R_m=C_m\cart C_m\cart C_m$ to be a finite simple
cubic lattice with periodic boundary conditions.
We did not find an estimate of the residual entropy of this
lattice in the literature.
From \eqref{eq:Sch} and \eqref{eq:LVupper} we have
\[
     0.91623 \le \lim_{m\to\infty} \eo(R_m) \le 1.0925.
\]
From \cite{rosengren1987number} we have
$\lim_{m\to\infty}\tau(R_{m})\approx 1.67338$,
so our estimate is $\lim_{m\to\infty}\ours(R_m)=0.9251$.

\begin{figure}[ht!]
\centering
\unitlength=1cm
\begin{picture}(12.5,8.5)(0,0)
\put(0,0){\includegraphics[scale=0.60]{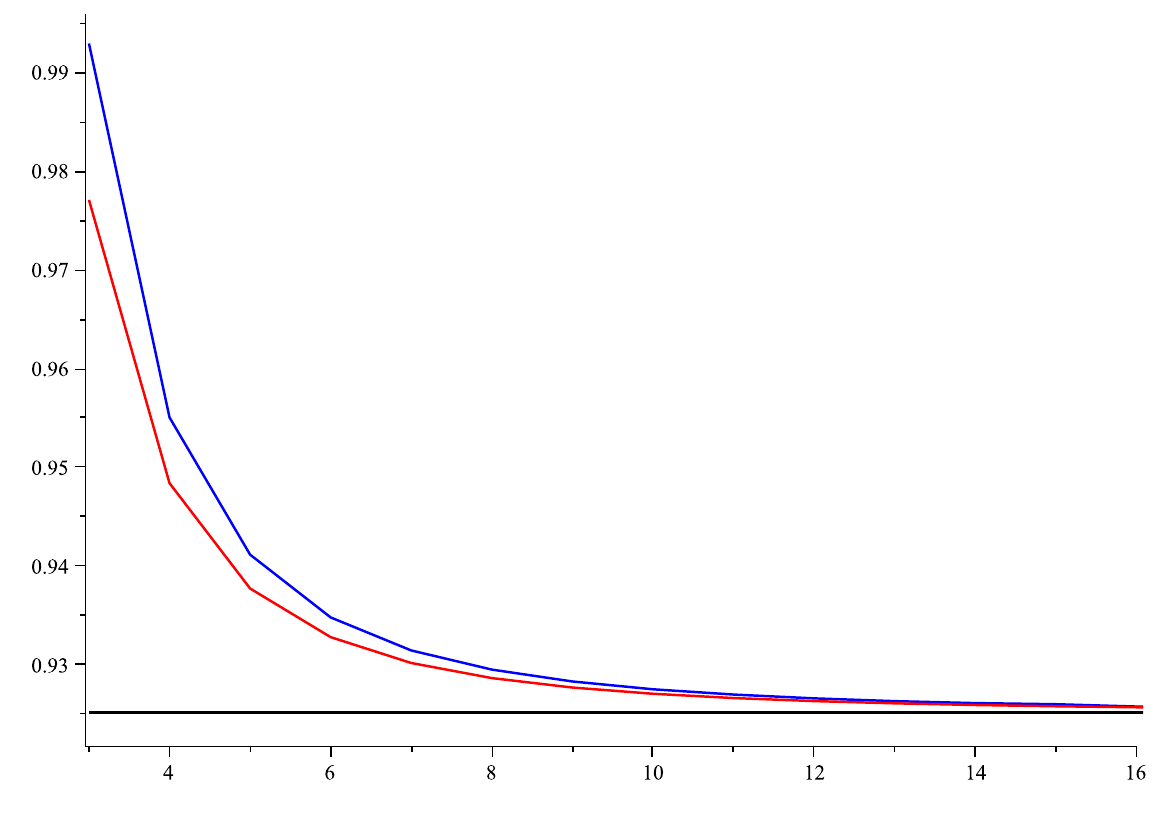}}
\put(1.7,5.0){\small $\eo(R_m)$}
\put(1.1,2.1){\small $\ours(R_m)$}
\put(6,-0.1){$m$}
\put(1.2,1.2){\small limit of $\ours$}
\end{picture}
\caption{Exact and estimated residual
 entropies for the simple cubic lattice
 $R_m=C_m\cart C_m\cart C_m$.\label{fig:simplecubic}}
\end{figure}

To judge the accuracy of our estimate, we computed values
of $\eo(R_m)$ up to $m=20$ using the method of Section~\ref{s:numeric}.
Precise values become difficult to obtain past $n=16$ (4096 vertices).
In Figure~\ref{fig:simplecubic} we compare $\eo(R_m)$ to $\ours(R_m)$ and again observe good correlation
between $\ours$ and~$\eo$.
Using rational extrapolation of $\eo(R_m)$, we believe that
\[
    \lim_{m\to\infty} \eo(R_m) = 0.9252\pm 0.0002.
\]
In other words, we cannot distinguish between $\lim_{m\to\infty} \eo(R_m)$
and $\lim_{m\to\infty} \ours(R_m)$.

 \begin{table}[ht!]
       \centering
     \medskip
     \begin{tabular}{ |c|c|c|c|c| }
     \hline
   $G$ & $\tau(G)$ & $\eo(G)$ & $\ours(G)$
         \\ \hline\hline
    $C_3 \cart C_3 \cart C_\infty$   
         & 1.61344 & 0.95055 & 0.95511 \\
    $C_3 \cart C_4 \cart C_\infty$  
         & 1.63332 & 0.94486 & 0.94521  \\
    $C_3 \cart C_5 \cart C_\infty$ 
         & 1.64164 & 0.93930 & 0.94101 \\
    $C_3 \cart C_6 \cart C_\infty$ 
         & 1.64605 & 0.93857 & 0.93881 \\
    $C_4 \cart C_4 \cart C_\infty$ 
         & 1.64941 & 0.93703 & 0.93713 \\
    $C_4 \cart C_5 \cart C_\infty$ 
         & 1.65593 & 0.93382 & 0.93387 \\
         \hline
     \end{tabular}
     \caption{Parameters for $\lim_{n\to\infty} C_m\cart C_\ell\cart C_n$\label{tab:3cycles}}
 \end{table}

\medskip
We also show in Table~\ref{tab:3cycles} results for the Cartesian product of three cycles where two of the cycles have small fixed length.
We computed $\tau(G)$ using a simple extension of Lemma~\ref{tubetrees}, and $\eo(G)$ using the transfer matrix method.
Note that $\paul(G)=0.91629$ in all cases.
 

\nicebreak
\section{Cycles of cliques}\label{s:cliques}

In this section, we find the residual entropy of cycles of cliques which are products $K_m \cart C_\ell$.
Our reason for studying this family is that increasing~$m$ allows us to test Conjecture~\ref{Conj} and to observe how $\ours$ and $\eo$ are related as the degree increases.
Throughout this section, we assume that $m$ is odd since the number of Eulerian orientations is zero otherwise.  
\begin{figure}[ht!]
    \[
    \includegraphics[scale=0.35]{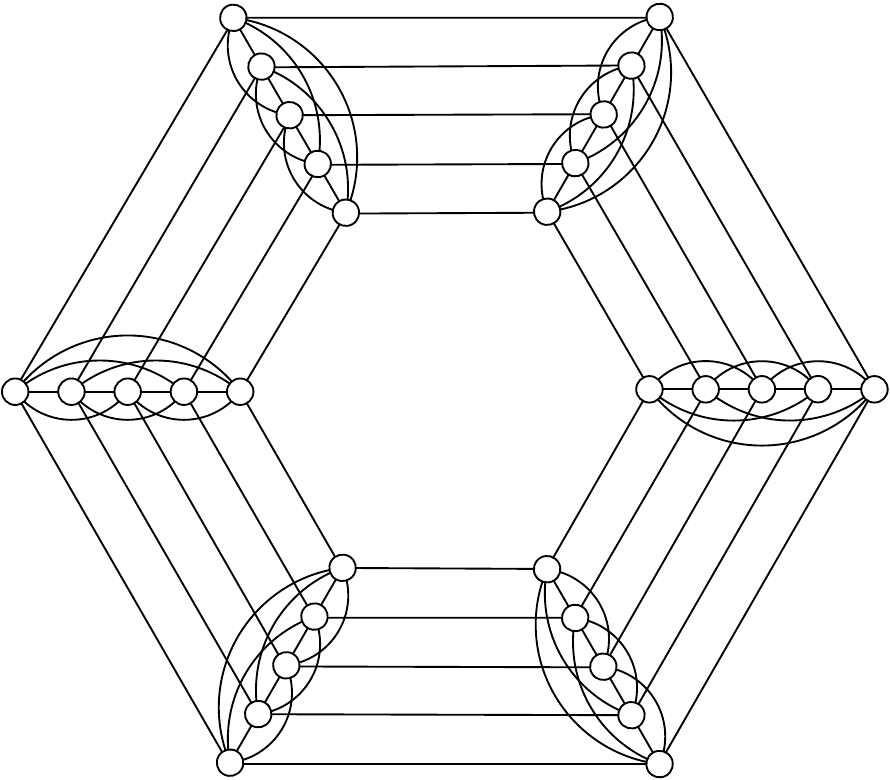}
  \]
   \caption{The Cartesian product $K_5\cart C_6$}
\end{figure}

\subsection{Residual entropy}

Recall that $\RT(m)= \EO(K_m)$ is the number of regular tournaments.

\begin{thm}\label{T:cycle-clique}
 If  $n = m \ell \to \infty$ and $m$ is odd then 
\[ 
    \eo(K_m \cart C_{\ell}) =    \Dfrac{1}{m}\log \biggl( \RT(m+2) \Bigm/\binom{m+1}{(m{+}1)/2}\biggr) + O\Bigl(\Dfrac{\log m}{m\ell}\Bigr).
\]
\end{thm}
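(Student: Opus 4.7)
The plan is to use the transfer matrix method along $C_\ell$: encode the orientations of the $m$ inter-layer edges at each of the $\ell$ interfaces by a vector $s\in\{0,1\}^m$ (with $s[v]=1$ meaning the edge at vertex $v$ points forward along the cycle), and define $T_{s,s'}$ as the number of orientations of a single $K_m$-copy that, given $s$ below and $s'$ above, leave every vertex Eulerian. This forces the out-degree within $K_m$ at $v$ to be $d_v:=(m-1)/2+s[v]-s'[v]$, and $\EO(K_m\cart C_\ell)=\operatorname{tr}(T^\ell)$. Since $\sum_v d_v=\binom{m}{2}$ forces $|s|=|s'|$, there is a block decomposition $T=\bigoplus_{k=0}^m T_k$ indexed by $k=|s|=|s'|$. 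The $S_m$-symmetry of $K_m$ shows that every row sum of $T_k$ equals a common value $r_k$, which is therefore the Perron eigenvalue of $T_k$, with eigenvector~$\mathbf{1}$.

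The key identity $r_{(m-1)/2}=\RT(m+2)/\binom{m+1}{(m+1)/2}$ comes from a bijection with Eulerian orientations of $K_{m+2}$. Write $K_{m+2}=K_m\cup\{a,b\}$ with all $\{a,b\}$-to-$K_m$ edges and the edge $ab$. From an Eulerian orientation of $K_{m+2}$, read off $\alpha_v:=\mathbf{1}[v\to a]$, $\beta_v:=\mathbf{1}[v\to b]$, $\epsilon:=\mathbf{1}[a\to b]$, and set $s[v]:=1-\alpha_v$, $s'[v]:=\beta_v$. The Eulerian constraint at $\{1,\ldots,m\}$ translates exactly into the required out-degree constraint on $K_m$, while the constraints at $a,b$ force $|s|=|s'|=(m+1)/2-\epsilon\in\{(m-1)/2,(m+1)/2\}$. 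This map is a bijection, so
\[
\RT(m+2)=\sigma_{(m-1)/2}+\sigma_{(m+1)/2},\qquad\sigma_k:=\binom{m}{k}\,r_k.
\]
Reversing all edges of the $K_m$-orientation and complementing both $s$ and $s'$ gives a bijection between the two summands, so $\sigma_{(m-1)/2}=\sigma_{(m+1)/2}$; combining with $2\binom{m}{(m-1)/2}=\binom{m+1}{(m+1)/2}$ yields the identity.

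To show $r_{(m-1)/2}$ is the largest $r_k$, for an orientation $O$ of $K_m$ let $n_+,n_0,n_-$ count vertices of out-degree $(m+1)/2,(m-1)/2,(m-3)/2$. The sum constraint gives $n_+=n_-$, so $n_0=m-2n_+$ is odd, and $O$ contributes $\binom{n_0}{k-n_+}$ to $\sigma_k$. The elementary ratio
\[
\frac{\binom{n_0}{k+1-n_+}\binom{m}{k}}{\binom{n_0}{k-n_+}\binom{m}{k+1}}=\frac{(m-n_+-k)(k+1)}{(k-n_++1)(m-k)}
\]
is $\ge 1$ iff $n_+(m-2k-1)\ge 0$, so the per-orientation function $k\mapsto\binom{n_0}{k-n_+}/\binom{m}{k}$ is unimodal with maximum on $\{(m-1)/2,(m+1)/2\}$. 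Summing over $O$ gives $r_k\le r_{(m-1)/2}$ for every $k$, so the spectral radius of $T$ is $r_{(m-1)/2}$.

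Finally, the sandwich $2\,r_{(m-1)/2}^\ell\le\operatorname{tr}(T^\ell)\le 2^m\, r_{(m-1)/2}^\ell$ (the lower bound from the two blocks $k=(m\pm1)/2$ each contributing its Perron eigenvalue, the upper from counting all $2^m$ eigenvalues) gives $\eo(K_m\cart C_\ell)=\frac{1}{m}\log r_{(m-1)/2}+O(1/\ell)$, which already matches the stated error rate for bounded $m$. To obtain the uniform bound $O(\log m/(m\ell))$ as $m\to\infty$, one needs to quantify the previous unimodality step into a geometric decay of $r_k/r_{(m-1)/2}$ in $|k-(m-1)/2|$ and then control $\sum_k\binom{m}{k}(r_k/r_{(m-1)/2})^\ell$ by a polynomial in~$m$. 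This quantitative refinement is the main obstacle; the combinatorial identification of the spectral radius is otherwise routine bookkeeping.
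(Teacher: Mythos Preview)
Your approach is essentially the paper's---both decompose by the net flow across the cycle and identify the dominant term with $\RT(m+2)/\binom{m+1}{(m+1)/2}$. Your bijective identification of $r_{(m-1)/2}$ and the per-orientation unimodality argument for $r_k\le r_{(m-1)/2}$ are clean alternatives to the paper's switching-based monotonicity (its Lemma~\ref{regisbest}), and arguably more transparent.

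There are, however, two genuine gaps, and both are repaired by a single ingredient that you do not invoke.

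First, the lower bound $\operatorname{tr}(T_k^\ell)\ge r_k^\ell$ is not justified by ``each block contributes its Perron eigenvalue''. The matrix $T_k$ is symmetric with strictly positive entries, but it can still have negative eigenvalues, and for odd $\ell$ these subtract from the trace; e.g.\ $\bigl(\begin{smallmatrix}1&3\\3&1\end{smallmatrix}\bigr)$ has cube-trace $56<64$. The paper closes the analogous step by quoting the McKay--Wang asymptotics for nearly-regular tournaments (its Lemma~\ref{L:tournaments}), which give $T_{s,s'}=\RT(m)\,e^{O(1)}$ \emph{uniformly} over all $s,s'$. Since $r_k$ is the row sum $N_k$ times the average entry, this yields $\operatorname{tr}(T_k^\ell)=N_k\,r_k^{\ell-1}\cdot\bar T=r_k^\ell\,e^{O(1)}$ in both directions.

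Second, your upper bound $\operatorname{tr}(T^\ell)\le 2^m r_{(m-1)/2}^\ell$ only gives error $O(1/\ell)$, which does \emph{not} imply $O(\log m/(m\ell))$ when $m\to\infty$. Your proposed remedy (quantifying the unimodality into geometric decay of $r_k/r_{(m-1)/2}$) is not what the paper does and would be harder. The same McKay--Wang input instead gives $\operatorname{tr}(T_k^\ell)\le e^{O(1)}r_k^\ell$ uniformly in~$k$ (rather than $\binom{m}{k}r_k^\ell$); combined with your inequality $r_k\le r_{(m-1)/2}$ and summing over the $O(m)$ admissible values of~$k$, this yields $\operatorname{tr}(T^\ell)\le m\,e^{O(1)}r_{(m-1)/2}^\ell$, hence the error $O(\log m/(m\ell))$. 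So both gaps close with the tournament-count asymptotics, not with a sharpening of the unimodality step.
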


We start the proof with a sequence of auxiliary lemmas.

\begin{lemma}\label{L:tournaments}
Let $\fvec \in \{-2,0,2\}^m$ be such that $\sum_{i \in [m]} f_i = 0$.
Let $\NT(m,\fvec)$ denote the number of tournaments with $m$ vertices that $\fvec$ is the vector of  differences of out-degrees and in-degrees.  
Then,
\[
    \NT(m,\fvec)= 
    \Dfrac{2}{\binom{m+1}{(m+1)/2}^2}\,
    e^{c(\fvec)} \RT(m+2),
\]
where $|c(\fvec)| \le 2 + O(m^{-1})$ uniformly over such $\fvec$ as $m \to \infty$.
\end{lemma}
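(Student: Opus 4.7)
My plan is to combine a combinatorial extension argument with a Gaussian-integral estimate. First I observe that $\NT(m,\fvec)$ is invariant under permutations of the coordinates of $\fvec$, so it depends only on $k:=|\{i:f_i=2\}|=|\{i:f_i=-2\}|$; write $N_k:=\NT(m,\fvec)$. To connect these to $\RT(m+2)$, I consider a regular tournament $T$ on $V\sqcup\{u,v\}$ with $|V|=m$. At each $w\in V$, regularity of $T$ forces the imbalance $f_w$ of $T|_V$ to equal $2-2(\mathbf{1}_{w\to u}+\mathbf{1}_{w\to v})\in\{-2,0,2\}$. Conversely, given $T'$ on $V$ with imbalance $\fvec$, all edges from $\{u,v\}$ to $A(\fvec)\cup C(\fvec)$ are forced, and the only remaining freedom is a factor of $2$ for the orientation of the edge $uv$ together with a factor $\binom{m-2k}{(m-2k+1)/2}$ for choosing which half of $Z(\fvec)$ beats $u$ rather than $v$ (here $m-2k$ is odd because $m$ is). Summing over $\fvec$ yields
\[
\RT(m+2) \;=\; \sum_k N_k \binom{m}{k,k,m-2k}\cdot 2\binom{m-2k}{(m-2k+1)/2}.
\]

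Next I would establish the identity
\[
\sum_k \binom{m}{k,k,m-2k}\binom{m-2k}{(m-2k+1)/2} \;=\; \binom{m}{(m+1)/2}^2 \;=\; \tfrac14 \binom{m+1}{(m+1)/2}^2,
\]
by partitioning $[m]$ into an ``out-set'' $A\cup B$ of size $(m+1)/2$ (containing $A$) and its complement $C\cup R$ of size $(m-1)/2$ (containing $C$), then applying Vandermonde's identity to $\sum_k\binom{(m+1)/2}{k}\binom{(m-1)/2}{k}$. Combined with the target formula $N_k = \frac{2e^{c_k}}{\binom{m+1}{(m+1)/2}^2}\RT(m+2)$, this is precisely the normalisation that forces the weighted average of $e^{c_k}$ to be $1$, consistent with but weaker than the claimed uniform bound on $|c_k|$.

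The main task is to bound $|c_k|$ uniformly. I would use the standard contour-integral representation
\[
\NT(m,\fvec) \;=\; 2^{\binom{m}{2}}(2\pi)^{-m}\int_{[-\pi,\pi]^m} e^{i\sum_{j} f_j\theta_j/2}\prod_{j<\ell}\cos\tfrac{\theta_j-\theta_\ell}{2}\,d\thetavec,
\]
obtained by setting $x_j=e^{i\theta_j}$ in $[\prod_j x_j^{(m-1-f_j)/2}]\prod_{j<\ell}(x_j+x_\ell)$. Both the phase (since $\sum f_j=0$) and the product are invariant under the diagonal shift $\theta_j\mapsto\theta_j+s$, so after the substitution $\phi_j=\theta_j-\theta_m$ one obtains an $(m{-}1)$-dimensional integral. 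A Gaussian approximation $\prod\cos\approx\exp(-\tfrac18\thetavec^{\top}L\thetavec)$ near the origin, with $L=mI-J$ the Laplacian of $K_m$, combined with Gaussian integration against the source $i\fvec/2$, produces a factor $\exp(-\tfrac12\fvec^{\top}L^{+}\fvec)=\exp(-4k/m)$, using $L^{+}\fvec=\fvec/m$ for $\fvec\perp\vec 1$ and $\|\fvec\|^2=8k$. The $\fvec$-independent prefactor matches $\RT(m)$ to within $1+O(m^{-1})$, and a short computation using \eqref{eq:RTvalue} together with Stirling for $\binom{m+1}{(m+1)/2}$ yields $\RT(m)\binom{m+1}{(m+1)/2}^2/(2\RT(m+2)) = e(1+O(m^{-1}))$, hence $c(\vec 0)=1+O(m^{-1})$ and $c(\fvec)=1-4k/m+O(m^{-1})$. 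Since $k\in[0,m/2]$, this is at most $1$ in absolute value, comfortably within $2+O(m^{-1})$.

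The principal obstacle is making the Gaussian approximation rigorous uniformly in $\fvec$: one needs tail control of the integrand on $[-\pi,\pi]^m\setminus B(\vec 0,\delta)$ that does not depend on $\fvec$. Since $|e^{i\sum f_j\theta_j/2}|=1$, the tail estimate reduces to the $\fvec=\vec 0$ case, and the bound $|\cos x|\le\exp(-\tfrac12\min\{|x|,\pi-|x|\}^2)$ used in Section~\ref{s:newbound}, together with the fourth-order expansion $\log\cos x=-\tfrac12 x^2+O(x^4)$ underlying the proof of Theorem~\ref{T:IMZ} specialised to $K_m$, suffices.
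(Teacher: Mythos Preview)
Your proposal is correct and shares the two essential ingredients with the paper's proof: the asymptotic
\[
   \NT(m,\fvec)=\RT(m)\exp\Bigl(-\tfrac{1}{2m}\textstyle\sum_i f_i^2+O(m^{-1})\Bigr)
   =\RT(m)\,e^{-4k/m+O(m^{-1})},
\]
and the combinatorial extension by two extra vertices $u,v$ linking $\NT(m,\fvec)$ to $\RT(m+2)$.
The paper, however, combines them much more economically.  It simply \emph{cites} the displayed asymptotic (McKay--Wang, 1996) rather than re-deriving it via the Gaussian integral, and then uses the extension argument not merely as a consistency check but as the device that eliminates $\RT(m)$: since $\RT(m+2)$ is a sum of $\tfrac12\binom{m+1}{(m+1)/2}^2$ terms each equal to $\RT(m)e^{c'}$ with $c'\in[-2,0]+O(m^{-1})$, one immediately gets
\[
   c(\fvec)=-\tfrac{1}{2m}\textstyle\sum_i f_i^2-c'+O(m^{-1})\in[-2,2]+O(m^{-1}),
\]
with no recourse to~\eqref{eq:RTvalue} or Stirling.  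Your route instead computes $\RT(m)\binom{m+1}{(m+1)/2}^2/(2\RT(m+2))=e\,(1+O(m^{-1}))$ externally from the asymptotic formula~\eqref{eq:RTvalue}; this is more work but buys you the sharper conclusion $c(\fvec)=1-4k/m+O(m^{-1})$, hence $|c(\fvec)|\le 1+O(m^{-1})$.  Your refined decomposition of $\RT(m+2)$ by $k$ and the accompanying Vandermonde identity are correct and pleasant, but the paper's cruder count of $\tfrac12\binom{m+1}{(m+1)/2}^2$ extensions already suffices for the stated bound.
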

\begin{proof}
  From \cite[Theorem 4.4]{McKayW1996}, we find that 
  \begin{equation}\label{NTmf}
    \NT(m,\fvec) = \RT(m)\exp\biggl(-\Dfrac{\sum_{i \in [m]}f_i^2}{2m} + O(m^{-1})\biggr).
  \end{equation}
  Note that $\dfrac{\sum_{i \in [m]}f_i^2}{2m}  \in [0,2]$.

  Next, consider a tournament $T$ with vertices $V\cup\{u,v\}$,
  where $\card V=m$. Let $T'$ be the subtournament of $T$ induced by~$V$.
  There are $\binom{m+1}{(m+1)/2}^2/2$ ways to choose
  the neighbours of $u$ and $v$ so that their in-degrees
  and out-degrees are equal.
  For each of those cases, a particular
  $\fvec(T')\in\{-2,0,2\}^m$ is necessary and sufficient
  for~$T$ to be regular.
  That is, $\RT(m+2)$ is the sum of 
  $\binom{m+1}{(m+1)/2}^2/2$ terms, each having
  value given by~\eqref{NTmf}. 
  This completes the proof.
\end{proof}

Now we consider a cycle of cliques $K_m\cart C_\ell$.
Take a clockwise cyclic
ordering of the cliques $K_{m}^1, \ldots, K_{m}^{\ell}$.
Given any orientation $D$ of the cycle of cliques, we define the \textit{net flow} $f_{(i-1) \to i}(D)$
as the difference of the number of clockwise arcs 
and anticlockwise arcs
between two consecutive cliques $K_{m}^{i-1}$ and $K_{m}^{i}$
in $D$ for all $i-1, i$ modulo~$\ell$.

\begin{lemma}
\label{lm:flow}
If $D$ is an Eulerian orientation of $K_m \cart C_\ell$ then,
for all $i, j\in [\ell]$,
\[ 
f_{(i-1) \to i}(D) = f_{(j-1) \to j}(D).
\] 
\end{lemma}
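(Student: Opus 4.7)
The plan is to obtain the claim from a single identity: that the net flow out of any individual clique $K_m^i$ into its two cyclic neighbours is zero. Iterating this around the cycle then forces the net flow $f_{(i-1)\to i}(D)$ to be independent of $i$.

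First, I would fix an index $i \in [\ell]$ and apply the Eulerian condition in aggregate over all vertices of $K_m^i$. Summing $\dout(v) = \din(v)$ over $v \in V(K_m^i)$ and observing that every arc of $D$ lying inside $K_m^i$ contributes $1$ to each side (it leaves one vertex of $K_m^i$ and enters another), we conclude that the total number of arcs of $D$ entering $K_m^i$ from outside equals the total number of arcs leaving $K_m^i$ to outside.

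Next, I would parametrise the external arcs at $K_m^i$. The only edges of $K_m\cart C_\ell$ with exactly one endpoint in $K_m^i$ are the $m$ matching edges joining $K_m^{i-1}$ to $K_m^i$ and the $m$ matching edges joining $K_m^i$ to $K_m^{i+1}$. Write $c_i$ (respectively $a_i$) for the number of these edges between $K_m^{i-1}$ and $K_m^i$ that are oriented clockwise (respectively anticlockwise) in $D$, so that $f_{(i-1)\to i}(D) = c_i - a_i$, and analogously $c_{i+1}, a_{i+1}$ for the edges between $K_m^i$ and $K_m^{i+1}$. The balance derived in the previous step reads
\[
    c_i + a_{i+1} \;=\; a_i + c_{i+1},
\]
which rearranges to $c_{i+1} - a_{i+1} = c_i - a_i$, i.e.\ $f_{i \to (i+1)}(D) = f_{(i-1)\to i}(D)$.

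Finally, since this identity holds for every $i \in [\ell]$ (with indices taken modulo $\ell$), a trivial induction around the cycle yields $f_{(i-1)\to i}(D) = f_{(j-1)\to j}(D)$ for all $i,j \in [\ell]$. There is no real obstacle: the whole argument is a one-line consequence of contracting each clique $K_m^i$ to a single vertex and applying the fact that contraction preserves the Eulerian property, so that the resulting multigraph on $\ell$ vertices with $m$ parallel edges between consecutive vertices must inherit an Eulerian orientation, which on a cycle forces a constant net flow.
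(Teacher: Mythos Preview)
Your proof is correct and follows essentially the same approach as the paper: both sum the Eulerian condition over the vertices of a single clique $K_m^i$ to conclude that the net flow in equals the net flow out, giving $f_{(i-1)\to i}(D)=f_{i\to(i+1)}(D)$ for every~$i$. The paper states this identity in one line via $\sum_{v\in K_m^i} d^+(v)-\sum_{v\in K_m^i} d^-(v)=0$, whereas you unpack it with the $c_i,a_i$ notation, but the content is identical.
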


\begin{proof}
It suffices to show that 
$f_{(i-1)\to i}(D)
= f_{i \to (i+1)}(D)$ for all $i \in [\ell]$.
Using that $D$ is Eulerian, we observe that
\[
f_{i \to (i+1)}(D) - f_{(i-1) \to i}(D)
= 
\sum_{v \in K_{m}^i}  d^+(v) 
- \sum_{v \in K_{m}^i} d^-(v) = 0,
\]
where $d^+$ and $d^-$ denote the out-degree and in-degree of a vertex 
in $D$, respectively.
\end{proof}

We introduce a class $A_{m,f}$ of digraphs,
for $m,f$ odd and $|f|\le m$. These digraphs correspond to
an Eulerian orientation of $K_m\cart C_\ell$ induced on the clique $K_m^{i}$ and with previous and subsequent cliques replaced by two vertices $s$ and $t$.
Such graphs have $m+2$ vertices including these two special vertices. There is no directed edge between $s$ and $t$, but exactly one directed edge between each other pair. 
Vertices $v\ne s,t$ have $d^-(v)=d^+(v)$, while
$d^+(s)=d^-(t)=\dfrac{m+f}{2}$ and
$d^-(s)=d^+(t)=\dfrac{m-f}{2}$.
Next, we study the rates of decrease of the quantity
$a_{m,f}:=\card{A_{m,f}}$
decreases with respect to $f$, using  the following lemma. 
\begin{lemma}\label{switchlemma}
  Suppose a bipartite graph $G$ with two parts $X,Y$ has no isolated vertices.
  Suppose that there is a constant $c$ such that $d(x)\le cd(y)$ for every edge $xy$ with $x\in X,y\in Y$.
  Then $|Y|\le c\,|X|$.
\end{lemma}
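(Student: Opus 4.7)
The plan is to reduce the claim to a weighted double count of the edges of $G$, with each edge weighted by the reciprocal of one of its endpoint degrees.

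Concretely, first I would use the no-isolated-vertex hypothesis to ensure $d(v) \ge 1$ for all $v \in X \cup Y$, so that the reciprocals below are well defined. Next, I would observe the two identities
\[
   |X| \;=\; \sum_{x \in X} 1 \;=\; \sum_{x \in X} \sum_{y \st xy \in E(G)} \Dfrac{1}{d(x)} \;=\; \sum_{xy \in E(G)} \Dfrac{1}{d(x)},
\]
and, by the symmetric calculation on the $Y$ side,
\[
   |Y| \;=\; \sum_{xy \in E(G)} \Dfrac{1}{d(y)}.
\]
These identities are just a rewriting of the trivial sums $\sum_{x} d(x)/d(x)$ and $\sum_{y} d(y)/d(y)$, but they expose $|X|$ and $|Y|$ as comparable quantities summed over the same index set.

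The hypothesis $d(x) \le c\,d(y)$ for every edge $xy$ gives $1/d(y) \le c/d(x)$ edge by edge, so summing over all edges of $G$ yields
\[
   |Y| \;=\; \sum_{xy \in E(G)} \Dfrac{1}{d(y)} \;\le\; \sum_{xy \in E(G)} \Dfrac{c}{d(x)} \;=\; c\,|X|,
\]
which is the required inequality.

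There is no real obstacle: the only subtlety is spotting that reciprocal-degree weighting is the right choice, since it converts the vertex counts $|X|$ and $|Y|$ into sums over a common edge set where the hypothesis can be applied term by term. The no-isolated-vertex assumption is used exactly once, to justify the substitution $1 = \sum_{y \st xy \in E(G)} 1/d(x)$ for every $x \in X$ (and symmetrically for $Y$).
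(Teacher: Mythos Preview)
Your proof is correct and is essentially identical to the paper's own proof: both express $|X|$ and $|Y|$ as the edge sums $\sum_{xy} 1/d(x)$ and $\sum_{xy} 1/d(y)$, then apply the hypothesis $d(x)\le c\,d(y)$ term by term. The paper's version is simply more terse, compressing the argument into a single displayed line.
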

\begin{proof}
We have
\[
 |Y| = \sum_{xy\in G} \Dfrac{1}{d(y)}
    \le c\sum_{xy\in G} \Dfrac{1}{d(x)} = c\,|X|
\]
as required.
\end{proof}

\begin{lemma}\label{regisbest}
For odd $m,f$ with $|f|\le m$,\\
(a) $a_{m,-f} = a_{m,f}$.\\[0.6ex]
(b) If $f\ge 3$ then
\[  \Dfrac{a_{m,f}}{\binom{m}{(m+f)/2}} \le     \Dfrac{a_{m,f-2}}{\binom{m}{(m+f-2)/2}}
\le \Dfrac{a_{m,1}}{\binom{m}{(m+1)/2}}
= \Dfrac{\RT(m+2)}{ \binom{m+1}{(m+1)/2}}.
 \]
\end{lemma}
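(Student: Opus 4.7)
My plan is to handle the three claims separately, since they require different techniques.

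For part (a), the involution ``reverse every arc'' sends $A_{m,f}$ onto $A_{m,-f}$: the out-degree of $s$ and the in-degree of $t$ become $(m-f)/2$, while the $v_i$ remain balanced. For the final equality in part (b), I would observe that adjoining the arc $t\to s$ to any $D\in A_{m,1}$ yields a regular tournament on $\{s,t\}\cup[m]$ (both $s$ and $t$ then have out- and in-degree $(m+1)/2$), and similarly adjoining $s\to t$ to $A_{m,-1}$ recovers those regular tournaments with the opposite $st$-arc. Together with part (a) this gives $a_{m,1}=\RT(m+2)/2$, and the identity $\binom{m+1}{(m+1)/2}=2\binom{m}{(m+1)/2}$ (Pascal + symmetry) then yields $a_{m,1}/\binom{m}{(m+1)/2}=\RT(m+2)/\binom{m+1}{(m+1)/2}$.

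For the main inequality in part (b), the plan is to invoke Lemma~\ref{switchlemma}. I would set up a bipartite graph $H$ with parts $X=A_{m,f-2}$ and $Y=A_{m,f}$, where $D'\in X$ is joined to $D\in Y$ whenever there is a vertex $v\in[m]$ with $s\to v$ and $v\to t$ in $D$ such that $D'$ is obtained from $D$ by reversing both of these arcs. This is a legitimate switching: $v$ remains balanced while $d^+(s)$ and $d^-(t)$ each decrease by one, so the net flow drops from $f$ to $f-2$. Writing $a(D):=|\{u:s\to u,\,u\to t\text{ in }D\}|$ and $d'(D'):=|\{u:u\to s,\,t\to u\text{ in }D'\}|$, these are precisely the degrees in $H$; and different choices of $v$ produce different images, so there are no multi-edges.

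The technical heart is the identity $a(D)=d'(D')+(f-1)$ for every edge $(D',D)$. To verify it, I would first derive from the degree equations at $s$ and $t$ that $a'(D')-d'(D')=f-2$ in $A_{m,f-2}$ (in parallel with $a-d=f$ in $A_{m,f}$). Then, for $u\ne v$ the arcs between $\{s,t\}$ and $u$ are untouched by the switching, so the sets of such $u$ that are ``switchable'' agree in $D$ and $D'$; while at $u=v$ one is switchable in $D$ but not in $D'$. This gives $a(D)=1+a'(D')=d'(D')+(f-1)$. With the identity in hand, the required bound $d'(D')/a(D)\le (m-f+2)/(m+f)$ simplifies to the trivial $d'(D')\le (m-f+2)/2=d^-(s)$. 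Since $a(D)\ge f>0$ there are no isolated vertices in $Y$, and isolated vertices in $X$ can be discarded, so Lemma~\ref{switchlemma} delivers $a_{m,f}(m+f)\le a_{m,f-2}(m-f+2)$, which is exactly the first inequality of part (b) once rewritten using $\binom{m}{(m+f)/2}/\binom{m}{(m+f-2)/2}=(m-f+2)/(m+f)$; iterating down to $f=1$ gives the second.

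The main obstacle is establishing the clean identity $a(D)=d'(D')+f-1$; without it one only gets the crude bound $a(D)\le(m+f)/2$, which loses the correct binomial ratio. The identity requires careful bookkeeping distinguishing the switching vertex $v$ from all the other vertices, leveraging the fact that ``switchable'' and ``switchable-back'' are mutually exclusive at each $u$ and that the two states coincide between $D$ and $D'$ precisely away from $v$.
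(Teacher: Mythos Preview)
Your proposal is correct and follows essentially the same switching argument as the paper. For part~(a) the paper swaps the labels of $s$ and $t$ rather than reversing all arcs, and for part~(b) the paper partitions $[m]$ into four cells $V_1,\ldots,V_4$ (your $a(D)=|V_1|$ and $d'(D')=|V_4(D)|+1$) and reads off the degree identity $d(y)=|V_1|-f+1$ directly, but this amounts to the same relation $a(D)=d'(D')+f-1$ that you derive; the ratio bound, the binomial identification, and the final equality are handled identically.
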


\begin{proof}
Part (a) is proved by interchanging the roles of $s$ and~$t$.

We next prove the first inequality of part (b).
Define a bipartite graph $G$ with parts $A_{m,f}$ and $A_{m,f-2}$.
For $x\in A_{m,f}$ and $y\in A_{m,f-2}$, $x$ is adjacent to $y$ if $y$ is obtained by reversing a directed path of length~2 from~$s$ to~$t$.
Consider such an edge~$xy$.
The vertex set $V(x)\setminus \{s,t\}$ can be partitioned into four parts:
$V_1$ is adjacent from $s$ and to $t$; $V_2$, with $\dfrac{m+f}{2}-\card{V_1}$ vertices, is adjacent from both $s$ and~$t$; $V_3$, with $\dfrac{m+f}{2}-\card{V_1}$ vertices, is adjacent to both $s$ and~$t$; and finally $V_4$, with $\card{V_1}-f$ vertices, is adjacent to $s$ and from~$t$.
Note that $\card{V_1}=\card{V_4}+f>0$ so $G$ has no isolated vertices in the first part, and the same argument with the parts interchanged shows that there are no isolated vertices in the second part either.

Directed paths of length 2 from $s$ to $t$ correspond to vertices in $V_1$.
Reversing one path takes us to~$y$, where there are $\card{V_4}+1$ directed paths from $t$ to $s$.
Thus, $d(x)=\card{V_1}$ and $d(y)=\card{V_4}+1=\card{V_1}-f+1$.

Lemma~\ref{switchlemma} now gives us
\[
   \Dfrac{a_{m,f}}{a_{m,f-2}}
   \le
   \max_{A_{m,f}} \Dfrac{\card{V_1}-f+1}{\card{V_1}}.
\]
This expression is increasing with $\card{V_1}$,
which is at most $\dfrac{m+f}{2}$.
Observe that the ratio of the corresponding binomials is also $\frac{m-f+2}{m+f}$. This  establishes the claimed monotonicity of 
$ \frac{a_{m,f}}{\binom{m}{(m+f)/2}}$  with respect to $f$. 

To establish claim we observe that any digraph from $A_{m,1}$ corresponds to a regular tournament on $m+2$ vertices with a specified direction between $s$ and $t$. Thus,
\[ 
    \Dfrac{a_{m,1}}{\binom{m}{(m+1)/2}} = 
    \Dfrac{\RT(m+2)}{2\binom{m}{(m+1)/2}} =    \Dfrac{\RT(m+2)}{ \binom{m+1}{(m+1)/2}},
\]
as required.
\end{proof}

Now we are ready to  establish the residual entropy of a cycle of cliques.
\begin{proof}[Proof of Theorem \ref{T:cycle-clique}]
Let $\EO_f(m,\ell)$ denote the number of Eulerian orientations of $K_m\cart C_{\ell}$ with net flow $f \in [-m,m]$. 
Note that from Lemma \ref{lm:flow} we know that the flow between any two consecutive cliques is the same.
Therefore,
\begin{equation}\label{flow-sum}
    \EO(K_m\cart C_\ell) = \sum_{\substack{|f|\le m\\\text{$f$ is odd}} } \EO_f(m,\ell).
\end{equation}
Any orientation with flow $f$
can be represented as a sequence of $\ell-1$ choices of    $D^{i}\in A_{m,f}$, $i =1,\ldots,\ell-1$,
and then a choice of orientations of the edges of the last clique $K_m^{\ell}$ with specified differences of out and in degrees from $\{-2,0,2\}$. Note that  
the orientations of edges of $t$-vertices of $D^i \in A_{m,f}$   should match the orientations
of the $s$-vertices of $D^{i+1}$ so we need to adjust the count
by a factor $\binom{m}{(m-f)/2}$ for the choices of $D^{i+1}$ given $D^i$.
Therefore, 
\[
    \EO_f(m,\ell) = \biggl(\Dfrac{a_{m,f}^{\ell-1}}{\binom{m}{(m{-}f)/2}}\biggr)^{\ell-1} \binom{m}{(m{-}f)/2} N_f(m,\ell),
\]
where $N_f(m,\ell)$ is an average number of choices for 
the orientations of the edges  of  the last clique $K_m^{\ell}$ given the orientations of all other edges of $K_m\cart C_{\ell}$. 
Using Lemma \ref{L:tournaments} and Lemma~\ref{regisbest}, we obtain
\[
    \EO_f(m,\ell)  \le \EO_1(m,\ell) e^{2 + O(m^{-1})} 
      =  
    \biggl(\Dfrac{\RT(m+2)}{ \binom{m+1}{(m+1)/2}}\biggr)^{\ell} e^{O(1)}.
\]
Substitution this bound into  \eqref{flow-sum}, we find that
\[
   \biggl(\Dfrac{\RT(m+2)}{ \binom{m+1}{(m+1)/2}}\biggr)^{\ell} e^{O(1)}
    \le  \EO(K_m\cart C_\ell) \le m \biggl(\Dfrac{\RT(m+2)}{ \binom{m+1}{(m+1)/2}}\biggr)^{\ell} e^{O(1)}.
\]
Taking the logarithm and dividing by the number of vertices $n = m \ell$ completes the proof.
\end{proof}


\subsection{Pauling's estimate and the tree-entropy correction}
A cycle of cliques $K_m\cart C_\ell$ has degree $m+1$. Therefore, Pauling's estimate \eqref{def:Pauling} is 
\[
\paul(K_m\cart C_\ell) = \log \binom{m+1}{(m+1)/2} - \Dfrac{m+1}{2} \log 2. 
\]
In Table~\ref{tab:KmCell} we compare the exact residual entropy given by Theorem \ref{T:cycle-clique}  with Pauling's estimate \eqref{def:Pauling} and our estimate \eqref{ours} for $m$ up to $35$.
 The exact values of $\RT(m+2)$ are taken from \cite[Table 1]{IMZ}. 
Recall  that our estimate $\ours$ requires the spanning tree entropy, which is given by the next lemma.


\begin{table}[ht!]
 \centering
 \begin{tabular}{|c|c|c|c|c|c|}
 \hline
   \vrule width0pt height2ex depth1ex 
   $G$ & degree & $\tau(G)$ & $\eo(G)$ & $\paul(G)$ & $\ours(G)$ \\
   \hline \hline
$K_{3}\cart C_\infty$ & 4 & 1.04453 & 0.46210 & 0.40547 & 0.49140 \\
$K_{5}\cart C_\infty$ & 6 & 1.53988 & 0.97656 & 0.91629 & 0.99189 \\
$K_{7}\cart C_\infty$ & 8 & 1.87255 & 1.53422 & 1.47591 & 1.54351 \\
$K_{9}\cart C_\infty$ & 10 & 2.12402 & 2.11892 & 2.06369 & 2.12514 \\
$K_{11}\cart C_\infty$ & 12 & 2.32634 & 2.72190 & 2.66983 & 2.72635 \\
$K_{13}\cart C_\infty$ & 14 & 2.49561 & 3.33800 & 3.28887 & 3.34134 \\
$K_{15}\cart C_\infty$ & 16 & 2.64109 & 3.96395 & 3.91748 & 3.96655 \\
$K_{17}\cart C_\infty$ & 18 & 2.76862 & 4.59755 & 4.55347 & 4.59963 \\
$K_{19}\cart C_\infty$ & 20 & 2.88213 & 5.23725 & 5.19532 & 5.23896 \\
$K_{21}\cart C_\infty$ & 22 & 2.98438 & 5.88195 & 5.84195 & 5.88337 \\
$K_{23}\cart C_\infty$ & 24 & 3.07739 & 6.53079 & 6.49253 & 6.53199 \\
$K_{25}\cart C_\infty$ & 26 & 3.16268 & 7.18313 & 7.14646 & 7.18416 \\
$K_{27}\cart C_\infty$ & 28 & 3.24143 & 7.83847 & 7.80324 & 7.83936 \\
$K_{29}\cart C_\infty$ & 30 & 3.31457 & 8.49640 & 8.46249 & 8.49718 \\
$K_{31}\cart C_\infty$ & 32 & 3.38283 & 9.15658 & 9.12388 & 9.15727 \\
$K_{33}\cart C_\infty$ & 34 & 3.44682 & 9.81876 & 9.78718 & 9.81937 \\
$K_{35}\cart C_\infty$ & 36 & 3.50704 & 10.48270 & 10.45215 & 10.48325 \\
\hline
 \end{tabular}
 \caption{Residual entropy $\lim_{\ell\to\infty}\eo(K_m\cart C_\ell)$ compared to its estimates
 $\paul$ and $\ours$.\label{tab:KmCell}}
\end{table}
 
\begin{lemma}\label{L:tree-cycle-clique}
For $\ell,m\ge 3$, the number of spanning trees in $K_m\cart C_\ell$ is
\[
    t(K_m\cart C_\ell) = \Dfrac{\ell}{m} \Bigl( \Bigl(\Dfrac{\sqrt m+\sqrt{m+4}}{2}\Bigl)^{\!2\ell}
 +   \Bigl(\Dfrac{\sqrt m+\sqrt{m+4}}{2}\Bigl)^{\!-2\ell\,}\Bigr)^{m-1}.
\]
Moreover,
\[
   \tau(K_m\cart C_\ell)
     = \begin{cases}
     \dfrac{\log(\ell/m)}{\ell m} + \dfrac{2(m-1)}{m}\log \dfrac{\sqrt m+\sqrt{m+4}}{2} + O(\ell^{-1}m^{-2\ell}),
         &  \text{as \,$\ell m\to\infty$;} \\[1ex]
       \dfrac{\log(\ell/m)}{\ell m}+\log m - \dfrac{\log m-2}{m} + O(m^{-2}),
          &\text{as \,$m\to\infty$.}
       \end{cases}
\]
\end{lemma}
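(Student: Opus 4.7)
The plan is to apply Lemma~\ref{producttrees} directly with $G = K_m$ and $H = C_\ell$. The Laplacian $L(K_m) = mI - J$ has eigenvalue $0$ with multiplicity one and eigenvalue $m$ with multiplicity $m-1$, while $L(C_\ell)$ is circulant with eigenvalues $\mu'_k = 2 - 2\cos(2\pi k/\ell)$ for $0 \le k < \ell$. Splitting the double product of Lemma~\ref{producttrees} according to $j = 0$ versus $j \ge 1$, the $j = 0$ piece collapses, via the Matrix Tree Theorem applied to $C_\ell$ itself, to $\prod_{k=1}^{\ell-1}\mu'_k = \ell \cdot t(C_\ell) = \ell^2$, and the $j \ge 1$ piece contributes $\bigl(P(m+2)\bigr)^{m-1}$, where
\[
P(a) := \prod_{k=0}^{\ell-1}\bigl(a - 2\cos(2\pi k/\ell)\bigr).
\]
The prefactor $1/(m\ell)$ combines with $\ell^2$ to give the $\ell/m$ in front.

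The core step is the closed-form evaluation of $P(a)$. Writing $2\cos(2\pi k/\ell) = \omega^k + \omega^{-k}$ with $\omega = e^{2\pi i/\ell}$ and parametrising $a = \alpha + \alpha^{-1}$, I would use the factorisation
\[
a - \omega^k - \omega^{-k} = -\omega^{-k}(\alpha - \omega^k)(\alpha^{-1} - \omega^k)
\]
together with the standard identity $\prod_{k=0}^{\ell-1}(x - \omega^k) = x^\ell - 1$ to reduce $P(a)$ to an explicit expression in $\alpha^\ell + \alpha^{-\ell}$. For $a = m+2$ the defining quadratic $\alpha^2 - (m+2)\alpha + 1 = 0$ has positive root $\alpha = \lambda^2$ with $\lambda = (\sqrt m + \sqrt{m+4})/2$, and substituting gives the claimed formula for $t(K_m \cart C_\ell)$.

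For the asymptotics, I would exploit $\lambda > 1$; in fact one checks $\lambda^2 \ge m$ for $m \ge 1$, so $\lambda^{2\ell}$ dominates and
\[
\log\bigl(\lambda^{2\ell} + \lambda^{-2\ell}\bigr) = 2\ell\log\lambda + O(\lambda^{-4\ell}) = 2\ell\log\lambda + O(m^{-2\ell}).
\]
Dividing $\log t(K_m \cart C_\ell)$ by $n = m\ell$ yields the first expansion. For the second expansion, Taylor-expanding $\lambda = \sqrt m\,\bigl(1 + m^{-1} - m^{-2} + O(m^{-3})\bigr)$ gives $2\log\lambda = \log m + 2m^{-1} - 3m^{-2} + O(m^{-3})$, and multiplication by $(m-1)/m = 1 - 1/m$ produces $\log m - (\log m - 2)/m + O(m^{-2})$ as required.

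The only delicate point is the sign bookkeeping in the evaluation of $P(a)$ — specifically, showing that $\prod_{k=0}^{\ell-1}(-\omega^{-k}) = (-1)^\ell \omega^{-\ell(\ell-1)/2}$ equals $-1$ for both parities of $\ell$, a short case analysis. Everything else is routine manipulation with $x^\ell - 1$ and elementary Taylor expansions.
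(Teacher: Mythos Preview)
Your approach is correct and essentially coincides with the paper's: both apply Lemma~\ref{producttrees}, reduce to the product $\prod_{j=0}^{\ell-1}\bigl(m+2-2\cos(2\pi j/\ell)\bigr)$, and then evaluate it in closed form --- the paper by recognising it as $2(-1)^\ell T_{2\ell}(i\sqrt m/2)-2$ for the Chebyshev polynomial $T_{2\ell}$, you by the substitution $a=\alpha+\alpha^{-1}$, which is exactly the parametrisation underlying that Chebyshev identity. Your sign analysis and Taylor expansions are correct, so the two arguments differ only in packaging; yours is marginally more self-contained.
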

\begin{proof}
  The non-zero eigenvalues of $L(C_\ell)$ are
  $2-2\cos\bigl(\frac{2\pi j}{\ell}\bigr)$ for $1\le j\le\ell-1$, while the non-zero eigenvalues of $L(K_m)$ are all equal to~$m$.
  Therefore, we have from Lemma~\ref{producttrees} that
  \begin{align*}
    t(K_m\cart C_\ell) &= \ell m^{m-2} \prod_{j=1}^{\ell-1}\, \Bigl(m+2-2\cos\dfrac{2\pi j}{\ell}\Bigr)^{m-1} \\
       &= \ell m^{-1} T(m,\ell)^{m-1},
   \end{align*}
   where
   \[
        T(m,\ell) = \prod_{j=0}^{\ell-1} \, \Bigl(m+2-2\cos\dfrac{2\pi j}{\ell}\Bigr).
  \]
  The latter product can be recognised as $2(-1)^\ell T_{2\ell} (i m^{1/2}/2) - 2$, where
  $T_{2\ell}(x)$ is the Chebyshev polynomial in its standard normalization. 
  The lemma now follows from the explicit form of $T_{2\ell}$.
\end{proof}

Finally, for large  $m$, we show  that Pauling's estimate  \eqref{def:Pauling} approximates the exact residual entropy up to an error $O\bigl(\frac{\log m}{m}\bigr)$, thus confirming Conjecture \ref{Conj} for cycles of cliques.
Our new heuristic $\ours$ has a much smaller error,
again demonstrating its remarkable precision.

\begin{thm}\label{T:estimates:cycle-clique}
If $m$ is odd and $m\to \infty$ then
\begin{align*}
\eo(K_m \cart C_{\ell}) &= \dfrac{m+2}{2} \log 2  
- \dfrac{m-1}{2m} \log m   - \dfrac{\log \pi}{2}- \dfrac{3}{2m}
      + O\left(\dfrac{1}{m^2} + \dfrac{\log m}{m \ell}\right) 
\\
&=\paul(K_m \cart C_{\ell}) + \dfrac{\log m  }{2m}   -\dfrac{3}{4m} + O\left(\dfrac{1}{m^2} + \dfrac{\log m}{m \ell}\right) 
\\&= \ours(K_m \cart C_\ell) + O\left(\dfrac{1}{m^2} + \dfrac{\log m}{m \ell}\right),
\end{align*}
where $\paul$  and $\ours$ are defined in \eqref{def:Pauling}  and \eqref{ours}.
\end{thm}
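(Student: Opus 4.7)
The plan is to derive all three equalities from explicit asymptotic expansions in powers of $1/m$, starting from Theorem~\ref{T:cycle-clique} which already supplies the exact leading behaviour up to an $O(\log m/(m\ell))$ error.

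First, to prove the top line, I would substitute $d=m+1$ into the asymptotic~\eqref{eq:RTvalue} and combine it with the Stirling expansion $\log\binom{2k}{k} = 2k\log 2 - \tfrac12\log(\pi k) - \tfrac{1}{8k} + O(k^{-2})$ applied with $2k=m+1$. Subtracting, dividing by $m$, and expanding $\log(m+1)$ and $\log(m+2)$ around $\log m$, the $\log 2$ contributions collapse to $(m+2)/2$, the $\log \pi$ contribution to $-1/2$, the $\log m$ contributions to $-(m-1)/(2m)$, and the residual constants to $-3/(2m)$, yielding the first line.

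For the second equality, I would use the same Stirling expansion in $\paul(K_m\cart C_\ell) = \log\binom{m+1}{(m+1)/2} - \frac{m+1}{2}\log 2$ to obtain $\paul = \frac{m+2}{2}\log 2 - \frac{\log\pi}{2} - \frac{\log m}{2} - \frac{3}{4m} + O(m^{-2})$. Subtracting this from the first line and using the identity $\frac{\log m}{2} - \frac{m-1}{2m}\log m = \frac{\log m}{2m}$ produces the claimed $\frac{\log m}{2m} - \frac{3}{4m}$ gap modulo the advertised error.

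For the third equality, I would expand $\tau_{m+1} = m\log m - \frac{m-1}{2}\log(m^2-1)$ using $\log(m^2-1) = 2\log m - m^{-2} + O(m^{-4})$ to obtain $\tau_{m+1} = \log m + \frac{1}{2m} + O(m^{-2})$, and read $\tau(K_m\cart C_\ell) = \frac{\log(\ell/m)}{\ell m} + \log m - \frac{\log m - 2}{m} + O(m^{-2})$ off the second asymptotic of Lemma~\ref{L:tree-cycle-clique}. Combining these gives $\frac{1}{2}\tau_{m+1} - \frac{1}{2}\tau(K_m\cart C_\ell) = \frac{\log m}{2m} - \frac{3}{4m} + O(m^{-2}) + O(\log(\ell/m)/(\ell m))$, so $\ours - \paul$ reproduces precisely the $\eo - \paul$ discrepancy computed above, and $\eo - \ours$ collapses to the claimed error. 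The main obstacle is purely bookkeeping: each equality relies on several $1/m$ and $\log m/m$ contributions cancelling correctly, so it is essential to carry Stirling to the $1/(8k)$ term and to keep $\log(m+1)$, $\log(m+2)$, and $\log m$ distinguished until the end. No conceptual input beyond \eqref{eq:RTvalue}, Stirling, Theorem~\ref{T:cycle-clique}, and Lemma~\ref{L:tree-cycle-clique} is required.
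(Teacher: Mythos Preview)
Your proposal is correct and follows essentially the same route as the paper: the paper invokes \eqref{eq:RTvalue}, the Stirling expansion of $\binom{m+1}{(m+1)/2}$, Theorem~\ref{T:cycle-clique}, the definitions of $\paul$ and $\ours$, and the second asymptotic in Lemma~\ref{L:tree-cycle-clique}, calling the rest ``routine asymptotic expansions''---exactly the computations you outline. One small point worth noting is that the residual term $\log(\ell/m)/(\ell m)$ you carry from Lemma~\ref{L:tree-cycle-clique} is indeed $O(m^{-2}+\log m/(m\ell))$, by splitting into the cases $\ell\le m$ and $\ell>m$ (in the latter, $\log(\ell/m)/(\ell/m)\le 1/e$); otherwise your bookkeeping is complete.
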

\begin{proof}
The value of $\RT(m+2)$ follows from~\eqref{eq:RTvalue}.
We also have that 
\[
\binom{m+1}{(m+1)/2} 
= \biggl( 1-\Dfrac{1}{4m} + O(m^{-2}) \biggr) \Dfrac{2^{m+1}}{\sqrt{\pi(m+1)/2}}.
\]
Using Theorem \ref{T:cycle-clique} and routine asymptotic expansions, we obtain the first claim. The other two equalities follow from \eqref{def:Pauling}, \eqref{ours} and the second part of Lemma~\ref{L:tree-cycle-clique}.
\end{proof}


\nicebreak
\section{Other examples}\label{s:others}


\subsection{Triangular lattice $T_{n}$ and Baxter's constant}

For the triangular lattice $T_{n}$ of degree 6 on $n$ vertices,
with periodic boundary conditions,
Baxter~\cite{baxter1969f} proved in 1969 that 
\[
   \lim_{n\to\infty}\eo(T_n) = \log\dfrac{3 \sqrt 3}{2} \approx 0.95477.
\]
This compares poorly with Pauling's estimate 
$ \paul(T_n) = \log \frac52 \approx 0.91629$.
From \cite{glasser2005entropy}, we know that
the spanning tree entropy is
\[
 \lim_{n\to\infty}\tau(T_{n})  
= \Dfrac{5}{\pi} \sum_{i \ge 1} \Dfrac{ \sin(i \pi /3 )}{i^{2}}
\approx 1.61530,
\]
so our estimate \eqref{ours} gives 
\[
\ours(T_{n}) 
= 
\paul(T_n)
+ \dfrac{1}{2} \tau_6
- \dfrac{1}{2} \tau(T_{n}) 
\approx
0.95417,
\]
which is very close to the correct value.

\subsection{3-dimensional ice}\label{S:3Dice}

Of the several regular structures of water ice, we consider hexagonal ice (Ih) and cubic ice~(Ic).
Using a heuristic series expansion, Nagle~\cite{nagle1966} judged both $\eo(\mathrm{Ih})$ and $\eo(\mathrm{Ic})$ to lie in the interval [0.40992,0.41012].
By extrapolating finite simulations to the limit, Kolafa \cite{Kolafa2014} obtained the slightly higher value $\eo\approx 0.41043$ for both types of ice. Neither Nagle's nor Kolafa's calculations are sufficient to positively distinguish $\eo(\mathrm{Ih})$ from $\eo(\mathrm{Ic})$.

Using the method of Lyons~\cite{lyons2005} we find that the
spanning tree entropy of ice Ic is
\begin{align*}
 \tau(\mathrm{Ic}) &= \Dfrac18\int_0^1\!\!\int_0^1\!\!\int_0^1
    \log\bigl( 16464 - 3136(c_x+c_y+c_z) - 2016(c_xc_y+c_xc_z+c_yc_z) \\[-1ex]
   &\kern9em - 960c_xc_yc_z + 16(c_x^2c_y^2+c_x^2c_z^2+c_y^2c_z^2) \\
   &\kern9em -32(c_x^2c_yc_z +c_xc_y^2c_z +c_xc_yc_z^2)\bigr)
   \,dx\,dy\,dz \\
   &\approx 1.20645995,
\end{align*}
where $c_u$ means $\cos(2\pi u)$.
Nagle~\cite{nagle1966} noticed that the generating function for walks returning to the origin is the
same for both types of ice, implying that the eigenvalue distributions are the same and thus
$\tau(\mathrm{Ih})=\tau(\mathrm{Ic})$, which we verified to high precision.
Consequently, we have
\[
   \ours(\mathrm{Ih}) = \ours(\mathrm{Ic}) \approx 0.410433,
\]
in excellent agreement with Kolafa's estimate.

\subsection{Hypercubes $Q_{d}$}

The number of Eulerian orientations of a
$d$-dimensional hypercube $Q_{d}$ on $n = 2^{d}$ vertices
is only known up to $d=6$~\cite[sequence A358177]{OEIS}, and it appears that even the asymptotic value is unknown.

\begin{table}[ht!]
 \centering
 \begin{tabular}{|c|c|c|c|c|c|c|}
 \hline
   \vrule width0pt height2ex depth1ex
  $d$ & $n$ & $\paul$
    & $\ours$ & $\eo$ & $\paul-\eo$ & $\ours-\eo$\\
  \hline \hline
  4 & 16 & 0.405465 & 0.464780 & 0.499770 & -0.0943 & -0.035 \\
  6 & 64 & 0.916291 & 0.948381 & 0.955050 & -0.0388 & -0.0067 \\
  8 & 256 & 1.475907 & 1.489316 & 1.490759 & -0.0149 & -0.0014 \\
  10 & 1024 & 2.063693 & 2.069225 & 2.069554 & -0.0059 & -0.00033 \\
  12 & 4096 & 2.669829 & 2.672343 & 2.672420 & -0.0026 & -0.00008 \\
  14 & 16384 & 3.288868 & 3.290206 & 3.290224 & -0.0014 & -0.00002 \\
  \hline
 \end{tabular}
 \caption{Parameters for hypercubes $Q_d$\label{tab:hypercube}}
\end{table}

Using the method described in Section~\ref{s:numeric}, we have
computed estimates of $\eo(Q_d)$ up to $d=14$. From~\cite{bernardi2012spanning}, we  know that
\[
t( Q_{d} ) 
= \dfrac{1}{n} \prod_{i=1}^{d}\, (2i)_{}^{\binom{d}{i}}.
\]

The values shown for $\eo(Q_d)$ in Table~\ref{tab:hypercube}
are believed correct to within one value of the final digit.
It is seen that Pauling's estimate $\paul(Q_d)$ 
is improving as the dimension increases,
but our estimate $\ours(Q_d)$ is approaching the right answer more quickly.
Experimentally, it seems likely that $\eo(Q_d)=\ours(Q_d)+O(2^{-d})$.


 \section{Concluding remarks}

We conclude with a short summary of  interesting problems on the residual entropy of graphs mentioned in this paper that remain open.
\begin{enumerate}\itemsep=0pt
\item[(a)] Prove Conjecture \ref{Conj}.
\item[(b)] Give a combinatorial explanation for the strong
  correlation between residual entropy and spanning tree entropy.
  Theorem~\ref{T:IMZ} gives an analytic explanation for denser
  graphs. A qualitative hint is that the presence of short cycles
  tends to reduce the spanning tree count~\cite{mckay1983spanning}
  (at least for sparse graphs) but, by Theorem~\ref{EOGvalue}, 
  tends to increase the number of Eulerian orientations.
\item[(c)] Determine whether $\eo(\mathrm{Ih})$ and $\eo(\mathrm{Ic})$ coincide.
\item[(d)] Find the asymptotics of $\eo(Q_d)$ for hypercubes.
\item[(e)] Investigate the analogue of $\ours(G)$ for irregular graphs.
The average number of spanning trees for a wide range of degree
sequences was determined in~\cite{GIKM}.
\end{enumerate}

\end{document}